\def\a{\mathfrak{a}}
\def\b{\mathfrak{b}}
\def\c{\mathfrak{c}}
\def\e{\mathfrak{e}}
\def\f{\mathfrak{f}}
\def\g{\mathfrak{g}}
\def\k{\mathfrak{k}}
\def\p{\mathfrak{p}}
\def\sl{\mathfrak{sl}}
\def\so{\mathfrak{so}}
\def\sp{\mathfrak{sp}}
\def\su{\mathfrak{su}}
\def\R{{\bf R}}
\def\N{{\bf N}}
\def\C{{\bf C}}
\def\F{{\bf F}}
\def\H{{\bf H}}
\def\SO{{\bf SO}}
\def\O{{\bf O}}
\def\SU{{\bf SU}}
\def\Sp{{\bf Sp}}
\def\SL{{\bf SL}}
\def\U{{\bf U}}
\def\Z{\hbox{$\mathcal{Z}$}}
\def\S{\mathcal{S}}
\def\diag{\mathop{\hbox{diag}}}
\def\Ad{\mathop{\hbox{Ad}}}
\def\ad{\mathop{\hbox{ad}}}
\def\to{\rightarrow}
\def\indic{\mathbbm{1}}
\def\DR{\Delta^{\R^d}}
\def\M{\mathcal{M}}
\newcommand{\eval}[2][\right]{\relax
  \ifx#1\right\relax \left.\fi#2#1\rvert}
\begin{document}

\newtheorem{theorem}{Theorem}[section]
\newtheorem{lemma}[theorem]{Lemma}
\newtheorem{definition}[theorem]{Definition}
\newtheorem{prop}[theorem]{Proposition}	
\newtheorem{cor}[theorem]{Corollary}
\newtheorem{example}[theorem]{Example}
\newtheorem{remark}[theorem]{Remark}

\def\a{\mathfrak{a}}
\def\b{\mathfrak{b}}
\def\c{\mathfrak{c}}
\def\e{\mathfrak{e}}
\def\f{\mathfrak{f}}
\def\g{\mathfrak{g}}
\def\k{\mathfrak{k}}
\def\p{\mathfrak{p}}
\def\sl{\mathfrak{sl}}
\def\so{\mathfrak{so}}
\def\sp{\mathfrak{sp}}
\def\su{\mathfrak{su}}

\def\R{{\bf R}}
\def\N{{\bf N}}
\def\C{{\bf C}}
\def\F{{\bf F}}
\def\H{{\bf H}}
\def\SO{{\bf SO}}
\def\O{{\bf O}}
\def\SU{{\bf SU}}
\def\Sp{{\bf Sp}}
\def\SL{{\bf SL}}
\def\U{{\bf U}}

\def\Z{\hbox{$\mathcal{Z}$}}
\def\S{\mathcal{S}}
\def\diag{\mathop{\hbox{diag}}}
\def\Ad{\mathop{\hbox{Ad}}}
\def\ad{\mathop{\hbox{ad}}}
\def\to{\rightarrow}
\def\indic{\mathbbm{1}}

\def\DR{\Delta^{\R^d}}

\def\M{\mathcal{M}}

\begin{center}
INTEGRAL KERNELS ON COMPLEX SYMMETRIC SPACES AND FOR THE DYSON BROWNIAN MOTION\\
P.{} Graczyk\footnote{LAREMA, UFR Sciences, Universit\'e d'Angers, 2 bd Lavoisier, 49045 Angers cedex 01, France,piotr.graczyk@univ-angers.fr} and P.{} Sawyer\footnote{Department of Mathematics and Computer Science, Laurentian University, Sudbury, Canada P3E 2C6, psawyer@laurentian.ca}
\end{center}

\bigskip
\noindent {\bf Key words} Poisson kernel, Newton kernel, heat kernel, spherical functions, Dunkl processes\\
\noindent {\bf MSC (2010)} 31B05, 31B25, 60J50, 53C35

\begin{abstract}
In this article, we consider flat and curved Riemannian symmetric spaces in the complex case and we study their basic integral kernels, in potential and spherical analysis: heat, Newton, Poisson kernels and spherical functions, \emph{i.e.} the kernel of the spherical Fourier transform. 
 
We introduce and exploit a simple new method of construction of these $W$-invariant kernels by alternating sum formulas.
We then use the alternating sum representation of these kernels to obtain their asymptotic behavior.
We apply our results to the Dyson Brownian Motion on $\R^d$.
\end{abstract}

\section*{Thanks}
The first author thanks Laurentian University of Sudbury  for its hospitality  and financial support during his visits to Sudbury.  The second author thanks LAREMA for its hospitality and the R\'egion Pays de la Loire for its financial support on several occasions via the projects Matpyl, G\'eanpyl and D\'efimaths.


\section{Introduction and notations}

Analysis on Riemannian symmetric spaces of Euclidean type, also called flat symmetric spaces,
continues to develop in recent years (\cite{PGPS1, Helgason1, Wolf, Xu}). Its importance is due to its relationship with Dunkl analysis (\cite{dJ,DuX, Roesler}) together with the correspondence of the complex case with the parameter $k=1$,
in which  symmetric spaces of Euclidean type constitute the ``geometric case'', frequently used as a model case in most challenging open problems of Dunkl theory. The analysis on flat complex symmetric spaces coincides with Weyl group invariant
Dunkl analysis associated with multiplicity $k = 1$, see \cite{dJ}. In particular, the heat
kernel ${p_t^W(X,Y )}$ is a special case of the heat kernel in the Weyl group invariant Dunkl
setting. We  employ this intimate connection to Dunkl theory in our paper in Section \ref{asymp}
as one of main tools of the proof of main theorems. This connection appears also in Proposition \ref{25}.

Another important aspect of this paper is to apply
analysis  on symmetric spaces of Euclidean type to potential theory and
to stochastic analysis of Dyson Brownian Motion, one of the most important models of non-colliding particles, see the recent survey \cite{Katori}. We expect further applications of our results and techniques
to other  non-intersecting stochastic path problems related to
root systems and to  multivariate stochastic processes related to 
Laplace-Beltrami operators on symmetric spaces, to Dunkl Laplacians
and to Schr\"odinger operators, see the discussion in Section \ref{DYSON}. We thank an anonymous referee for pointing out to us such 
further stochastic applications.

The objective of this paper is to study  basic integral kernels,
in potential theory and spherical analysis: heat, Newton, Poisson kernels, Green function and spherical functions (\emph{i.e.} the kernel of the spherical Fourier 
transform), in the set-up of flat and curved symmetric spaces of complex type.

Our main results on the exact form and asymptotics of the heat, Poisson and Newton kernels (Theorems \ref{kernels}, \ref{Poiss}, \ref{Newt} and Corollaries \ref{Dyson} and \ref{Poiss_Dyson}) are crucial  for the future development of the potential theory on flat and curved symmetric spaces of complex type, and for the potential theory of the Dyson Brownian Motion. These results are a starting point of research and a source of conjectures for the corresponding kernels in the Weyl-invariant Dunkl setting (for the rank one case, refer to \cite{Graczyk}).

The main result on asymptotics of the spherical functions contained in Theorem \ref{Final} is important 
from the point of view of spherical analysis on symmetric spaces, because it  generalizes significantly the results of Helgason in \cite{Helgason1}, of Narayanan, Pasquale and Pusti in \cite{Narayanan} and of Schapira in \cite{Schapira}, for the flat and curved symmetric spaces in the complex case, cf.{} Remark \ref{pub}. 

We recall now some basic terminology and facts about symmetric spaces associated to Cartan motion groups.

Let $G$ be a semisimple Lie group and let $\g=\k\oplus\p$ be the Cartan decomposition of $G$. 
We recall the definition of the Cartan motion group and the flat symmetric space associated with the semisimple Lie group $G$ with maximal 
compact subgroup $K$. 
The Cartan motion group is the semi-direct product  
5

$G_0=K\rtimes \p$ where the multiplication is defined by $(k_1,X_1)\cdot(k_2,X_2)=(k_1\,k_2,\Ad(k_1)(X_2)+X_1)$. The associated flat symmetric space is then $M=\p\simeq G_0/K$ (the action of $G_0$ on $\p$ is given by $(k,X)\cdot Y=\Ad(k)(Y)+X$).

We tacitly identify $K$-invariant measures, functions, differential operators on $M$ with $W$-invariant measures etc.{} on $\a$.

The spherical functions for the symmetric space $M$ are then given by
\begin{equation}\label{psi}
\psi_\lambda(X)=\int_K\,e^{\lambda(\Ad(k)(X))}\,dk
\end{equation}
where $\lambda$ is a complex linear functional on $\a\subset \p$, a Cartan subalgebra of the Lie algebra of $G$. To extend $\lambda$ to $X\in\Ad(K)\a=\p$, one uses $\lambda(X)=\lambda(\pi_\a(X))$ where $\pi_\a$ is the orthogonal projection with respect to the Killing form (denoted throughout this paper by 
$\langle \cdot,\cdot\rangle$).
Note also that the spherical function for the symmetric space $G/K$ is given by
\begin{equation}\label{phi}
\phi_\lambda(g)=\int_K\,e^{(\lambda-\rho)(H(g\,k))}\,dk
\end{equation} 
where $\lambda$ is a complex linear functional on $\a$ and the map $H$ is defined via the Iwasawa decomposition 
of $G$, namely $g=k\,e^{H(g)}\,n\in K\,A\,N$ and $\rho=(1/2)\,\sum_{\alpha>0}\,m_\alpha\,\alpha$. 
Note that in \cite{Helgason1, Helgason2, Helgason3}, $\lambda$ is replaced by $i\,\lambda$.

Throughout this paper, we suppose that $G$ is a semisimple complex Lie group. The complex root systems are respectively 
$A_{n-1}$ for $n\geq 2$ (where $\p$ consists 
of the $n\times n$ hermitian matrices with trace 0), $B_n$ for $n\geq 2$ (where $\p=i\,\so(2\,n+1)$), $C_n$ for $n\geq 3$ (where $\p=i\,\sp(n)$) 
and $D_n$ for $n\geq 4$ (where $\p=i\,\so(2\,n)$) for the classical cases and the exceptional root systems $E_6$, $E_7$, $E_8$, $F_4$ and $G_2$.

Let $\Delta$ be the Laplace-Beltrami operator on $M$ and $\Delta^W$ its restriction to
$W$-invariant functions on $\a$ {where $W$ is the corresponding Weyl group}. Recall the formula 
\begin{equation}\label{operator}
\Delta^W f= \pi^{-1}\, \Delta^{\R^d} (\pi\, f),
\end{equation}
where $\pi(X)=\prod_{\alpha>0} \,\alpha(X)$ (see \cite[Chap.{} II, Theorem 5.37]{Helgason3}) in the Euclidean case.

In Section \ref{IntroKernel}, we introduce and exploit a simple new method of construction of important $K$-invariant kernels on the space $M$.

We show in Theorem \ref{kernels} that if ${\mathcal K}(X,Y)$ is
 an Euclidean kernel  (heat kernel, potential kernel, Poisson kernel,  \dots) for the Laplacian $\Delta^{\R^d}$, then  the
corresponding kernel  acting on $W$-invariant functions on $M$ is given by the alternating sum
\begin{equation}\label{alter}
{\mathcal K}^W(X,Y)=\frac{1}{|W|\,\pi(X)\,\pi(Y)} \,\sum_{w\in W}\,{\epsilon(w)} {\mathcal K}(X,w\cdot Y).
\end{equation}

Here and in Theorem \ref{kernels} below, ${\mathcal K}(X,Y)$ is an Euclidean kernel on the Cartan subalgebra $\a$ which is isomorphic to $\R^d$ where $d$ is the dimension of $\a$ and with the underlying scalar product being the Killing form on $\a$.

The proof of Theorem \ref{kernels} is short and easy and    uses the simple form of the operator $\Delta^W$ given in \eqref{operator}.

It is  well-known that the spherical functions of the space can be written explicitly as such alternating sums (\cite[Chap.{} IV, Proposition 4.10]{Helgason3}).

The alternating sum formulas \eqref{alter}
also include determinantal formulas for transition probabilities $p_t^W(x,y)$ (equivalently, for heat kernels) of Karlin-McGregor type, proven  for Dyson Brownian Motions in Weyl chambers \cite{Grabiner}
and exploited in stochastic analysis (refer to \cite{KT,Koenig}).

The fact that alternating sums formulas \eqref{alter} are  true for many further analytic and stochastic kernels beyond spherical functions and heat kernels, was surprisingly not  published  or exploited (we asked experts of the field for an existing reference).  

The approach with formulas \eqref{alter}  will allow us to provide asymptotics   for  kernels ${\mathcal K}^W$, using
 our knowledge  of the kernels  ${\mathcal K}(X,Y)$  on $\R^d$ as given in Table \ref{Ker}.

In Section \ref{asymp}, we discuss the asymptotic behaviour of the Poisson kernel especially when one or both arguments are singular. These results translate well to the Newton kernel.

In Section \ref{spherical}, we compute asymptotics for the spherical functions $\psi_\lambda(Y)$ which can prove challenging when either $\lambda$ or $Y$ are singular (\emph{i.e.} such that at least one of the nonzero root vanish on $X$ or $Y$). Our results depend on a property we call ``Killing-max'' namely the property that for $X$, $Y\in\overline{\a^+}$, 
$\langle X,w\cdot Y\rangle=\langle X, Y\rangle$ if and only if $w\in W_X\,W_Y$ where $W_X=\{w\in W\colon w\cdot X=X\}$. It is known that this property is verified when either $X$ or $Y$ is non singular \cite{Helgason1}. We prove in Appendix \ref{KM}, using the classification of Lie algebras, that the Killing-max holds in almost all cases (only in the cases related to the root systems $E_6$, $E_7$ and $E_8$ is the question left unanswered).

We conclude with Section \ref{DYSON} where we
apply the previous results to the heat kernel and Poisson and Newton kernels for the Dyson Brownian Motion. \\

{\bf Acknowledgements.}
We thank M{}. Denkowski for  advice with Lemma \ref{LLL} and  J.-J.{} Loeb for useful discussions.
 We are grateful to both anonymous referees for their insightful comments and remarks that greatly helped to improve the paper.

\section{Kernels on flat symmetric spaces in the complex case}\label{IntroKernel}

\subsection{Definitions}

We first recall the classical integral kernels on $\R^d$ in Table \ref{Ker}.

\begin{table}[h]
\begin{center}
{\small
\begin{tabular}{|l|l|l|}\hline
PDE&Kernel&Solution\\ \hline
$\displaystyle \left\lbrace\begin{array}{l}\DR\,u(X,t)=\frac{\partial~}{\partial t}\,u(X,t)\\
			\lim_{t\to0^+}\,u(X,t)=f(X)\end{array}\right.$
			&$\displaystyle p_t(X,Y)=\frac{e^{-\frac{|X-Y|^2}{4\,t}}}{(4\,\pi\,t)^{d/2}}$
			&$\displaystyle u(X,t)=\int_{\R^d}\,f(Y)\,p_t(X,Y)\,dY$\\
			$X\in\R^d$, $t>0$&&\\\hline
			$\displaystyle \left\lbrace\begin{array}{ll}\DR\,u(X)=f(X)&\hbox{on $\R^d$,}\\\hbox{$|u(X)|\to0$ as $X\to\infty$}\end{array}\right.$\\$f\in C_c(\R^d)$
			&$\displaystyle N(X,Y)=\Phi(X-Y),$
			&$\displaystyle u(X)=\int_{\R^d}\,f(Y)\,N(X,Y)\,dY$\\ \hline
			$\displaystyle \left\lbrace\begin{array}{ll}\DR\,u(X)=0&\hbox{on $B(X_0,r)$}\\u(X)=f(X)&\hbox{on $\partial B(X_0,r)$}\end{array}\right.$
			&$\displaystyle P(X,Y)=\frac{r^2-|X-X_0|^2}{w_d\,r\,|X-Y|^d}$
			&$\displaystyle u(X)=\int_{\partial B(X_0,r)}\,f(Y)\,P(X,Y)\,dY$\\\hline
			$\displaystyle \left\lbrace\begin{array}{ll}\DR\,u(X)=f(X)&\hbox{on $B=B(0,1)$}\\ u(X)=0&\hbox{on $\partial B$}\end{array}\right.$
			&$\displaystyle G_B(X,Y)=\Phi(X-Y)$
			&$\displaystyle u(X)=\int_B\,f(Y)\,G_B(X,Y)\,dY$\\
			&$\displaystyle -\Phi(|X|\,(Y-X/|X|^2))$&\\\hline
		\end{tabular}
		}
	\end{center}
	where $w_d=2\,\pi^{d/2}/\Gamma(d/2)$ (the surface area of a sphere of radius 1 in $\R^d$) and
	$\displaystyle
	\Phi(X)=\left\lbrace\begin{array}{ll}
	\frac{1}{2\,\pi}\,\ln |X|&\hbox{if $d=2$}\\
	\frac{1}{(2-d)\,w_d}\,|X|^{2-d}&\hbox{if $d\geq 3$}
	\end{array}\right..$
	\caption{The heat kernel $p_t$, the Newton kernel $N$, the Poisson kernel $P$ and the Green kernel $G_B$ for $\R^d$ \label{Ker}}
\end{table}

The integral kernels on the flat symmetric space $M$ are considered with respect to the invariant measure $\mu(dY)=\pi^2(Y)\,dY$ on $M$.
 Their definition is analogous to the classical $\R^d$ and Riemannian manifold case, with the $W$-invariance imposed on the operator,
boundary problem and solutions. The Dunkl-Poisson, Newton and Green kernels and their $W$-invariant versions
 were introduced and studied in \cite{Gallardo2} and \cite{Graczyk}.

\begin{definition}
We define a kernel  ${\mathcal K}^W(X,Y) $ for the operator $\Delta^W$ and a boundary
problem ${\mathcal P}$  as the fundamental solution of this PDE problem, which is $W$-invariant in $X$-variable, for each $Y$.
Equivalently,  ${\mathcal K}^W(X,Y) $ is  an  integral reproducing kernel for the $W$-invariant solutions of the  problem ${\mathcal P}$
and this kernel is $W$-invariant in $X$.
\end{definition}

The uniqueness of ${\mathcal K}^W(X,Y)$ may be deduced, as in the classical case, from the uniqueness of the spherical Fourier transform.   Another approach for the existence of Poisson, Newton and Green kernels  is available from the point of view of stochastic diffusion processes \cite{Chung}. Note that $W$-invariant Dunkl processes are diffusions.

\subsection{The method of alternating sums for constructing kernels on $M$}\label{flatFormulas}

This method will be introduced and used in the proof of Theorem \ref{kernels} below.


\begin{theorem}\label{kernels}
Let $M$ be a symmetric space of Euclidean type with $G$ a complex simple Lie group of rank $d$. Then the following formulas hold 
for $X$, $Y\in\a$, a Cartan subalgebra associated with $M$.
\begin{enumerate}

\item The heat kernel on $M$ is given by
\begin{align}\label{HeatComplexX}
p_t^W(X,Y)
&=\frac{1}{|W|\,(4\,\pi\, t)^{d/2}\,\pi(X)\,\pi(Y)}
\,\sum_{w\in W}\,{\epsilon(w)}{e^{-{\frac{|X-w\cdot Y|^{2}}{4\,t}}}}
\end{align}

\item The Newton kernel on $M$ is given by
\begin{align}\label{NewtonComplexX}
N^W(X,Y)&=\frac{1}{2\,\pi\,|W|\,\pi(X)\,\pi(Y)}\,\sum_{w\in W}\,\epsilon(w)\,\ln|X-w\cdot Y|~~ \hbox{when $d=2$},\\
N^W(X,Y)&=\frac{1}{|W|\,(2-d)\,w_d\,\pi(X)\,\pi(Y)}
\,\sum_{w\in W}\,\frac{\epsilon(w)}{|X-w\cdot Y|^{d-2}}~~ \hbox{when $d\geq3$}.\nonumber
\end{align}
	
\item The Poisson kernel of the open unit ball $B$ is given for $X\in B$ and $Y\in\partial B$ by
\begin{align}\label{PoissonComplexX}
P^W(X,Y) &=\frac{1-|X|^2}{|W|\,w_d\,\pi(X)\,\pi(Y)} \,\sum_{w\in W}\,\frac{\epsilon(w)}{|X-w\cdot Y|^{d}}
\end{align}
	
\item The Green function of the unit ball is given by
\begin{align}\label{GreenComplexX}
G^W_B(X,Y) &=\frac{1}{|W|\,\pi(X)\,\pi(Y)} \,\sum_{w\in W}\,\epsilon(w) G_B(X,Y),
\end{align}
where $G_B(X,Y)$ is the classical Green function of the unit ball $B$ in $\R^d$ (refer to Table \ref{Ker}). 
\end{enumerate}
\end{theorem}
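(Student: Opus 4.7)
The unifying device is the intertwining relation \eqref{operator}, $\Delta^W f = \pi^{-1}\Delta^{\R^d}(\pi f)$, valid for $W$-invariant $f$. The plan is to show that the candidate alternating-sum kernel $\mathcal{K}^W$, once multiplied through by $\pi(X)$, reproduces the Euclidean kernel $\mathcal{K}$ acting on the $W$-antisymmetric data $f\pi$; the four Euclidean PDE identities (heat, Newton, Poisson, Green) then transfer to their $W$-invariant counterparts via \eqref{operator}.

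First, a direct check shows that $\mathcal{K}^W(X,Y)$ defined by \eqref{alter} is $W$-invariant separately in $X$ and in $Y$: this follows from $\pi(wX) = \epsilon(w)\pi(X)$ together with the orthogonal invariance $\mathcal{K}(wX, wY) = \mathcal{K}(X,Y)$ enjoyed by all four Euclidean kernels of Table \ref{Ker}. For the main computation, fix a $W$-invariant test function $f$ on the relevant domain ($\a$ for the heat and Newton cases, $\partial B$ for Poisson, $B$ for Green) and substitute $Y \mapsto w^{-1}Y$ in the term indexed by $w$ of $\sum_w \epsilon(w)\,\mathcal{K}(X,wY)$. Since $f(w^{-1}Y) = f(Y)$, $\pi(w^{-1}Y) = \epsilon(w)\pi(Y)$, and $w$ preserves Lebesgue measure, the two signs $\epsilon(w)$ multiply to $1$, every summand becomes the same integral, and the outer factor $|W|$ cancels. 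This yields the key identity
\begin{equation*}
\pi(X)\int f(Y)\,\mathcal{K}^W(X,Y)\,\pi^2(Y)\,dY \;=\; \int f(Y)\,\pi(Y)\,\mathcal{K}(X,Y)\,dY,
\end{equation*}
with $dY$ replaced by $d\sigma(Y)$ for the Poisson kernel.

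Applying $\Delta^{\R^d}_X$ to both sides and invoking the defining properties from Table \ref{Ker}, the right-hand side equals $(f\pi)(X)$ in the Newton and Green cases, is differentiated in $t$ in the heat case, and vanishes on $B$ in the Poisson case. Using \eqref{operator} and dividing by $\pi(X)$ then shows that $u(X) := \int f(Y)\,\mathcal{K}^W(X,Y)\,\pi^2(Y)\,dY$ solves the corresponding $W$-invariant problem: $\Delta^W u = f$, $\Delta^W u = \partial_t u$, $\Delta^W u = 0$ on $B$, or the Green identity on $B$. The initial condition for the heat kernel follows from Gaussian concentration of $p_t$ as $t \to 0^+$, so that $\pi u \to f\pi$ pointwise and hence $u \to f$; the Poisson boundary values and the vanishing of the Green function on $\partial B$ transport analogously from their Euclidean counterparts.

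The main (mild) obstacle is the apparent singularity of $\mathcal{K}^W$ along the walls $\{\pi(X)\pi(Y)=0\}$: the alternating sum $\sum_w \epsilon(w)\,\mathcal{K}(X,wY)$ is $W$-antisymmetric in each variable and therefore vanishes to first order along every hyperplane $\{\alpha(X)=0\}$ and $\{\alpha(Y)=0\}$, so division by $\pi(X)\pi(Y)$ produces a smooth $W$-invariant function that extends the preceding argument across the walls. Uniqueness of the $W$-invariant reproducing kernel, recorded after the Definition and derived from the spherical Fourier transform, then identifies our alternating sum with the true kernel and completes the proof.
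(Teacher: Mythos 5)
Your strategy is the same as the paper's: the intertwining relation \eqref{operator} turns the $W$-invariant problem for $u$ into the Euclidean problem for $\pi u$, and the change of variables $Y\mapsto w^{-1}\cdot Y$ together with $\pi(w\cdot Y)=\epsilon(w)\pi(Y)$ symmetrizes the resulting reproducing identity into the alternating sum \eqref{alter}. For the heat, Poisson and Green cases your verification is sound and matches the paper's (which carries out the computation explicitly only for the Poisson kernel and declares the others similar).

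There is, however, a genuine gap in the Newton case, and it is precisely the point the paper singles out as ``requiring more care.'' The Newton problem in Table \ref{Ker} includes the side condition $|u(X)|\to 0$ as $X\to\infty$; without it, solutions of $\Delta^W u=f$ are not unique, so your final appeal to uniqueness of the kernel is not available until this condition is verified for the candidate $u(X)=\hat u(X)/\pi(X)$ with $\hat u(X)=\sum_{w}\epsilon(w)\tilde u(w\cdot X)$. The difficulty is that $X\to\infty$ does not force $\pi(X)\to\infty$: along sequences approaching the walls, $\pi(X)$ stays bounded or vanishes while $|X|\to\infty$, so the fact that $\hat u(X)\to 0$ does not by itself control the quotient. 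Your remark about the walls addresses only the local smoothness of $\mathcal{K}^W$ (antisymmetry lets $\pi(X)\pi(Y)$ be divided out on compact sets); it says nothing about the joint limit ``$X$ large and close to a wall.'' The paper resolves this with a separate argument: a covering of a neighbourhood of infinity by the sets $A_J$, an inversion $x_i\mapsto 1/x_i$ in the unbounded coordinates, and the Weierstrass-division Lemma \ref{LLL} to show that the skew-symmetric numerator factors analytically through $\pi$ in these charts, whence the quotient is bounded and tends to $0$. Some argument of this kind must be supplied before the Newton formula \eqref{NewtonComplexX} is established.
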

\begin{proof}
It is based on the following steps:
\begin{enumerate}
\item Write a kernel on $\R^d$ where $d$ is the rank of $M$; 
\item Exploit formula \eqref{operator};
\item Apply the $W$-invariance (the kernels on $M$ must be $W$-invariant).
\end{enumerate}

We give the proof in the Poisson kernel case; the other proofs are similar. The Poisson kernel of $B(0,1)$ in the Euclidean case is
\begin{align*}
P(X,Y) &=\frac{1-|X|^2}{w_d\,|X-Y|^{d}}.
\end{align*}
	
If $u$ is harmonic with respect to $\Delta^W$ then $\pi\,u$ is harmonic with respect to $\Delta^{\R^d}$. Hence 
\begin{align*}
\pi(X)\,u(X)&=\int_{\partial B}\,\frac{1-|X|^2}{w_d\,|X- Y|^{d}}\,\pi(Y)\,f(Y)\,dY.
\end{align*}
	
This is equivalent to
\begin{align*}
u(X)&=\int_{\partial B}\,\frac{1-|X|^2}{w_d\,\pi(X)\,\pi(Y)}\,\frac{1}{|X- Y|^{d}}\,f(Y)\,\pi(Y)^2\,dY.
\end{align*}
	
The reproducing kernel $\frac{1-|X|^2}{w_d\,\pi(X)\,\pi(Y)}\,\frac{1}{|X- Y|^{d}}$ is not $W$-invariant. We write the last equation 
$|W|$ times, replacing $X$ by $w\cdot X$
\begin{align*}
u(X)&=u(w\cdot X)=\int_{\partial B}\,\frac{1-|X|^2}{w_d\,\pi(w\cdot X)\,\pi(Y)}\,\frac{1}{|w\cdot X- Y|^{d}}\,f(Y)\,\pi(Y)^2\,dY\\
&=
\int_{\partial B}\,\frac{1-|X|^2}{w_d\,\pi(X)\,\pi(Y)}\,\frac{\epsilon(w)}{|X- w\cdot Y|^{d}}\,f(Y)\,\pi(Y)^2\,dY
\end{align*}
and we	 sum up the $|W|$ equations. We obtain
\begin{align*}
u(X)=\frac{1}{|W|\,w_d}\,\int_{\partial B}\,\frac{1-|X|^2}{\pi(X)\,\pi(Y)}\,\sum_{w\in W}\,\frac{\epsilon(w)}{|X- w\cdot Y|^{d}}\,f(Y)\,\pi(Y)^2\,dY.
\end{align*}

The formula for the Newton kernel requires more care. Let $\tilde{u}$ be the solution of the inhomogeneous Laplace equation on 
$\R^d$, then 
\begin{align*}
u(X)=\frac{\sum_{w\in W}\,\epsilon(w)\,\tilde{u}(w X)}{\pi(X)}
\end{align*}
solves the corresponding problem for $\Delta^W$.  We need however to show that $\lim_{X\to\infty}\,|u(X)|=0$.   It is useful to note that the function $\hat{u}(X)=\sum_{w\in W}\,\epsilon(w)\,\tilde{u}(w X)$ is skew-symmetric.

For $J\subseteq \{1,2,\dots,n\}$, les $A_J=\{x\in\R^n\colon \hbox{$|x_i|>1/2 $ for $i\in J$, $|x_i|<1 $ for $i\in J^c$}\}$.
Note that $\R^d$ is the union of the open sets $A_J$.
Now, on $A_J$ with $|J|\geq 1$ (so that $X\to\infty$),
\begin{align*}
\lefteqn{\lim_{X\to\infty}\,\left|\frac{\hat{u}(X)}{\pi(x_1,\dots,x_d)}\right|
=\lim_{(x_i)_{i\in J}\to\infty}\,\left|\frac{\hat{u}(X)}{\pi(x_1,\dots,x_d)}\right|
=\lim_{\exists i\in J,x_i\to0}\,\left|\frac{\hat{u}((1/x_i)_{i\in J},(x_i)_{i\in J^c})}{\pi((1/x_i)_{i\in J},(x_i)_{i\in J^c})}\right|}\\
&=\lim_{|x_i|<2, j\in J,|x_i|>1,i\in J^c,\exists i\in J,x_i\to0}\,\prod_{i\in J}\,|x_i|^{d-1}
\,\left|\frac{\hat{u}((1/x_i)_{i\in J},(x_i)_{i\in J^c})}{\pi((x_i)_{i\in J})\,\pi((x_i)_{i\in J^c})\,\prod_{i\in J,j\in J^c}(1-x_i\,x_j)}\right|.
\end{align*}

Observe that $\hat{u}((1/x_i)_{i\in J},(x_i)_{i\in J^c})$ is continuous since $\lim_{X\to\infty} \tilde{u}(X)=0$ and skew-symmetric in $(x_i)_{i\in J}$ and in $(x_i)_{i\in J^c}$.  We remark also that it is zero when $1-x_i\,x_j=0$, $i\in J$, $j\in J^c$.  Using 
{Lemma \ref{LLL} below, we can conclude that the term
\begin{align*}
\left|\frac{\hat{u}((1/x_i)_{i\in J},(x_i)_{i\in J^c})}{\pi((x_i)_{i\in J})\,\pi((x_i)_{i\in J^c})\,\prod_{i\in J,j\in J^c}(1-x_i\,x_j)}\right|
\end{align*}
is an analytic function so that} it remains bounded and that the limit is 0.
We are grateful to the anonymous referee for pointing out the need for additional justification in the Newton kernel case.
\end{proof}

\begin{lemma}\label{LLL}
Let $f:\R^2 \to \R$, $h:\R^2 \to \R$ two analytical functions such that $h^{-1}(\{0\}) \subset f^{-1}(\{0\})$. 
Suppose that for each $z_0=
(x_0,y_0) \in h^{-1}(\{0\})$, 
the order of $y_0$ as a zero of 
$h(x_0,\cdot)$ is one (i.e. $h(x_0,y)=(y-y_0)\tilde h(y), \tilde h$ analytical, $\tilde h(y_0)\not=0$).

Then there exists an analytical function $g:\R^2 \to \R$ such that $f=hg$.
\end{lemma}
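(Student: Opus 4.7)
The plan is to define $g = f/h$ where $h$ is nonzero and show it extends analytically across the zero set $Z = h^{-1}(\{0\})$. On the open set $U = \R^2 \setminus Z$ the function $g = f/h$ is manifestly analytic. Since $U$ is dense (by the hypothesis, $Z$ is locally a graph, hence nowhere dense), once we construct an analytic extension in a neighborhood of each point of $Z$, uniqueness of analytic continuation will glue these local extensions to a global analytic $g$ with $f = hg$.

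To produce the local extension at a point $z_0 = (x_0, y_0) \in Z$, I would use the hypothesis that $y_0$ is a simple zero of $h(x_0, \cdot)$. This means $\partial_y h(z_0) = \tilde h(y_0) \neq 0$, so by the analytic implicit function theorem there is a neighborhood $V$ of $z_0$ and an analytic function $\phi$ defined near $x_0$ such that $Z \cap V = \{(x,y) \in V : y = \phi(x)\}$. By the Weierstrass preparation theorem (or directly, by expanding and factoring), on possibly shrinking $V$ we may write
\begin{equation*}
h(x,y) = (y - \phi(x))\, u(x,y)
\end{equation*}
with $u$ analytic and nowhere zero on $V$.

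Next, since $f$ vanishes on $Z \supseteq \{y=\phi(x)\} \cap V$, I claim $f$ is also divisible by $y - \phi(x)$ in the ring of analytic functions on $V$. Indeed, expanding $f$ in a Taylor series in $y$ about $\phi(x)$,
\begin{equation*}
f(x,y) = \sum_{k \ge 0} \frac{1}{k!}\, \partial_y^k f(x,\phi(x))\, (y-\phi(x))^k,
\end{equation*}
and the $k=0$ term vanishes because $f(x,\phi(x)) \equiv 0$ near $x_0$. The series converges to an analytic function of the form $(y-\phi(x))\, v(x,y)$ with $v$ analytic on $V$ (equivalently, this is the Weierstrass division of $f$ by the distinguished polynomial $y-\phi(x)$). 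Therefore on $V$,
\begin{equation*}
\frac{f(x,y)}{h(x,y)} = \frac{v(x,y)}{u(x,y)},
\end{equation*}
and the right-hand side is analytic on $V$ since $u$ does not vanish there. This defines the desired analytic extension near $z_0$.

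Finally I would assemble the global $g$: on each $V$ as above we have an analytic function agreeing with $f/h$ on the dense open set $V \setminus Z$, so any two such local extensions agree on their overlap by the identity principle. Together with $g = f/h$ on $U$, they define a single analytic function $g: \R^2 \to \R$ satisfying $f = hg$ on $U$ and hence, by continuity, everywhere. The main subtlety I anticipate is ensuring that every point of $Z$ is covered by the implicit-function construction, but the hypothesis of simple zeros in $y$ at \emph{every} point of $Z$ is exactly what guarantees $\partial_y h \neq 0$ along $Z$, so the construction applies uniformly.
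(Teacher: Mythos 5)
Your proof is correct and follows essentially the same route as the paper: a local Weierstrass-type division at order one in the $y$-variable, using the vanishing of $f$ on $h^{-1}(\{0\})$ to kill the remainder, followed by gluing via the identity principle. The only cosmetic difference is that the paper divides $f$ by $h$ directly (obtaining a remainder $b_1(x)$ that vanishes on the zero set), whereas you first prepare $h=(y-\phi(x))u$ via the implicit function theorem and divide $f$ by the linear factor; the two are equivalent.
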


\begin{proof}
We apply the Weierstrass division theorem (\cite[Th. 0.43(2)]{Denk},
\cite[Th.6.1.3(1)]{Krantz}).
For each $z_0=
(x_0,y_0) \in h^{-1}(\{0\})$
there exists a neighbourhood 
$V_{z_0}$ and analytical functions
$v_{z_0}(x,y)$ and $b_1(x)$ such that
$$ f(x,y)=h(x,y)v_{z_0}(x,y) + b_1(x).
$$
For all $(x,y)\in h^{-1}(0)\cap V_{z_0}$ the last equality gives
$0=0+b_1(x)$,  so that
$$
f(x,y)=h(x,y)v_{z_0}(x,y),
\qquad (x,y)\in V_{z_0}.
$$
An application of the principle of identity ends the proof.
\end{proof}

\begin{remark}
The properties of factorization of analytical functions of several real variables 
are not as straightforward as one might hope. For example, consider
$f(x,y)=y^3$ which is zero whenever $x^2+y^2=0$.  
However, it is not true that $f$ divided by $x^2+y^2$ is analytic or even defined. 
\end{remark}

For the root systems of type $A$, we obtain the following determinantal formula for the heat kernel on $M$. 
This formula may be also deduced from the formula for the transition function of the Dyson Brownian Motion, based on the Doob transform and Karlin-MacGregor formula, see Section \ref{DYSON}.

\begin{cor}\label{DetAd}
Consider the flat complex symmetric space $M$ with the root system $\Sigma=A_{d-1}$. Let $g_t(u,v)=\frac{1}{\sqrt{4\,\pi \,t}}\,e^{-|u-v|^2/4t}$ be the 1-dimensional classical heat kernel.
The heat kernel on 
$M$ is given by
\begin{align}\label{detAd}
p_t^W(X,Y)=\frac{1}{|W|\,\pi(X)\,\pi(Y)} \,\det\left(g_t(x_i,y_j)\right)
\end{align}
where $x_1$, \dots, $x_d$ are the coordinates of $X$ and $y_1$, \dots, $y_d$ are the coordinates of $Y$.
\end{cor}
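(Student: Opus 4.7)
The plan is to derive this determinantal expression directly from the alternating sum formula for the heat kernel given in Theorem \ref{kernels}, by specializing to the Weyl group of type $A_{d-1}$, which is the symmetric group $S_d$ acting on $\R^d$ by coordinate permutations with sign character $\epsilon(w) = \mathrm{sgn}(w)$.

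First, I would write coordinates $X = (x_1,\dots,x_d)$ and $Y = (y_1,\dots,y_d)$ so that for $w \in S_d$, the action $w \cdot Y$ is just a permutation of the components of $Y$. The key observation is that the squared Euclidean norm splits as a sum over coordinates,
\begin{equation*}
|X - w \cdot Y|^{2} \;=\; \sum_{i=1}^{d} \bigl(x_i - y_{w(i)}\bigr)^{2},
\end{equation*}
so that the Gaussian factorizes into one-dimensional Gaussians:
\begin{equation*}
\frac{e^{-|X-w\cdot Y|^2/(4t)}}{(4\pi t)^{d/2}}
\;=\; \prod_{i=1}^{d} \frac{e^{-(x_i - y_{w(i)})^2/(4t)}}{\sqrt{4\pi t}}
\;=\; \prod_{i=1}^{d} g_t(x_i,\, y_{w(i)}).
\end{equation*}

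Next, I would substitute this factorization into the heat kernel formula \eqref{HeatComplexX} from Theorem \ref{kernels}, obtaining
\begin{equation*}
p_t^W(X,Y)
\;=\; \frac{1}{|W|\,\pi(X)\,\pi(Y)}\,
\sum_{w \in S_d} \epsilon(w)\, \prod_{i=1}^{d} g_t\bigl(x_i,\, y_{w(i)}\bigr),
\end{equation*}
and then recognize the sum on the right-hand side as exactly the Leibniz expansion of the determinant of the $d \times d$ matrix with entries $g_t(x_i,y_j)$. This gives \eqref{detAd} immediately.

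There is essentially no serious obstacle: the entire argument is a routine rewriting, provided one is careful that the Weyl group of $A_{d-1}$ acts on the ambient $\R^d$ (not just on the trace-zero hyperplane) by permutations with sign, which is exactly what is required to identify the alternating sum as a determinant via the Leibniz formula. The only minor subtlety is keeping track of the permutation convention (whether $y_{w(i)}$ or $y_{w^{-1}(i)}$ appears), but since summing over $w \in S_d$ is the same as summing over $w^{-1}$ and $\epsilon(w)=\epsilon(w^{-1})$, this cosmetic issue does not affect the result.
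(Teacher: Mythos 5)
Your proposal is correct and follows essentially the same route as the paper, whose proof consists of exactly this observation: the Gaussian in the alternating sum formula of Theorem \ref{kernels}~(1) factorizes over coordinates, and the resulting signed sum over $S_d$ is the Leibniz expansion of $\det\left(g_t(x_i,y_j)\right)$. Your extra remarks on the ambient $\R^d$ action and the $w$ versus $w^{-1}$ convention are sensible but do not change the argument.
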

\begin{proof}
Formula \eqref{detAd} follows from Theorem \ref{kernels}~(1) and the definition of determinant.
\end{proof}

\begin{remark}
In \cite{Grabiner}, Grabiner computes determinant formulas   for the transition probabilities of the Dyson Brownian motion in the Weyl chambers of $A_{n-1}$, $B_n,C_n$ and $D_n$.

Note that the alternating sum formula \eqref{alter} reduces to a determinant if and only if the kernel 
${\mathcal K}(X,Y)$  has a multiplicative form
$$
{\mathcal K}(X,Y)=\prod_{i=1}^d k(x_i, y_i).
$$
This holds true for the transition probabilities  of the Brownian Motion on $\R^d$ or, more generally, of any multidimensional stochastic process $\underline{X}(t)$ with independent identically distributed components 
$X_i(t)$.
\end{remark}

Let us resume the method of alternating sums, applied in the proof of Theorem \ref{kernels}. An Euclidean kernel ${\mathcal K}(X,Y)$ (heat kernel, potential kernel, Poisson kernel, \dots) for the Laplacian $\Delta^{\R^d}$ is transformed in the following way into the kernel ${\mathcal K}^W$ 
acting on $W$-invariant functions on $M$:
\begin{equation}\label{W_Method}
{\mathcal K}^W(X,Y)=\frac{1}{|W|\,\pi(X)\,\pi(Y)} \,\sum_{w\in W}\,{\epsilon(w)} {\mathcal K}(X,w\cdot Y).
\end{equation}

Formula \eqref {HeatComplexX} is immediate
from the explicit form of the heat kernel in Dunkl theory (refer to \cite{R0}) together with Proposition \ref{new spherical} below. The formulas \eqref{NewtonComplexX}-\eqref{detAd} are new. 

However, in the harmonic analysis of flat symmetric spaces of complex type, the alternating sum formula \eqref{spherical fn} for a spherical function on $M$ given below is well known (see \cite[Chap.{} IV, Proposition 4.8 and Chap.{} II, Theorem 5.35]{Helgason3}). Dunkl  had provided a proof for the root system $A_{n-1}$ in \cite{Du0} using a similar approach as ours.

\begin{prop}\label{new spherical}
Given $\lambda\in \a_\C^*$ (the dual of the complexification of $\a$), the spherical function $\psi_{\lambda}(X)$ on $M$ is given by the formula
\begin{equation}\label{spherical fn}
\psi_{\lambda}(X)=\frac{\pi(\rho)}{2^\gamma\pi(\lambda)\,\pi(X)}
\,\sum_{w\in W}\,{\epsilon(w)} e^{\langle \lambda, w\cdot X\rangle},
\end{equation}
where $\rho=\frac{1}{2} \,\sum_{\alpha\in\Sigma^+}\,m_\alpha \alpha=
\sum_{\alpha\in\Sigma^+} \,\alpha$ and $\gamma=|\Sigma^+|$ is the number of positive roots.
\end{prop}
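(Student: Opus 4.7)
The plan is to identify both sides of \eqref{spherical fn} with the unique $W$-invariant eigenfunction of $\Delta^W$ of eigenvalue $\langle\lambda,\lambda\rangle$ that takes value $1$ at $X=0$. Uniqueness (for generic $\lambda$) follows from the intertwining relation \eqref{operator}: it converts a $W$-invariant eigenfunction of $\Delta^W$ into a $W$-anti-invariant eigenfunction of the Euclidean Laplacian $\Delta^{\R^d}$, and the space of real-analytic $W$-anti-invariant eigenfunctions of $\Delta^{\R^d}$ with eigenvalue $\langle\lambda,\lambda\rangle$ is one-dimensional, spanned by
$$A(\lambda,X):=\sum_{w\in W}\epsilon(w)\,e^{\langle w\lambda,X\rangle}$$
(Fourier-expand such an eigenfunction in plane waves $e^{\langle\mu,X\rangle}$ with $\langle\mu,\mu\rangle=\langle\lambda,\lambda\rangle$ and antisymmetrize).

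Next I would verify that the left-hand side $\psi_\lambda$ of \eqref{spherical fn} has the three required properties. $W$-invariance is immediate from \eqref{psi}: every $w\in W$ is realized on $\a$ as $\Ad(k_0)$ for some $k_0\in K$, so the change of variable $k\mapsto k\,k_0^{-1}$ absorbs it. Differentiating under the integral sign, using $\Delta^{\R^d} e^{\lambda(\cdot)}=\langle\lambda,\lambda\rangle e^{\lambda(\cdot)}$ together with the $\Ad(K)$-invariance of the Killing form, gives $\Delta^W\psi_\lambda=\langle\lambda,\lambda\rangle\psi_\lambda$, and $\psi_\lambda(0)=1$ is clear. By the preceding paragraph there exists a scalar $C(\lambda)$ with $\pi(X)\,\psi_\lambda(X)=C(\lambda)\,A(\lambda,X)$.

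The final step is to pin down $C(\lambda)$ by matching leading Taylor terms at $X=0$. Since $A(\lambda,X)$ is $W$-skew in $X$, it vanishes to order at least $\gamma:=|\Sigma^+|$ at $X=0$; its homogeneous degree-$\gamma$ term $\frac{1}{\gamma!}\sum_{w}\epsilon(w)\langle w\lambda,X\rangle^\gamma$ is $W$-skew in both $\lambda$ and $X$ with bi-degree $(\gamma,\gamma)$, hence a scalar multiple of $\pi(\lambda)\,\pi(X)$. To identify that scalar I would invoke the complex-case Weyl denominator identity
$$\prod_{\alpha>0}\bigl(e^{\alpha(X)}-e^{-\alpha(X)}\bigr)=\sum_{w\in W}\epsilon(w)\,e^{\langle w\rho,X\rangle},$$
which holds because $\rho=\sum_{\alpha>0}\alpha$ in the complex case (equivalently, Weyl's identity at doubled argument). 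Comparing leading $X$-expansions — $\prod_\alpha 2\alpha(X)=2^\gamma\pi(X)$ on the left versus $\frac{1}{\gamma!}\sum_{w}\epsilon(w)\langle w\rho,X\rangle^\gamma$ on the right — first with $\lambda=\rho$ and then by polynomial proportionality for general $\lambda$ yields
$$\frac{1}{\gamma!}\sum_{w\in W}\epsilon(w)\langle w\lambda,X\rangle^\gamma=\frac{2^\gamma}{\pi(\rho)}\,\pi(\lambda)\,\pi(X),$$
so $A(\lambda,X)/\pi(X)\to 2^\gamma\pi(\lambda)/\pi(\rho)$ as $X\to 0$. Combined with $\psi_\lambda(0)=1$ this forces $C(\lambda)=\pi(\rho)/(2^\gamma\pi(\lambda))$, which is \eqref{spherical fn}; the formula extends to all $\lambda\in\a_\C^*$ by analyticity in $\lambda$ after clearing $\pi(\lambda)$.

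The main obstacle is the normalization step, i.e. producing the correct constant $2^\gamma/\pi(\rho)$ and not merely some unspecified proportionality. The cleanest path is through the complex Weyl denominator identity as above; an alternative, if one wishes to stay entirely within the flat setting, is to obtain \eqref{spherical fn} as a Cartan-motion contraction of Harish-Chandra's classical closed-form formula for the spherical functions $\phi_\lambda$ on the complex group $G/K$, in which the constant $\pi(\rho)/(2^\gamma\pi(\lambda))$ arises directly from Harish-Chandra's $c$-function in the complex case.
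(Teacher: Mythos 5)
Your overall strategy (characterize $\psi_\lambda$ as a normalized eigenfunction, conjugate by $\pi$ via \eqref{operator}, and fix the constant through the Weyl denominator identity) is reasonable, and the normalization computation at the end is correct. Note that the paper does not actually prove Proposition \ref{new spherical}; it cites \cite[Ch.~IV, Prop.~4.8 and Ch.~II, Thm.~5.35]{Helgason3}. However, your uniqueness step has a genuine gap. The space of real-analytic $W$-anti-invariant eigenfunctions of $\Delta^{\R^d}$ with eigenvalue $\langle\lambda,\lambda\rangle$ is \emph{not} one-dimensional once the rank is at least $2$: for any $\mu\in\a_\C^*$ with $\langle\mu,\mu\rangle=\langle\lambda,\lambda\rangle$ but $\mu\notin W\cdot\lambda$, the function $\sum_{w\in W}\epsilon(w)\,e^{\langle w\mu,X\rangle}$ is another such eigenfunction, linearly independent of $A(\lambda,X)$. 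Your parenthetical recipe (``Fourier-expand in plane waves on the sphere $\langle\mu,\mu\rangle=\langle\lambda,\lambda\rangle$ and antisymmetrize'') produces exactly these extra solutions rather than eliminating them; after dividing by $\pi(X)$ and normalizing at $0$ they yield a whole family of distinct normalized $W$-invariant eigenfunctions of $\Delta^W$ with the same eigenvalue (essentially the functions $\psi_\mu$ themselves). So the single equation $\Delta^W\psi=\langle\lambda,\lambda\rangle\,\psi$ together with $\psi(0)=1$ cannot single out $\psi_\lambda$, and the scalar $C(\lambda)$ you introduce is not yet justified to exist.

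The repair is standard but must be stated: $\psi_\lambda$ is, by definition of a spherical function (or by differentiating \eqref{psi} under the integral), a joint eigenfunction of the whole commutative algebra of $G_0$-invariant differential operators on $M$, and in the complex case the radial parts of these operators are precisely $f\mapsto\pi^{-1}\,p(\partial)(\pi f)$ with $p\in S(\a)^W$ (\cite[Ch.~II, Thm.~5.35]{Helgason3}). Hence $F=\pi\,\psi_\lambda$ solves the holonomic system $p(\partial)F=p(\lambda)F$ for all $W$-invariant polynomials $p$; for regular $\lambda$ the solution space of that system is spanned by $\{e^{\langle w\lambda,\cdot\rangle}\colon w\in W\}$, whose skew-symmetric part is one-dimensional and spanned by $A(\lambda,X)$. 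With that substitution the remainder of your argument --- in particular the identity $\frac{1}{\gamma!}\sum_{w\in W}\epsilon(w)\langle w\lambda,X\rangle^\gamma=\frac{2^\gamma}{\pi(\rho)}\,\pi(\lambda)\,\pi(X)$ obtained from the doubled Weyl denominator formula, and the resulting value $C(\lambda)=\pi(\rho)/(2^\gamma\pi(\lambda))$ --- goes through and yields \eqref{spherical fn}.
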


We finish this section with a relation between the heat kernel and spherical functions which will be useful in stochastic applications of our results, see
Proposition \ref{K} and Corollary \ref{KDyson}.
Proposition \ref{25} is an immediate consequence of well-known
results by R\"osler in Dunkl theory (see for instance \cite[Lemma 4.5]{R0})
and of \cite{dJ}, ensuring that
heat kernel and spherical functions on
 flat complex symmetric spaces coincide with their Weyl group invariant analogues in
Dunkl analysis when the multiplicity $k = 1$.

\begin{prop}\label{25}
Let $M$ be a flat symmetric space of complex type. The following formula holds
\begin{equation}\label{heat-spherical}
p_t^W(X,Y)=\frac{1}{|W|\,2^d\,\pi^{d/2}\,\pi(\rho)} \,t^{-\frac{d}{2}-\gamma} \,e^{\frac{-|X|^2-|Y|^2}{4t}}\,\psi_X\left(\frac{Y}{2t}\right).
\end{equation}
\end{prop}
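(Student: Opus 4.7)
The proposition should follow by directly combining the two alternating-sum formulas already established: the heat kernel formula \eqref{HeatComplexX} from Theorem \ref{kernels} and the spherical function formula \eqref{spherical fn} from Proposition \ref{new spherical}. I expect no serious obstacle: the argument is essentially an algebraic match-up once the squared distance in the heat kernel exponent is expanded.

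The plan is to start from
\[
p_t^W(X,Y)=\frac{1}{|W|\,(4\pi t)^{d/2}\,\pi(X)\,\pi(Y)}\sum_{w\in W}\epsilon(w)\,e^{-|X-w\cdot Y|^2/(4t)}
\]
and expand the exponent using that $W$ acts by isometries with respect to the Killing form, so $|w\cdot Y|=|Y|$ for every $w$. Thus
\[
|X-w\cdot Y|^2 = |X|^2 + |Y|^2 - 2\langle X, w\cdot Y\rangle,
\]
and the factor $e^{-(|X|^2+|Y|^2)/(4t)}$ pulls out of the sum, leaving
\[
p_t^W(X,Y)=\frac{e^{-(|X|^2+|Y|^2)/(4t)}}{|W|\,(4\pi t)^{d/2}\,\pi(X)\,\pi(Y)}\sum_{w\in W}\epsilon(w)\,e^{\langle X,\,w\cdot Y\rangle/(2t)}.
\]

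Next I would use Proposition \ref{new spherical} with the functional $\lambda=X$ (identified with an element of $\a^*_\C$ via the Killing form) evaluated at $Y/(2t)$. Since $\pi$ is homogeneous of degree $\gamma=|\Sigma^+|$, we have $\pi(Y/(2t))=(2t)^{-\gamma}\pi(Y)$, so
\[
\sum_{w\in W}\epsilon(w)\,e^{\langle X,\,w\cdot Y\rangle/(2t)} = \frac{2^\gamma\,\pi(X)\,\pi(Y/(2t))}{\pi(\rho)}\,\psi_X\!\left(\frac{Y}{2t}\right) = \frac{\pi(X)\,\pi(Y)\,t^{-\gamma}}{\pi(\rho)}\,\psi_X\!\left(\frac{Y}{2t}\right).
\]

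Finally, substituting this back into the previous display, the factors $\pi(X)\pi(Y)$ cancel, the constant $(4\pi t)^{d/2}=2^d \pi^{d/2} t^{d/2}$ is split off, and the powers of $t$ combine to $t^{-d/2-\gamma}$, yielding exactly the stated identity \eqref{heat-spherical}. The only point needing care is the bookkeeping of the $2^\gamma$ and $(2t)^{-\gamma}$ factors coming from the homogeneity of $\pi$, and the identification of $X\in\a$ with a functional via the Killing form so that $\langle X, w\cdot Y\rangle$ in the exponent is the same pairing appearing in \eqref{spherical fn}.
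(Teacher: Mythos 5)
Your proof is correct and is essentially the computation the paper itself gives in the remark immediately following Proposition \ref{25}: expand $|X-w\cdot Y|^2$ using the $W$-invariance of the Killing norm, recognize the alternating sum as $\psi_X(Y/(2t))$ via \eqref{spherical fn}, and use the homogeneity $\pi(Y/(2t))=(2t)^{-\gamma}\pi(Y)$ to collect the constants. (The paper's nominal proof is a citation to R\"osler's Dunkl-theoretic results, but the direct verification it supplies matches yours step for step.)
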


\begin{remark}
We provide here a simple explanation for the constant occurring in \eqref{heat-spherical}. From \eqref{HeatComplexX} and \eqref{spherical fn},
\begin{align*}
p_t^W(X,Y)
&=\frac{1}{|W|\,(4\,\pi\, t)^{d/2}\,\pi(X)\,\pi(Y)}\,\sum_{w\in W}\,{\epsilon(w)}{e^{-{\frac{|X-w\cdot Y|^{2}}{4\,t}}}}\\
&=\frac{1}{|W|\,2^d\,\pi^{d/2}\,\pi(\rho)} \,t^{-\frac{d}{2}-\gamma}
\,\frac{\pi(\rho)}{2^\gamma\,\pi(X)\,\pi\left(\frac{Y}{2t}\right)}\,\sum_{w\in W}\, \epsilon(w) \,e^{\frac{\langle X,w\cdot Y\rangle}{2t}}\\
&=
\frac{1}{|W|\,2^d\,\pi^{d/2}\,\pi(\rho)} \,t^{-\frac{d}{2}-\gamma} \,e^{\frac{-|X|^2-|Y|^2}{4t}}\,\psi_X\left(\frac{Y}{2t}\right).
\end{align*}
Note that the constants in \cite{R0} lead to the same constant as in  \eqref{heat-spherical} even though the functional $\rho$
is not used in the context of Dunkl theory. The same phenomenon
will appear for the constant for the Poisson kernel, see Remark 
\ref{normalization}.
\end{remark}


\section{Asymptotic behavior of the kernels}\label{asymp}
To simplify the notation, we will write $f\,\stackrel{Y_0}{\sim}\,g$ if $\lim_{X\to Y_0}\,\frac{f(X)}{g(X)}=1$. 

The main results of this Section are Theorems \ref{Poiss} and	\ref{Newt} which give asymptotics	of the Poisson and Newton kernels of the flat complex symmetric space $M$. In their proofs, we need some knowledge of Dunkl analysis on $\R^d$.

Consider $\R^d$ with a root system $\Sigma$. The basic information on the Dunkl analysis in this context can be found in \cite{Roesler}. Denote the Dunkl Laplacian by $\Delta_k$ and the intertwining operator by $V_k$.
	
Recall now the formula of Dunkl (\cite{Du1,DuX}) for the Dunkl-Poisson kernel of the unit open ball $B=B(0,1)$.

\begin{equation}\label{PoissonDunkl}
P_k(X,Y)=\frac{2^{2\,\gamma}\,(d/2)_\gamma}{\pi(\rho)\,|W|\,w_d}\,V_k\left[\frac{1-|X|^2}{(1-2\langle X,\cdot\rangle +|X|^2)^{\gamma+
d/2}}\right] (Y),\ \ X\in B,~ Y\in\partial B,~ \gamma = \sum_{\alpha\in \Sigma}\,k_\alpha.
\end{equation}

The constant in \eqref{PoissonDunkl} is different from the one given in \cite{Du1,DuX}.  Our constant is explained below in Remark \ref{normalization}.

The flat complex symmetric spaces $M$ correspond to the formula
\eqref{PoissonDunkl} in the $W$-invariant case and with $k_\alpha=1$. Then $\gamma=|\Sigma_+|$ expresses the number of positive roots. 
	
A formula for the Dunkl-Newton kernel $N_k(X,Y)$,   analogous to \eqref{PoissonDunkl}, was proven in \cite{Gallardo2}.

\subsection{Poisson kernel of the flat complex symmetric space}

The following technical results will prove useful further on.

\begin{lemma}\label{tech}
\begin{align*}
\partial(\pi)\,|X|^{-d}
&=2^\gamma\,\prod_{k=0}^{\gamma-1}\,(-d/2-k)\,\pi(X)\,|X|^{-d-2\,\gamma}\\
\partial(\pi)\,\log |X|
&=(-2)^{\gamma-1}\,(\gamma-1)!\,\pi(X)\,|X|^{-2\,\gamma}.
\end{align*}
\end{lemma}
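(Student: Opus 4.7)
The plan is to reduce both identities to a single Bochner-type formula: for any polynomial $h$ on $\R^d$ that is harmonic and homogeneous of degree $m$, and any smooth function $F$ on $(0,\infty)$,
\begin{equation}\label{plan-bochner}
h(\partial)\bigl[F(|X|^2)\bigr]\;=\;2^m\,h(X)\,F^{(m)}(|X|^2).
\end{equation}
Granting \eqref{plan-bochner}, the two displayed formulas fall out by specializing $F$: in the first case $F(t)=t^{-d/2}$, whose $\gamma$-th derivative is $\prod_{k=0}^{\gamma-1}(-d/2-k)\,t^{-d/2-\gamma}$; in the second case $F(t)=\tfrac12\log t$, with $F^{(\gamma)}(t)=\tfrac12(-1)^{\gamma-1}(\gamma-1)!\,t^{-\gamma}$, which after multiplication by $2^\gamma$ gives the factor $(-2)^{\gamma-1}(\gamma-1)!$ on $\pi(X)\,|X|^{-2\gamma}$.

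To apply \eqref{plan-bochner} with $h=\pi$, I first need that $\pi(X)=\prod_{\alpha>0}\alpha(X)$ is harmonic of degree $\gamma=|\Sigma^+|$. This is a classical fact: $\pi$ is the unique (up to scalar) $W$-skew polynomial of minimal degree, and $\Delta\pi$ is again $W$-skew but of strictly smaller degree, hence must vanish. The homogeneity of degree $\gamma$ is obvious.

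To prove \eqref{plan-bochner}, I would argue by induction on $m$. For $m=0$ the statement is trivial, and for $m=1$, if $h(X)=\langle a,X\rangle$, then $h(\partial)F(|X|^2)=\sum a_i\cdot 2x_i F'(|X|^2)=2h(X)F'(|X|^2)$. For the induction step, write any homogeneous harmonic $h$ of degree $m$ as a sum of monomials and apply $\partial_{x_i}$ to $h(\partial)F(|X|^2)$; using the Leibniz rule together with $\Delta h=0$ removes the ``lower order'' terms that would otherwise appear, leaving precisely the relation needed to pass from $m-1$ to $m$. Alternatively, one can prove \eqref{plan-bochner} in one stroke by expanding $h(\partial)F(|X|^2)$ via the general Pizzetti-type expansion
$$
h(\partial)\bigl[F(|X|^2)\bigr]=\sum_{k\ge0}\frac{2^{m-k}}{k!}\,(\Delta^{k}h)(X)\,F^{(m-k)}(|X|^2),
$$
and observing that every term with $k\ge1$ vanishes because $\Delta h=0$.

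The only nontrivial step is really the Bochner identity \eqref{plan-bochner}, but since it is both standard and easily proved by induction, I do not expect any genuine obstacle; the rest is a one-line specialization in each case.
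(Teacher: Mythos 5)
Your proof is correct, but it takes a genuinely different route from the paper. The paper argues that $|X|^{d+2\gamma}\,\partial(\pi)\,|X|^{-d}$ is a $W$-skew polynomial of degree at most $\gamma$, hence a constant multiple of $\pi(X)$, and then pins down the constant by extracting the leading term of the iterated directional derivatives (discarding ``terms which do not have the right form''). You instead invoke the Bochner--Hecke identity $h(\partial)\bigl[F(|X|^2)\bigr]=2^m h(X)\,F^{(m)}(|X|^2)$ for $h$ harmonic and homogeneous of degree $m$, together with the classical fact that $\pi$ is harmonic (which follows from the same divisibility-by-$\pi$ argument for skew polynomials that the paper uses, applied to $\Delta\pi$). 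Your specializations $F(t)=t^{-d/2}$ and $F(t)=\tfrac12\log t$ reproduce both constants exactly, and your route has the advantage of delivering the constant automatically rather than by a leading-term inspection; the cost is that you must establish the Bochner identity, which is standard (the cleanest way is the Pizzetti-type expansion you mention, proved by induction on the degree of an arbitrary homogeneous polynomial). One immaterial slip: the coefficient in your expansion should be $2^{m-2k}/k!$ rather than $2^{m-k}/k!$, but since every term with $k\geq 1$ vanishes for harmonic $h$, this does not affect the argument.
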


\begin{proof}
We see easily that $|X|^{d+2\,\gamma}\,\partial(\pi)\,|X|^{-d}$ is a skew polynomial of degree at most $\gamma$. It must therefore be a constant multiple of $\pi(X)$. Note from the definition of $\partial(\pi)$ that 
\begin{align*}
\partial(\pi)\,f(X)=\prod_{\alpha>0}\,\left.\frac{\partial~}{\partial t_\alpha}\right|_{t_\alpha=0}\,f(X+\sum_{\alpha>0}\,t_\alpha\,H_\alpha) 
\end{align*}
where $H_\alpha$ is defined by the relation $\langle X,H_\alpha\rangle=\alpha(X)$ for $X\in\a$. Hence, 
\begin{align*}
\partial(\pi)\,|X|^{-d}=\prod_{\alpha>0}\,\left.\frac{\partial~}{\partial t_\alpha}\right|_{t_\alpha=0}\,\langle X+\sum_{\alpha>0}\,t_\alpha\,H_\alpha,X+\sum_{\alpha>0}\,t_\alpha\,H_\alpha\rangle^{-d/2}.
\end{align*}

After applying the operators $\left.\frac{\partial~}{\partial t_\alpha}\right|_{t_\alpha=0}$, we will be left with 
the term 
\begin{align*}
(-d/2)\,(-d/2-1)\,\cdots\,(d/2-(\gamma-1))\,\langle X,X\rangle^{-d/2-\gamma}\,\prod_{\alpha>0}\,(2\,\alpha(X))
\end{align*}
and other terms which do not have the right form.  This tells us that desired constant is $2^\gamma\,\prod_{k=0}^{\gamma-1}\,(-d/2-k)$.

A similar reasoning applies for the computation of $\partial(\pi)\,\log |X|$.
\end{proof}

\begin{prop}\label{tech2}
Let $T(X,Y)=\frac{1}{\pi(X)\,\pi(Y)} \,\sum_{w\in W}\,\frac{\epsilon(w)}{|X-w\cdot Y|^{d}}$. 
Then $T(0,Y)=\frac{2^{2\,\gamma}\,(d/2)_\gamma}{\pi(\rho)}\,|Y|^{-d-2\,\gamma}$.
\end{prop}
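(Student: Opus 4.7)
I would set $h(X) := \sum_{w \in W}\epsilon(w)\,|X - w\cdot Y|^{-d}$ and observe that $h$ is $W$-skew in $X$: the substitution $w \mapsto (w')^{-1}w$ gives $h(w'\cdot X) = \epsilon(w')\,h(X)$. Any $W$-skew smooth function on $\a$ factors uniquely as $\pi$ times a $W$-invariant smooth function (by successive division across each wall $\{\alpha = 0\}$, on which $h$ must vanish), so $h(X) = \pi(X)\,G(X)$ with $G$ smooth and $W$-invariant (with $Y$ as a parameter), and by definition $T(0,Y) = G(0)/\pi(Y)$.

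Next, I would extract $G(0)$ by applying the constant-coefficient operator $\partial(\pi)$ of order $\gamma$ in $X$ and evaluating at $X=0$. Since $\pi$ is homogeneous of degree $\gamma$ and hence vanishes to order exactly $\gamma$ at the origin, the Leibniz expansion of $\partial(\pi)(\pi G)$ at $X=0$ keeps only the term in which all derivatives hit $\pi$, yielding $\partial(\pi)h\bigr|_{X=0} = G(0)\,\partial(\pi)\pi$. On the other hand, by translation invariance of $\partial(\pi)$, Lemma \ref{tech}, and the identities $\pi(-w\cdot Y) = (-1)^{\gamma}\epsilon(w)\pi(Y)$ and $|w\cdot Y| = |Y|$,
\begin{align*}
\partial(\pi)\,|X-w\cdot Y|^{-d}\bigr|_{X=0}
&= 2^{\gamma}\prod_{k=0}^{\gamma-1}(-d/2-k)\,\pi(-w\cdot Y)\,|Y|^{-d-2\gamma}\\
&= 2^{\gamma}(d/2)_{\gamma}\,\epsilon(w)\,\pi(Y)\,|Y|^{-d-2\gamma},
\end{align*}
using $(-1)^{\gamma}\prod_{k=0}^{\gamma-1}(-d/2-k) = (d/2)_{\gamma}$. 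Summing over $w$ (with $\epsilon(w)^2 = 1$) gives $\partial(\pi)h\bigr|_{X=0} = |W|\,2^{\gamma}(d/2)_{\gamma}\,\pi(Y)\,|Y|^{-d-2\gamma}$.

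Finally, one must divide by $\partial(\pi)\pi$: invoking the Macdonald-type constant $\partial(\pi)\pi = |W|\,\pi(\rho)/2^{\gamma}$ (the multiplicity-one identity, cf.\ \cite{Helgason3}), I obtain $G(0) = 2^{2\gamma}(d/2)_{\gamma}\,\pi(Y)\,|Y|^{-d-2\gamma}/\pi(\rho)$, whence the claimed formula. The single delicate ingredient is this last normalization; a parallel route avoiding it is to equate \eqref{PoissonComplexX} with the Dunkl--Poisson formula \eqref{PoissonDunkl} at $X=0$ (where $V_k[1]=1$ collapses the Dunkl side to $\tfrac{2^{2\gamma}(d/2)_{\gamma}}{\pi(\rho)|W|w_d}$ for $|Y|=1$) and then extend from $|Y|=1$ to arbitrary $Y$ by the homogeneity $T(0,tY) = t^{-d-2\gamma}T(0,Y)$, which follows directly from the definition of $T$.
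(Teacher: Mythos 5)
Your proposal is correct and follows essentially the same route as the paper's proof: factor out $\pi(X)$ from the skew sum, apply $\partial(\pi)$ at $X=0$ so that only the term where all derivatives hit $\pi$ survives, use Lemma \ref{tech} (translated) together with $\pi(-w\cdot Y)=(-1)^{\gamma}\epsilon(w)\pi(Y)$, and divide by $\partial(\pi)\pi=|W|\pi(\rho)/2^{\gamma}$. Your write-up is in fact slightly more explicit than the paper's on the Leibniz step justifying $\partial(\pi)(\pi G)\bigr|_{X=0}=G(0)\,\partial(\pi)\pi$, and the alternative verification via \eqref{PoissonDunkl} and homogeneity is a sensible cross-check.
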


\begin{proof}
Note first that $\partial(\pi)_X\,|X-Y|^{-d}=2^\gamma\,\prod_{k=0}^{\gamma-1}\,(-d/2-k)\,\pi(X-Y)\,|X-Y|^{-d-2\,\gamma}$. Consider 
$B(X,Y)=\pi(X)\,T(X,Y)=\frac{1}{\pi(Y)} \,\sum_{w\in W}\,\epsilon(w)\,|X-w\cdot Y|^{-d}$.
We apply the differential operator $\left.\partial(\pi)\right|_{X=0}$ to $B$. We find
\begin{align*}
\partial(\pi)(\pi)\,T(0,Y)&=2^\gamma\,\prod_{k=0}^{\gamma-1}\,(-d/2-k)
\,\frac{1}{\pi(Y)} \,\left.\sum_{w\in W}\,\epsilon(w)\,\pi(X-w\cdot Y)\,|X-w\cdot Y|^{-d-2\,\gamma}\right|_{X=0}\\
&=(-1)^\gamma\,2^\gamma\,\prod_{k=0}^{\gamma-1}\,(-d/2-k)\,|W|\,|Y|^{-d-2\,\gamma}.
\end{align*}

Finally,
\begin{align*}
T(0,Y)
&=\frac{(-1)^\gamma\,2^\gamma\,\prod_{k=0}^{\gamma-1}\,(-d/2-k)\,|W|}{\partial(\pi)(\pi)} \,|Y|^{-d-2\,\gamma}
=\frac{(-1)^\gamma\,2^\gamma\,\prod_{k=0}^{\gamma-1}\,(-d/2-k)\,|W|}{\pi(\rho)\,|W|/2^\gamma} \,|Y|^{-d-2\,\gamma}\\
&=\frac{2^{2\,\gamma}\,(d/2)_\gamma}{\pi(\rho)}\,|Y|^{-d-2\,\gamma}.
\end{align*}
\end{proof}

\begin{cor}\label{at0}
We have
\begin{align*}
P^W(0,Y)&=\frac{2^{2\,\gamma}\,(d/2)_\gamma}{\pi(\rho)\,|W|\,w_d}\\
N^W(0,Y)&=\frac{-2^{2\,\gamma-1}\,(\gamma-1)!}{
2\,\pi\,|W|\,\pi(\rho)}
\,|Y|^{-2\,\gamma}\ \hbox{if $d=2$}\\
N^W(0,Y)&=\frac{2^{2\,\gamma}\,((d-2)/2)_\gamma}{|W|\,(2-d)\,w_d\,\pi(\rho)}\,|Y|^{2-d-2\,\gamma}\ \hbox{if $d\geq 3$}
\end{align*}
\end{cor}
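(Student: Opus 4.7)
The proof will follow the same pattern as Proposition \ref{tech2}, handled in three pieces corresponding to the three formulas.

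For the Poisson kernel, the plan is immediate. Theorem \ref{kernels}~(3) gives
\[
P^W(X,Y)=\frac{1-|X|^2}{|W|\,w_d}\,T(X,Y),
\]
so I would just set $X=0$, use $|Y|=1$ since $Y\in\partial B$, and substitute the value of $T(0,Y)$ furnished by Proposition \ref{tech2}.

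For the Newton kernel in dimension $d\ge 3$, I would run exactly the argument of Proposition \ref{tech2} but with $d$ replaced by $d-2$ throughout, since Theorem \ref{kernels}~(2) expresses $N^W$ as a constant times
\[
S(X,Y):=\frac{1}{\pi(X)\pi(Y)}\sum_{w\in W}\frac{\epsilon(w)}{|X-w\cdot Y|^{d-2}}.
\]
The first formula of Lemma \ref{tech} (with $d\to d-2$) gives $\partial(\pi)_X|X-Y|^{-(d-2)}=2^\gamma\prod_{k=0}^{\gamma-1}(-(d-2)/2-k)\,\pi(X-Y)|X-Y|^{-(d-2)-2\gamma}$, and then the computation of $\partial(\pi)(\pi)\,S(0,Y)$ proceeds verbatim, producing a factor of $((d-2)/2)_\gamma$ in place of $(d/2)_\gamma$ and an exponent $|Y|^{2-d-2\gamma}$.

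For the logarithmic case $d=2$, I would treat
\[
R(X,Y):=\frac{1}{\pi(X)\pi(Y)}\sum_{w\in W}\epsilon(w)\log|X-w\cdot Y|
\]
by the same scheme, using the second formula of Lemma \ref{tech}: $\partial(\pi)_X\log|X-Y|=(-2)^{\gamma-1}(\gamma-1)!\,\pi(X-Y)|X-Y|^{-2\gamma}$. The numerator $A(X):=\sum_w\epsilon(w)\log|X-w\cdot Y|$ is $W$-anti-invariant in $X$, hence factors as $\pi(X)B(X,Y)$ with $B$ analytic; since $\pi$ is homogeneous of degree $\gamma$, Leibniz gives $\partial(\pi)_X[\pi(X)B(X,Y)]\bigr|_{X=0}=\partial(\pi)(\pi)\,B(0,Y)$. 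Evaluating $\partial(\pi)_X A(X)$ at $X=0$ yields $-2^{\gamma-1}(\gamma-1)!\,|W|\,\pi(Y)|Y|^{-2\gamma}$ (after collecting the sign $(-2)^{\gamma-1}(-1)^\gamma=-2^{\gamma-1}$ coming from $\pi(-w\cdot Y)=(-1)^\gamma\epsilon(w)\pi(Y)$), dividing by $\partial(\pi)(\pi)=\pi(\rho)|W|/2^\gamma$ produces $R(0,Y)=-2^{2\gamma-1}(\gamma-1)!\,|Y|^{-2\gamma}/\pi(\rho)$, and multiplying by the prefactor $1/(2\pi|W|)$ from Theorem \ref{kernels}~(2) finishes the case.

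The only slightly delicate point, as in Proposition \ref{tech2}, is making sense of $R(0,Y)$ when the explicit formula is of the form $0/0$; the Leibniz argument above (or, alternatively, an appeal to Lemma \ref{LLL} to justify analyticity and then a direct Taylor expansion) is what legitimises the evaluation. Everything else is routine substitution into the formulas already proved.
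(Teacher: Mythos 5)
Your proposal is correct and matches the paper's intended derivation: the corollary is stated immediately after Proposition \ref{tech2} precisely because it follows by substituting $X=0$, $|Y|=1$ into the Poisson formula and by repeating the computation of Proposition \ref{tech2} with the two formulas of Lemma \ref{tech} for the Newton kernel in the cases $d\ge 3$ and $d=2$. Your sign bookkeeping in the logarithmic case and the Leibniz justification of the $0/0$ evaluation are both sound, so nothing is missing.
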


\begin{prop}\label{PW}
The Poisson kernel of the unit ball on the flat complex symmetric space $M$ is given by
\begin{align}\label{PoissonDunklW}
P^W(X,Y)
&=\frac{2^{2\,\gamma}\,(d/2)_\gamma}{\pi(\rho)\,|W|\,w_d}\,\mathcal{A}^*\left(\frac{1-|X|^2}{(1-2\,\langle X,\cdot\rangle+| X|^2)^{\gamma+d/2}}\right)(Y),
\end{align}
where $\mathcal{A}^*$ denotes the dual Abel transform on $M$.
\end{prop}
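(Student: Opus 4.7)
\medskip

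\noindent\textbf{Proof plan.} The proposition should follow by combining the Dunkl--Poisson formula \eqref{PoissonDunkl} at multiplicity $k=1$ with the correspondence, due to de~Jeu \cite{dJ}, between flat complex symmetric spaces and $W$-invariant Dunkl analysis. My strategy is to identify the intertwining operator $V_1$ (in its $W$-invariant incarnation) with the dual Abel transform $\mathcal{A}^*$ on $M$, and then specialize to the particular function $g_X(Z) := (1-|X|^2)/(1-2\langle X, Z\rangle + |X|^2)^{\gamma+d/2}$.

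First, I would invoke \cite{dJ} to conclude that, for the complex root system attached to $M$, the $W$-invariant Dunkl Laplacian at $k_\alpha\equiv 1$ coincides with the operator $\Delta^W$ of \eqref{operator}. Hence the $W$-invariant Dunkl--Poisson kernel at $k=1$ agrees with $P^W$, so that \eqref{PoissonDunkl} reads
\begin{equation*}
P^W(X,Y) = \frac{2^{2\gamma}\,(d/2)_\gamma}{\pi(\rho)\,|W|\,w_d}\, V_1[g_X](Y)
\end{equation*}
after the appropriate $W$-symmetrization in $Y$ (which is automatic since $P^W$ is $W$-invariant).

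Second, I would verify $V_1 = \mathcal{A}^*$ on the dense family of exponentials $Z \mapsto e^{\langle \lambda, Z\rangle}$, $\lambda\in\a_\C^*$. On the one hand, by Proposition~\ref{new spherical} together with the identification of the $W$-invariant Dunkl (Bessel) kernel at $k=1$ with the spherical function on $M$, one has $V_1[e^{\langle \lambda,\cdot\rangle}]^W(Y) = \psi_\lambda(Y)$. On the other hand, the definition \eqref{psi} of $\psi_\lambda$ and the definition of the dual Abel transform on $M$ give
\begin{equation*}
\mathcal{A}^*[e^{\langle \lambda,\cdot\rangle}](Y) = \int_K e^{\lambda(\Ad(k) Y)}\,dk = \psi_\lambda(Y).
\end{equation*}
So the two operators coincide on exponentials.

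Third, I would extend this identity from exponentials to the analytic function $g_X$. Since $|X|<1$, the function $Z \mapsto g_X(Z)$ is real-analytic on a neighborhood of $\overline{B}$ and admits a convergent expansion (say, in powers of $X$) into monomials, each of which is a limit of exponentials (alternatively, one writes $g_X$ directly via an integral of exponentials coming from the generating function identity for Gegenbauer polynomials). Both $V_1$ and $\mathcal{A}^*$ are continuous on the relevant space of analytic functions, so the identity $V_1^W = \mathcal{A}^*$ extends to $g_X$ and \eqref{PoissonDunklW} follows.

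The main obstacle, and the only non-routine point, is justifying the continuity/density step making the passage from exponentials to $g_X$ rigorous; an attractive alternative that bypasses this is to \emph{verify} \eqref{PoissonDunklW} by comparing its right-hand side directly with the alternating sum formula \eqref{PoissonComplexX} of Theorem~\ref{kernels}, using the explicit integral representation $\mathcal{A}^* f(Y) = \int_K f(\Ad(k) Y)\,dk$ and the Weyl integration of $K$-invariants on $\p$.
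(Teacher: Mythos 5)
Your route is essentially correct but genuinely different from the paper's. The paper does not identify $V_1$ with $\mathcal{A}^*$ abstractly; it writes $P^W(X,Y)=|W|^{-2}\sum_{w,w_0}P_k(w\cdot X,w_0\cdot Y)$, substitutes the measure representation $V_1[f](Y)=\int_{C(Y)}f\,d\mu_Y$ from \eqref{PoissonDunkl}, and then moves the Weyl group elements through the integrals by hand, using the $W$-invariance of the dual-Abel density $K(Z,Y)$ to collapse the double sum. Your approach instead proves the operator identity on exponentials and extends by density; this is conceptually cleaner and reusable (it is exactly how Corollary~\ref{NewtonAstar} would also follow), but two points deserve care. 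First, the literal claim ``$V_1=\mathcal{A}^*$'' is false on non-$W$-invariant functions --- $\mathcal{A}^*[f](Y)$ is always $W$-invariant in $Y$ while $V_1[f](Y)$ is not, and this is precisely why the paper says ``some proof is needed''; what your exponential computation actually establishes, and what you need, is $\tfrac{1}{|W|}\sum_{w}V_1[f](w\cdot Y)=\mathcal{A}^*[f](Y)$. Second, the density step you flag as the main obstacle is in fact routine once you use the measure representations of both sides: $\tfrac{1}{|W|}\sum_w V_1[\cdot](w\cdot Y)$ and $\mathcal{A}^*[\cdot](Y)$ are integration against compactly supported probability measures on $C(Y)$ (R\"osler's positivity for $V_k$, Kostant convexity for $\mathcal{A}^*$), and two such measures with equal Laplace transforms $\lambda\mapsto\int e^{\langle\lambda,Z\rangle}\,d\mu(Z)$ coincide; the identity then holds for every continuous $f$, in particular for $g_X$, with no appeal to analyticity. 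You should also say a word about why the $Y$-symmetrized kernel is $P^W$ (it reproduces $W$-invariant $\Delta^W$-harmonic functions and is a posteriori $W$-invariant in $X$, since $g_{w\cdot X}=g_X\circ w^{-1}$ and the dual-Abel measure is $W$-invariant). By contrast, the ``attractive alternative'' in your last sentence is not viable: comparing with \eqref{PoissonComplexX} would require evaluating $\mathcal{A}^*[g_X]$ in closed form, which is exactly what one does not have --- the value of Proposition~\ref{PW} is that it gives a representation \emph{different} from the alternating sum, used later to extract the leading term in Theorem~\ref{Poiss}.
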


Proposition \ref{PW} will be essential to establish \eqref{firstTerm} in Theorem \ref{Poiss}.
Recall that the dual of the Abel transform can be defined by the equation
\begin{align*}
\mathcal{A}^*(f)(X)=\int_K\,f(\pi_\a(\Ad(k)\,X))\,dk
\end{align*}
where, as before, $\pi_\a$ is the orthogonal projection from $\p$ to $\a$ with respect to the Killing form. Note in particular that 
$\mathcal{A}^*(e^{\lambda(\cdot)})(X)=\psi_\lambda(X)$. Note also {(see \cite[Ch.{} IV, Theorem 10.11]{Helgason3})} that unless $C(X)$ reduces to $\{X\}$, there exists a density $K(H,X)$ such that 
\begin{align*}
\mathcal{A}^*(f)(X)=\int_{C(X)}\,f(H)\,K(H,X)\,dH.
\end{align*}

\begin{proof}[Proof of Proposition \ref{PW}]	
It should be noted that for Weyl-invariant $f$, $\mathcal{A}^*(f)=V_k(f)$ (refer to \cite{dJ}). Since the argument of $\mathcal{A}^*$ in \eqref{PoissonDunklW} is not Weyl-invariant, some proof is needed.
Let $K(Z,Y)$ be the kernel of the dual Abel transform. Using \eqref{PoissonDunkl}, we have
\begin{align*}
P^W(X,Y)&=\frac{\sum_{w,w_0\in W}\,P_k(w\cdot X,w_0\cdot Y)}{|W|^2}\qquad \hbox{(with $k=1$)}\\
&=\frac{2^{2\,\gamma}\,(d/2)_\gamma}{\pi(\rho)\,|W|^3\,w_d}\,\sum_{w,w_0\in W} \,\int_{C(w_0\cdot Y)} \,\frac{1-|w\cdot X|^2}{(1-2\,\langle w\cdot X,Z\rangle+|w\cdot X|^2)^{\gamma+d/2}}\,d\mu_{w_0\cdot Y}(Z)\\
&=\frac{2^{2\,\gamma}\,(d/2)_\gamma}{\pi(\rho)\,|W|^3\,w_d}\,(1-|X|^2)\,\int_{C(Y)} \,\sum_{w,w_0\in W}\,\frac{1}{(1-2\,\langle w\cdot X,Z\rangle+| X|^2)^{\gamma+d/2}} \,d\mu_{Y}(w_0^{-1}\cdot Z)\\
&=\frac{2^{2\,\gamma}\,(d/2)_\gamma}{\pi(\rho)\,|W|^3\,w_d} \,(1-|X|^2)\,\int_{C(Y)} \,\overbrace{\sum_{w,w_0\in W}\,\frac{1}{(1-2\,\langle w\cdot X,w_0\cdot Z\rangle+| X|^2)^{\gamma+d/2}}}^{\hbox{Weyl-invariant}}	 \,d\mu_{Y}(Z)\\
&=\frac{2^{2\,\gamma}\,(d/2)_\gamma}{\pi(\rho)\,|W|^3\,w_d}\,(1-|X|^2)\,\int_{C(Y)} \,\sum_{w,w_0\in W}\,\frac{1}{(1-2\,\langle w\cdot X,w_0\cdot Z\rangle+| X|^2)^{\gamma+d/2}} \,K(Z,Y)\,dZ\\
&=\frac{2^{2\,\gamma}\,(d/2)_\gamma}{\pi(\rho)\,|W|^3\,w_d}\,(1-|X|^2)\,\int_{C(Y)} \,\sum_{w,w_0\in W}\,\frac{1}{(1-2\,\langle X,w^{-1}w_0\cdot Z\rangle+| X|^2)^{\gamma+d/2}} \,K(Z,Y)\,dZ\\
&=\frac{2^{2\,\gamma}\,(d/2)_\gamma}{\pi(\rho)\,|W|^3\,w_d}\,(1-|X|^2)\,\int_{C(Y)} \,\sum_{w,w_0\in W}\,\frac{1}{(1-2\,\langle X,Z\rangle+| X|^2)^{\gamma+d/2}} \,K(w_0^{-1}\,w\,Z,Y)\,dZ\\
&=\frac{2^{2\,\gamma}\,(d/2)_\gamma}{\pi(\rho)\,|W|\,w_d}\,(1-|X|^2)\,\int_{C(Y)} \,\frac{1}{(1-2\,\langle X,Z\rangle+| X|^2)^{\gamma+d/2}} \,K(Z,Y)\,dZ.
\end{align*} 
\end{proof}

\begin{remark}\label{normalization}
Note that our normalizing constant is different from what is found in \cite{Du1,DuX}.  We explain here how they correspond in the complex case.  In \cite{Du1}, the Poisson kernel $P^W(X,Y)$ is normalized in the following manner: 
\begin{align*}
u(X)=c_d'\,\int_{\partial B(0,1)}\,f(y)\,P^W(X,Y)\,\pi(Y)^2\,\frac{dY}{w_d}
\end{align*}
where $c_d'$ is such that
\begin{align*}
1=c_d'\,\int_{\partial B(0,1)}\,\pi(Y)^2\,\frac{dY}{w_d}
\end{align*}
where, reading through \cite[Page 1215]{Du1},
\begin{align*}
c_d'=2^\gamma\,(d/2)_\gamma\,\prod_{\alpha>0}\,\left(\frac{|\alpha|^2}{2}\,(\langle\alpha,\rho\rangle/|\alpha|^2+1)\right)^{-1}.
\end{align*}
Our different normalizations come down to the equality
\begin{align*}
\frac{2^{2\,\gamma}\,(d/2)_\gamma}{\pi(\rho)\,|W|\,w_d}=\frac{c_d'}{w_d}
\end{align*}
which gives the interesting equality
\begin{align*}
\frac{\pi(\rho)\,|W|}{2^\gamma}=\prod_{\alpha>0}\,\left(\frac{|\alpha|^2}{2}\,(\langle\alpha,\rho\rangle/|\alpha|^2+1)\right).
\end{align*}
This equality is easily verified directly for the classical Lie algebras and for $\g_2$ (the other exceptional Lie algebras require more work).
It should be noted that in \cite{Du1}, Dunkl used the notation $\nu_h$ instead of $\rho$ but refers to the fact that Opdam uses $\rho$ in \cite{Opdam}.
\end{remark}

\begin{cor}\label{NewtonAstar}
The Newton kernel of the flat complex symmetric space $M$ is given by
\begin{align*}
N^W(X,Y)
&=\frac{2^{2\,\gamma}\,((d-2)/2)_\gamma}{|W|\,(2-d)\,w_d\,\pi(\rho)}\,\mathcal{A}^*\left({(|Y|^2-2\,\langle X,\cdot\rangle+| X|^2)^{(2-d-2\,\gamma)/2}} \right)(Y).
\end{align*}
\end{cor}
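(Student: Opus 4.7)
The plan is to mirror, line-for-line, the symmetrization argument used in the proof of Proposition \ref{PW}, starting this time from the Dunkl--Newton kernel formula of Gallardo--Yor instead of Dunkl's Poisson formula \eqref{PoissonDunkl}. At multiplicity $k=1$, \cite{Gallardo2} provides
\begin{equation*}
N_k(X,Y) = \frac{2^{2\gamma}((d-2)/2)_\gamma}{|W|(2-d)w_d\,\pi(\rho)}\,V_k\!\left[(|\cdot|^2 - 2\langle X,\cdot\rangle + |X|^2)^{(2-d-2\gamma)/2}\right](Y),
\end{equation*}
which is the exact analogue of \eqref{PoissonDunkl} with $(1-|X|^2)$ replaced by $1$ and exponent $\gamma + d/2$ replaced by $\gamma + (d-2)/2$. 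The $W$-invariant kernel $N^W$ is recovered as the double average $N^W(X,Y) = |W|^{-2}\sum_{w,w_0 \in W} N_k(w\cdot X, w_0\cdot Y)$, exactly as in the Poisson case.

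First I would represent $V_k$ as integration against the kernel $K(Z,Y)$ of the dual Abel transform $\mathcal{A}^*$, using the identity $V_k = \mathcal{A}^*$ on $W$-invariant integrands from \cite{dJ}. Then, inside each integral, I would perform the change of variable $Z \mapsto w_0^{-1}\cdot Z$ and rewrite $\langle w\cdot X, Z\rangle = \langle X, w^{-1}w_0\cdot Z\rangle$ as in the Poisson proof. The integrand $(|\cdot|^2 - 2\langle X,\cdot\rangle + |X|^2)^{(2-d-2\gamma)/2}$ behaves under this substitution in precisely the same way as $(1 - 2\langle X,\cdot\rangle + |X|^2)^{-\gamma - d/2}$ did, since $|Z|^2$ is invariant under $w_0^{-1}$.

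Collapsing the double sum via the Weyl-invariance of $K(\cdot,Y)$ in its first argument then produces the factor $|W|^2$ which cancels the prefactor, leaving
\begin{equation*}
N^W(X,Y) = \frac{2^{2\gamma}((d-2)/2)_\gamma}{|W|(2-d)w_d\,\pi(\rho)}\,\mathcal{A}^*\!\left((|\cdot|^2 - 2\langle X,\cdot\rangle + |X|^2)^{(2-d-2\gamma)/2}\right)(Y)
\end{equation*}
as claimed. The only real obstacle is verifying the normalizing constant, since \cite{Gallardo2} uses a different normalization convention (compare Remark \ref{normalization} for the Poisson case). As a consistency check one can set $X=0$: the integrand becomes $|\cdot|^{2-d-2\gamma}$, which is $W$-invariant, and $\mathcal{A}^*$ reduces to the identity on the orbit of $Y$; this yields $N^W(0,Y) = \frac{2^{2\gamma}((d-2)/2)_\gamma}{|W|(2-d)w_d\,\pi(\rho)}|Y|^{2-d-2\gamma}$, which matches Corollary \ref{at0} exactly (the same computation via $\partial(\pi)$ that underlies Proposition \ref{tech2}).
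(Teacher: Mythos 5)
Your overall strategy is exactly the paper's: the published proof is one line, namely ``apply the same computations as for the Poisson kernel to formula (6.1) of \cite{Gallardo2}, with the constant adjusted as in Remark \ref{normalization}.'' So the symmetrization over $W\times W$, the passage from $V_k$ to the dual Abel kernel $K(Z,Y)$, and the change of variables $Z\mapsto w_0^{-1}\cdot Z$ are all the right moves. (A minor point: the reference is Gallardo--Rejeb, not Gallardo--Yor.)

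There is, however, a genuine error in your input formula, and it propagates to your conclusion. You write the Dunkl--Newton kernel as $V_k\bigl[(|\cdot|^2-2\langle X,\cdot\rangle+|X|^2)^{(2-d-2\gamma)/2}\bigr](Y)$, i.e.\ with $|Z|^2$ in the integration variable of $V_k$, and you carry this through (``$|Z|^2$ is invariant under $w_0^{-1}$'') to end with $\mathcal{A}^*\bigl((|\cdot|^2-2\langle X,\cdot\rangle+|X|^2)^{(2-d-2\gamma)/2}\bigr)(Y)$. The Corollary --- and the correct formula from \cite{Gallardo2}, which one recovers for instance by integrating the Dunkl heat kernel $p_t^k(X,Y)\propto t^{-d/2-\gamma}e^{-(|X|^2+|Y|^2)/4t}\,V_k[e^{\langle X,\cdot\rangle/2t}](Y)$ over $t\in(0,\infty)$ --- has the \emph{constant} $|Y|^2$ in that slot, not $|\cdot|^2$. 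The two are not interchangeable: the representing measure of $V_k=\mathcal{A}^*$ at $Y$ is supported on the convex hull $C(Y)$ of $W\cdot Y$, on which $|Z|<|Y|$ off the orbit, so the two integrands genuinely differ and produce different functions. Your own consistency check exposes the problem: at $X=0$ your integrand is $|\cdot|^{2-d-2\gamma}$, and $\mathcal{A}^*$ does \emph{not} act as the identity on non-constant radial functions (compare $\mathcal{A}^*(e^{\lambda(\cdot)})=\psi_\lambda\neq e^{\lambda(\cdot)}$); since $|Z|\le|Y|$ on $C(Y)$ and the exponent is negative, one gets $\mathcal{A}^*(|\cdot|^{2-d-2\gamma})(Y)\ge|Y|^{2-d-2\gamma}$ with strict inequality in general, contradicting Corollary \ref{at0}. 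With the correct constant $|Y|^2$, the integrand at $X=0$ is literally the constant $|Y|^{2-d-2\gamma}$, $\mathcal{A}^*$ of a constant is that constant, and the check goes through. Once the input formula is corrected, the rest of your argument is sound and coincides with the paper's.
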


\begin{proof}
We apply the same computations as for the Poisson kernel to formula \cite[(6.1)]{Gallardo2}
(the constant has been adjusted to follow our conventions as per Remark \ref{normalization}).
\end{proof}

We now start to study the asymptotic behavior of the Poisson kernel $P^W(X,Y)$. Let us introduce some notations.
We define 
\begin{align*}
R(X,Y)= \sum_{w\in W}\,\frac{\epsilon(w)}{|X-w\cdot Y|^{d}}\ \ {\rm and}\ \ T(X,Y)=\frac{R(X,Y)}{\pi(X)\pi(Y)}
\end{align*}
and therefore, 
\begin{align*}
P^W(X,Y)=\frac{1-|X|^2}{|W|\,w_d}\,T(X,Y).
\end{align*}
The function $R(X,Y)$ is defined for $X,Y\in{\a}$ such that $X\not\in W\cdot Y=\{w\cdot Y|\ w\in W\}$. We will denote this domain by
\begin{align*}
D:=\{(X,Y)\in\a^2|\ X\not\in W\cdot Y\}
\end{align*} 
The function $T(X,Y)$ is, for now, defined for non-singular $X$, $Y\in\a$ (\emph{i.e.} such no nonzero root vanish on $X$ or on $Y$) such that $X\not\in WY$. We will see in Proposition
\ref{PropertiesRT} that the function $T(X,Y)$ extends by continuity to an analytic function on the domain $D$.

Studying the properties of $P^W(X,Y)$ is equivalent to studying the properties of $T(X,Y)$ and $R(X,Y)$. We will give some of them in Proposition
\ref{PropertiesRT}. We start by introducing two auxiliary results.

\begin{lemma}\label{prep}
Assume $a_1$, \dots, $a_d$ are not all 0 and let $U$ be an open set. 
Let $q$ be an analytic function on $U$ which is 0 whenever $\sum_{k=1}^d\,a_k\,x_k=0$. Then $q(X)=\left(\sum_{k=1}^d\,a_k\,x_k\right) \,r(X)$ where $r$ is a analytic function on $U$.
\end{lemma}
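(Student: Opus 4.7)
The plan is to reduce the statement to a one-variable division problem by a linear change of coordinates and then apply the standard Hadamard-type argument. Let $\ell(X) := \sum_{k=1}^d a_k x_k$. Since the $a_k$ are not all zero, after relabeling I may assume $a_d \neq 0$.

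First I would introduce the linear change of variables $\Phi:(x_1,\dots,x_d) \mapsto (y_1,\dots,y_d)$ with $y_k = x_k$ for $k<d$ and $y_d = \ell(X)$. This is a linear isomorphism of $\R^d$, so $\Phi(U)$ is open and $\tilde q := q\circ \Phi^{-1}$ is analytic on $\Phi(U)$. By hypothesis $\tilde q$ vanishes identically on the hyperplane $\{y_d=0\}\cap \Phi(U)$.

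Next, the key step is to write, by the fundamental theorem of calculus applied along the segment from $(y_1,\dots,y_{d-1},0)$ to $(y_1,\dots,y_d)$ (which lies in $\Phi(U)$ provided we shrink to a convex neighbourhood of each point, and then patch together),
\begin{equation*}
\tilde q(y_1,\dots,y_d) \;=\; \tilde q(y_1,\dots,y_d) - \tilde q(y_1,\dots,y_{d-1},0) \;=\; y_d\int_0^1 \frac{\partial \tilde q}{\partial y_d}(y_1,\dots,y_{d-1},t y_d)\,dt.
\end{equation*}
The integral defines a function $\tilde r(y)$ which is analytic on $\Phi(U)$: analyticity follows from differentiation under the integral sign and the fact that the Taylor series of $\tilde q$ around any point of $\Phi(U)$ can be integrated term by term in $t\in[0,1]$ with uniform convergence on small polydiscs. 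Setting $r := \tilde r\circ \Phi$, one obtains $q(X) = \ell(X)\,r(X)$ on $U$ with $r$ analytic, as required.

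The argument has no real obstacle; the only point needing minor care is the global patching, since the convexity argument for Hadamard's identity is local. One handles this by noting that the identity $\tilde q = y_d \tilde r$ holds on each convex neighbourhood in $\Phi(U)$, the function $\tilde r$ is uniquely determined outside $\{y_d=0\}$ by $\tilde r = \tilde q/y_d$, and this common value extends analytically across $\{y_d=0\}$ by the local formula; hence $\tilde r$ is well-defined and analytic on all of $\Phi(U)$. (Alternatively, one may simply invoke the Weierstrass division theorem as in Lemma~\ref{LLL}, which is immediately applicable in the new coordinates since $y_d$ is a distinguished polynomial of degree one.)
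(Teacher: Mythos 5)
Your proof is correct and follows essentially the same route as the paper's: a linear change of coordinates turning $\sum_k a_k x_k$ into a coordinate function, followed by the local Hadamard identity $\tilde q(y)=y_d\int_0^1 \partial_{y_d}\tilde q(y_1,\dots,y_{d-1},ty_d)\,dt$ near the hyperplane and direct division away from it. The paper likewise notes the alternative via the Weierstrass division theorem (Lemma~\ref{LLL}), so no further comment is needed.
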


\begin{proof}
The Lemma follows from Lemma \ref{LLL}. We give here an elementary proof.

Using a change of variable, we can assume that $a_1=1$ and $a_i=0$ for $i>1$. It is also enough to show that for every $X_0=(b_1,\dots,b_d)\in U$, there exists $\epsilon>0$ such that the result holds in the ball $B(X_0,\epsilon)$. If $X_0\not=0$, then pick $\epsilon>0$ small enough so that $(x_1,\dots,x_d)\in B(X_0, \epsilon)$ implies $x_1\not=0$. Then we can pick $r(X)=q(X)/x_1$.

Suppose now that $b_1=0$. We then have
$\displaystyle q(x_1,\dots,x_d)=x_1\,\overbrace{\int_0^1\,\frac{\partial~}{\partial x_1}q(t\,x_1,x_2,\dots,x_d)\,dt}^{r(X)}$ for $(x_1,\dots,x_d)\in B(X_0,\epsilon)\subset U$.
\end{proof}

\begin{prop}\label{AnalyticFactor}
Let $p(X)=\prod_{i=1}^d\,\langle\alpha_i,X\rangle$ where no $\alpha_i$'s is a multiple of another $\alpha_j$ and let $U$ be an open set. If $q$ is an analytic function on $U$ which is 0 whenever $\alpha_i(X)=0$ for some $i$ then $q(X)=p(X)\,r(X)$ where $r$ is an analytic function on $U$.
\end{prop}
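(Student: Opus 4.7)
The plan is to argue by induction on the number $d$ of linear factors appearing in $p$. The base case $d=1$ is precisely Lemma \ref{prep} applied to the linear form $\langle \alpha_1, X\rangle$ (choosing coordinates so that it becomes $\sum a_k x_k$ with not all $a_k$ zero).

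For the inductive step, assume the statement holds for any product of fewer than $d$ pairwise non-proportional linear forms. Given $q$ analytic on $U$ and vanishing on $\bigcup_{i=1}^d\{\alpha_i(X)=0\}$, first apply Lemma \ref{prep} to the single form $\langle\alpha_d,X\rangle$ to obtain an analytic function $r_1:U\to\R$ with
\begin{equation*}
q(X)=\langle\alpha_d,X\rangle\, r_1(X), \qquad X\in U.
\end{equation*}
The crux of the argument is to show that $r_1$ still vanishes on each hyperplane $H_i=\{\alpha_i(X)=0\}$ for $i<d$. Because $\alpha_i$ is not a scalar multiple of $\alpha_d$, the hyperplane $H_i$ is not contained in $H_d=\{\alpha_d(X)=0\}$, so $H_i\cap H_d$ is a proper affine subspace of $H_i$ and therefore $H_i\setminus H_d$ is open and dense in $H_i\cap U$. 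On this dense set $\langle\alpha_d,X\rangle\neq 0$ and $q(X)=0$, forcing $r_1(X)=0$; continuity of $r_1$ then extends the vanishing to all of $H_i\cap U$.

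Having verified that $r_1$ is an analytic function on $U$ that vanishes on the union of the $d-1$ hyperplanes $H_1,\ldots,H_{d-1}$ (whose defining forms are still pairwise non-proportional), the inductive hypothesis yields an analytic $r$ on $U$ with
\begin{equation*}
r_1(X)=\prod_{i=1}^{d-1}\langle\alpha_i,X\rangle\, r(X).
\end{equation*}
Multiplying by $\langle\alpha_d,X\rangle$ gives $q(X)=p(X)\,r(X)$, completing the induction.

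The main obstacle is the density step: after extracting one linear factor via Lemma \ref{prep}, one must verify that the analytic quotient inherits the vanishing property on each remaining hyperplane. This is exactly where the hypothesis that no $\alpha_i$ is a scalar multiple of any other $\alpha_j$ enters, since it guarantees that $H_i\not\subset H_d$ and hence that the set where one can safely divide by $\langle\alpha_d,\cdot\rangle$ is dense in $H_i$.
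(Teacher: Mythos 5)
Your proof is correct and follows essentially the same route as the paper's: induction on the number of linear factors, Lemma \ref{prep} for the base case and for extracting a factor, and a density-plus-continuity argument (relying on the pairwise non-proportionality of the $\alpha_i$) to show the quotient still vanishes on the remaining hyperplanes. The only difference is cosmetic: you peel off the single factor $\langle\alpha_d,\cdot\rangle$ first and then invoke the induction hypothesis, whereas the paper applies the induction hypothesis to the first $d-1$ factors and extracts the last one afterwards.
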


\begin{proof}
We use induction on $n$. Lemma \ref{prep} shows that the result is true for $n=1$. Assume it is true for $n-1$, $n\geq2$ and write $q(X)=
\prod_{i=1}^{n-1}\,\langle\alpha_i,X\rangle\,r(X)$. Since $q(X)=0$ when $\langle\alpha_n,X\rangle=0$, we conclude that
$r(X)=0$ on the set $\{X|\langle\alpha_n,X\rangle=0~\hbox{and}~\langle\alpha_i,X\rangle\not=0,~i<n\}$. By continuity, we deduce that $r(X)=0$ when 
$\langle\alpha_n,X\rangle=0$ and, using Lemma \ref{prep} once more, we can conclude.
\end{proof}

\begin{remark}
We thank the referee for pointing out that
Lemma \ref{prep} and 
Proposition \ref{AnalyticFactor} can be also be proven with the help of the Weierstrass division theorem, via Lemma \ref{LLL}.
\end{remark}

\begin{prop}\label{PropertiesRT}
~
\begin{enumerate}
\item (Symmetry in $X$ and $Y$) $R(X,Y)=R(Y,X)$ and $T(X,Y)=T(Y,X)$.

\item (Skew-symmetry) $R(w_0 X,Y)=\epsilon(w_0)\, R(X,Y)$ and $R(X, w_0 Y)=\epsilon(w_0)\, R(X,Y)$.
\item (Nullity of $R$ on singular arguments) $R(X,Y)$ is zero whenever at least one of $X$ or $Y$ is singular.
\item (analytic factorization of $R$, analytic extension of $T$ to $D$.)
There exists a function $f$ analytic on $D$ such that
$R(X,Y)=\pi(X)\pi(Y) f(X,Y)$ on $D$. Equivalently, the function $T$ extends to an analytic function on $D$.
\item (Non-nullity of $T$ and $P^W$) When $X\in B$ and $Y\in\partial B$ then $T(X,Y)>0$ and $P^W(X,Y)>0$.
\end{enumerate}
\end{prop}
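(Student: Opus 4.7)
For (1) and (2) I will carry out direct substitutions in the defining sum of $R$, using only that $W$ acts by isometries and that $\epsilon\colon W\to\{\pm1\}$ is a homomorphism. For (2), the isometry relation $|w_0 X-wY|=|X-w_0^{-1}wY|$ followed by the reindexing $w=w_0 w'$ gives $R(w_0 X,Y)=\epsilon(w_0)R(X,Y)$. For (1), the substitution $w\mapsto w^{-1}$ together with $|X-w^{-1}Y|=|wX-Y|$ yields $R(X,Y)=\sum_w\epsilon(w)|wX-Y|^{-d}=R(Y,X)$, and hence $T(X,Y)=T(Y,X)$ since $\pi(X)\pi(Y)$ is symmetric. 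The skew-symmetry in $Y$ in (2) then follows by combining (1) with skew-symmetry in $X$. Part (3) is an immediate consequence: if $\alpha(Y)=0$ for some $\alpha>0$, then $s_\alpha\cdot Y=Y$, so (2) gives $R(X,Y)=R(X,s_\alpha\cdot Y)=-R(X,Y)$, forcing $R(X,Y)=0$; the singular-$X$ case is identical.

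For (4), note first that $D$ is open, since each orbit $W\cdot Y$ is finite and therefore closed. On $D$, every summand $|X-w\cdot Y|^{-d}=(|X-w\cdot Y|^2)^{-d/2}$ is real analytic (the base being a strictly positive polynomial on $D$), so $R$ is real analytic on $D$. To produce the factorization, I will view $(X,Y)$ as a single variable in $\R^{2d}$ and consider the $2|\Sigma^+|$ linear forms $\{(X,Y)\mapsto\alpha(X)\}_{\alpha>0}\cup\{(X,Y)\mapsto\alpha(Y)\}_{\alpha>0}$. No two of them are proportional, because the $X$-forms and the $Y$-forms depend on disjoint sets of coordinates, while within each family the roots $\alpha>0$ are pairwise non-proportional. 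By (3), $R$ vanishes on each of these $2|\Sigma^+|$ hyperplanes, so Proposition \ref{AnalyticFactor} applied in $\R^{2d}$ produces an analytic function $f$ on $D$ with $R(X,Y)=\pi(X)\pi(Y)f(X,Y)$. Since $T$ agrees with $f$ on the set where $\pi(X)\pi(Y)\neq0$, this $f$ is the claimed analytic extension of $T$ to $D$.

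For (5), I will use the representation of Proposition \ref{PW}. For $X\in B$ and any $Z$ in the convex hull $C(Y)=\mathrm{conv}(W\cdot Y)$ with $Y\in\partial B$, the invariance $|wY|=|Y|=1$ gives $|Z|\leq1$, so
\[
1-2\langle X,Z\rangle+|X|^{2}\;\geq\;1-2|X||Z|+|X|^{2}\;\geq\;(1-|X|)^{2}\;>\;0,
\]
and also $1-|X|^{2}>0$. The integrand inside $\mathcal{A}^{*}$ in \eqref{PoissonDunklW} is therefore strictly positive, so $P^{W}(X,Y)>0$, and then the identity $P^{W}(X,Y)=\frac{1-|X|^{2}}{|W|\,w_{d}}\,T(X,Y)$ forces $T(X,Y)>0$.

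The main obstacle I anticipate is (4): one must be careful not to attempt the factorization one variable at a time (which would produce something analytic in $X$ for each fixed $Y$ but not jointly analytic), and instead recognize that the root forms $\alpha(X)$ and $\alpha(Y)$ lift to pairwise non-proportional linear forms on $\R^{2d}$, so that Proposition \ref{AnalyticFactor} can be invoked once, globally.
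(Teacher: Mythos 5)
Your proposal is correct and follows essentially the same route as the paper: direct reindexing for (1)--(2), the reflection argument $R(X,Y)=R(X,s_\alpha\cdot Y)=-R(X,Y)$ for (3), a single application of Proposition \ref{AnalyticFactor} to the pairwise non-proportional linear forms $(X,Y)\mapsto\alpha(X)$, $(X,Y)\mapsto\alpha(Y)$ on $\R^{2d}$ for (4), and positivity of the dual Abel transform of the (strictly positive) Dunkl--Poisson integrand via Proposition \ref{PW} for (5). The only difference is that you supply details the paper leaves implicit (openness of $D$, analyticity of $R$ there, and the $(1-|X|)^2$ lower bound), all of which check out.
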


\begin{proof}

The proof of (1) and (2) is straightforward.
\begin{itemize}
\item[(3)] Suppose $\alpha(Y)=0$. We use Property 2 and $\epsilon(\sigma_\alpha)=-1$ where $\sigma_\alpha$ is the reflection
with respect to the hyperplane $\{\alpha=0\}$. Since $R(X,Y)/\pi(Y)$ is analytic, the statement follows.

\item[(4)] This follows from Proposition \ref{AnalyticFactor}.
	
\item[(5)]	This follows from Proposition \ref{PW}.
The dual Abel integral transform  of a strictly positive function is strictly positive.
\end{itemize}

\end{proof}

\begin{theorem}\label{Poiss}
Let $Y_0\in \partial B$, $\Sigma'=\{\alpha\in\Sigma|\ \alpha(Y_0)=0\}$ and $\Sigma'_+=\Sigma'\cap\Sigma^+$.
Then 
\begin{equation}\label{asymptPoisson}
P^W(X,Y_0)\stackrel{Y_0}{\sim} \frac{2^{2\,\gamma'}\,(d/2)_{\gamma'}}{|W|\,w_d\,\pi'(\rho')\,(\pi''(Y_0))^2}\,\frac{1-|X|^2}{|X-Y_0|^{2\gamma'+d}}
\end{equation}
where $\gamma'=|\Sigma_+'|$ is the number of positive roots annihilating $Y_0$, {$\pi'(Y)=\prod_{\alpha\in \Sigma_+'}\,\langle\alpha,Y\rangle$} and $\pi''(Y)=\prod_{\alpha\in\Sigma_+\setminus \Sigma_+'}\,\langle\alpha,Y\rangle$.
\end{theorem}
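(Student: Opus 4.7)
The plan is to use the analytic extension from Proposition \ref{PropertiesRT}(4) to reduce the problem to a directional derivative of $R(X,Y)$ at $Y_0$, and then to apply a sub-system analogue of the identity $\partial(\pi)(\pi)=\pi(\rho)|W|/2^\gamma$ used in the proof of Proposition \ref{tech2}. Throughout, by $W$-equivariance I may assume $Y_0\in\overline{\a^+}$, so that $W_0:=W_{Y_0}$ is the parabolic subgroup generated by the simple reflections $\{s_\alpha:\alpha\in\Pi\cap\Sigma'\}$, and in particular $\epsilon|_{W_0}=\epsilon_{W_0}$ (since such a $W_0$ preserves $\Sigma_+\setminus\Sigma'_+$).

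Write $\pi=\pi'\pi''$ with $\pi'(Y_0)=0$ and $\pi''(Y_0)\neq 0$. By Proposition \ref{PropertiesRT}(4), $R(X,Y)=\pi(X)\pi(Y)f(X,Y)$ with $f$ analytic on $D$, so
\[
P^W(X,Y_0)=\frac{1-|X|^2}{|W|\,w_d}\,f(X,Y_0),
\]
and the problem reduces to establishing $f(X,Y_0)\stackrel{Y_0}{\sim}\frac{2^{2\gamma'}(d/2)_{\gamma'}}{\pi'(\rho')(\pi''(Y_0))^2\,|X-Y_0|^{d+2\gamma'}}$. To extract $f(X,Y_0)$, apply the differential operator $\partial(\pi')_Y$ to $R(X,Y)=\pi(X)\pi'(Y)\pi''(Y)f(X,Y)$ and evaluate at $Y_0$. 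Since $\pi'(Y_0)=0$, Leibniz's rule leaves only the term in which every one of the $\gamma'$ derivatives lands on $\pi'$; combined with the sub-system identity $\partial(\pi')(\pi')=\pi'(\rho')|W_0|/2^{\gamma'}$ (the very same identity used in Proposition \ref{tech2}, transferred to the sub-root system $\Sigma'$ via the $W_0$-skew-symmetry argument forcing $|X|^{d+2\gamma'}\partial(\pi')|X|^{-d}$ to be a constant multiple of $\pi'(X)$), this yields
\[
f(X,Y_0)=\frac{2^{\gamma'}}{|W_0|\,\pi'(\rho')\,\pi''(Y_0)\,\pi(X)}\,\partial(\pi')_Y R(X,Y)\big|_{Y=Y_0}.
\]

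The remaining computation splits $R$ by the cosets of $W_0$. For $w\notin W_0$, $wY_0\neq Y_0$, so $|X-wY|^{-d}$ is analytic in a neighborhood of $(Y_0,Y_0)$ and the term $\partial(\pi')_Y|X-wY|^{-d}|_{Y_0}$ stays bounded as $X\to Y_0$. For $w\in W_0$, the substitution $u=X-Y_0-w(Y-Y_0)$ (using $wY_0=Y_0$) together with the identity $\prod_{\alpha\in\Sigma'_+}(w\alpha)=\epsilon(w)\,\pi'$ (a length-counting in $W_0$, valid because $W_0$ permutes $\Sigma'$) reduces the computation to $\partial(\pi')_u|u|^{-d}$ at $u=X-Y_0$. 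The $\pi'$-analogue of Lemma \ref{tech} then gives
\[
\partial(\pi')_Y|X-wY|^{-d}\big|_{Y_0}=\epsilon(w)\,2^{\gamma'}(d/2)_{\gamma'}\,\frac{\pi'(X-Y_0)}{|X-Y_0|^{d+2\gamma'}},
\]
and summing over $W_0$ with the sign $\epsilon(w)$ (so the signs square to $1$) produces the singular contribution $|W_0|\,2^{\gamma'}(d/2)_{\gamma'}\,\pi'(X-Y_0)/|X-Y_0|^{d+2\gamma'}$, which dominates the bounded contribution from $W\setminus W_0$.

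Finally, the crucial cancellation is that $\alpha(X)=\alpha(X-Y_0)$ for every $\alpha\in\Sigma'_+$ (because $\alpha(Y_0)=0$), hence $\pi'(X)=\pi'(X-Y_0)$. Writing $\pi(X)=\pi'(X-Y_0)\,\pi''(X)$, the factor $\pi'(X-Y_0)$ in the numerator of the main term cancels the one in the denominator of $f(X,Y_0)$, and $\pi''(X)\to\pi''(Y_0)$ completes the asymptotic; multiplying by $(1-|X|^2)/(|W|\,w_d)$ gives \eqref{asymptPoisson}. I expect the main obstacles to be the bookkeeping verifications of the two identities $\partial(\pi')(\pi')=\pi'(\rho')|W_0|/2^{\gamma'}$ and $\prod_{\alpha\in\Sigma'_+}(w\alpha)=\epsilon(w)\pi'$ for $w\in W_0$; once these are in hand (both following cleanly from the parabolic nature of $W_0$), the asymptotic falls out.
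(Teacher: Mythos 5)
Your strategy is genuinely different from the paper's and, up to one step, it works: where the paper identifies the $W_{Y_0}$-part of the alternating sum with the Poisson kernel of the sub-root-system $\Sigma'$ and evaluates it in closed form via the Dunkl formula and the dual Abel transform (whose kernel degenerates to $\delta_{Y_0}$ because $\mathrm{conv}(W_{Y_0}\cdot Y_0)=\{Y_0\}$), you extract the leading constant by applying $\partial(\pi')_Y$ at $Y_0$ and invoking the sub-system versions of Lemma \ref{tech} and of the identity $\partial(\pi)(\pi)=\pi(\rho)\,|W|/2^\gamma$ from the proof of Proposition \ref{tech2}. Those ingredients check out: the Leibniz argument isolating $\partial(\pi')(\pi')\,\pi''(Y_0)\,f(X,Y_0)$ is correct because $\pi'$ is homogeneous of degree $\gamma'$ in $Y-Y_0$, the chain-rule and sign computation for $w\in W_0=W_{Y_0}$ is right, and the constants assemble into \eqref{asymptPoisson}. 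This shows the dual Abel transform (Proposition \ref{PW}) is not strictly needed for the leading term, which is a nice observation.

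The gap is in the final claim that the singular contribution ``dominates the bounded contribution from $W\setminus W_0$.'' Write $\partial(\pi')_YR(X,Y)\big|_{Y=Y_0}=M(X)+B(X)$ with $M(X)=|W_0|\,2^{\gamma'}(d/2)_{\gamma'}\,\pi'(X-Y_0)\,|X-Y_0|^{-d-2\gamma'}$ and $B$ bounded and analytic near $Y_0$. After dividing by $\pi(X)=\pi'(X-Y_0)\,\pi''(X)$, the main term does converge to the asserted asymptotic, but the remainder becomes $B(X)/\bigl(\pi'(X-Y_0)\,\pi''(X)\bigr)$, and $\pi'(X-Y_0)$ vanishes on the hyperplanes $\{\alpha=0\}$, $\alpha\in\Sigma'_+$, all of which pass through $Y_0$. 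Boundedness of $B$ therefore does not make the remainder $o$ of the main term: their ratio is of order $B(X)\,|X-Y_0|^{d+2\gamma'}/\pi'(X-Y_0)$, which a priori blows up as $X\to Y_0$ along or near those hyperplanes (and on the hyperplanes themselves $M(X)=0$ while $B(X)$ need not be). To close this, observe that your own identity $\pi(X)\,c'\,\pi''(Y_0)\,f(X,Y_0)=M(X)+B(X)$ forces $B$ to vanish on $\{\pi'(\cdot-Y_0)=0\}$ near $Y_0$ (the left side and $M$ both vanish there for $X\neq Y_0$, then use continuity), so Proposition \ref{AnalyticFactor} gives $B(X)=\pi'(X-Y_0)\,\tilde B(X)$ with $\tilde B$ analytic, hence bounded; the remainder after division is then $\tilde B(X)/\pi''(X)$, which is negligible against the main term. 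This is precisely the device the paper uses to control its term $T_2$; with it inserted, your proof is complete.
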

\begin{proof}
Let $W'=\{w\in W| w\cdot Y=Y\}$.  In this proof, we consider $X\in V=B(Y_0,\epsilon)$
with $\epsilon>0$ fixed and chosen in such a way that
\begin{align*}
\alpha(\bar{V})\subset (0,\infty) \ {\rm for}\ \alpha\in \Sigma_+ \setminus \Sigma_+' 
\ \ {\rm and } \ \ w\, V\cap V=\emptyset\ \hbox{for every $w\in W\setminus W'$}.
\end{align*}

Using Theorem \ref{kernels}, we have
\begin{align*}
P^W(X,Y)=\frac{1}{|W|\,w_d}\,\frac{1-|X|^2}{\pi(X)\,\pi(Y)}\,\sum_{w\in W}\,\frac{\epsilon(w)}{|X-w\cdot Y|^{d}}.
\end{align*} 

We consider $X\in V\setminus \{Y_0\}$ and we deal with
\begin{align}
T(X,Y_0){=} \frac{|W|\,w_d}{1-|X|^2} \,P^W(X,Y_0)= \frac{1}{\pi(X)\,\pi(Y_0)}
\,\sum_{w\in W}\,\frac{\epsilon(w)}{|X-w\cdot Y_0|^{d}}=\frac{R(X,Y_0)}{\pi(X)\,\pi(Y_0)}.\label{TY0}
\end{align}
By Proposition \ref{PropertiesRT} applied to the root systems $\Sigma$ and $\Sigma'$, all the expressions in \eqref{TY0} are well defined for $X\in V\setminus \{Y_0\}$, if needed in the limit sense.
 
We decompose the sum $\sum_{w\in W}$
into two terms, the first being the sum over the subgroup $W'=\{w\in W| w\cdot Y_0=Y_0\}$ which is the Weyl group of the root subsystem 
$\Sigma'$. We obtain
\begin{align*}
T(X,Y_0)=\frac{\sum_{w\in W} {\epsilon(w)}|X-w\cdot Y_0|^{-d}}{\pi(X)\,\pi(Y_0)}=
\frac{\sum_{w\in W'} {\epsilon(w)}{|X-w\cdot Y_0|^{-d}}}{\pi(X)\,\pi(Y_0)}+
\frac{ \sum_{w\in W\setminus W'} {\epsilon(w)}{|X-w\cdot Y_0|^{-d}}}{\pi(X)\,\pi(Y_0)}.
\end{align*}
By Proposition \ref{PropertiesRT}, all the expressions in the last formula are well defined for $X\in V\setminus \{Y_0\}$, if needed in the limit sense.
Denote
\begin{align*}
T_1(X,Y_0)= \frac{\sum_{w\in W'} {\epsilon(w)}{|X-w\cdot Y_0|^{-d}}}{\pi(X)\,\pi(Y_0)}\ \ {\rm and}\ T_2(X,Y_0)=
\frac{ \sum_{w\in W\setminus W'} {\epsilon(w)}{|X-w\cdot Y_0|^{-d}}}{\pi(X)\,\pi(Y_0)}.
\end{align*} 
Let $\pi'(X)=\prod_{\alpha\in\Sigma_+'} \alpha(X)$ and
$\pi''(X)=\prod_{\alpha\in \Sigma_+\setminus \Sigma_+'} \alpha(X)$.
Observe that by Theorem \ref{kernels},
\begin{align*}
\pi''(X)\,\pi''(Y_0)\,T_1(X,Y_0)= \frac{\sum_{w\in W'}\,{\epsilon(w)}{|X-w\cdot Y_0|^{-d}}}{\pi'(X)\,\pi'(Y_0)}=\frac{|W'|\,w_d}{1-|X|^2} \,P^{W'}(X,Y_0)
\end{align*}
where $P^{W'}(X,Y)$ is the Poisson kernel for the flat symmetric space $(\R^d, \Sigma')$ corresponding to the complex root system $\Sigma'$. The convex hull
$C'(Y_0)=\hbox{conv}({W'\,Y_0})= \{Y_0\}$, so by Proposition \ref{PW} and the properties of $\mathcal{A}^*$, 
\begin{align}
\frac{1}{1-|X|^2}\,P^{W'}(X,Y_0)&=\frac{2^{2\,\gamma'}\,(d/2)_{\gamma'}}{\pi(\rho')\,|W'|\,w_d}
\,\int _{C(Y_0)}\,\frac{1}{(1-2\,\langle X, Z\rangle+| X|^2)^{\gamma'+d/2}} \delta_{\{Y_0\}}(dZ) \nonumber \\
&= \frac{2^{2\,\gamma'}\,(d/2)_{\gamma'}}{\pi'(\rho')\,|W'|\,w_d}\,\frac{1}{|X-Y_0|^{2\gamma'+d}} \label{firstTerm}
\end{align}
where $X\in B\cap V$.
 
We now prove that the function $X\mapsto T_2(X,Y_0)$ is bounded on $V$, which, together with \eqref{firstTerm}, will conclude the proof.
We denote by
\begin{align*}N(X,Y)={ \sum_{w\in W\setminus W'} {\epsilon(w)}{|X-w\cdot Y|^{-d}}}\end{align*}
the numerator of
$T_2$. Observe that $N(X,Y)$ is an analytic function on $V\times V$. The function 
\begin{align*}
T_2(X,Y)=
\frac{ \sum_{w\in W\setminus W'} {\epsilon(w)}{|X-w\cdot Y|^{-d}}}{\pi(X)\,\pi(Y)}
\end{align*}
is well defined and analytic for $(X,Y)\in V\times V\setminus D$ with $D=\{(X,Y)\in\a\times\a\colon X=Y\}$, 
since $T(X,Y)$ and $T_1(X,Y)$ have these properties by Proposition \ref{PropertiesRT} and 
$T_2=T-T_1$.

This implies that if $ X'\in V$ or $ Y'\in V$ are singular (\emph{i.e.} $\alpha(X')=0$ or $\alpha(Y')=0$ for some $\alpha\in \Sigma_+'$) and $X'\not=Y'$ then the numerator $N(X',Y')=0$ since otherwise the limit $N(X,Y)/\pi(X)\pi(Y)$ could not exist
when $(X,Y) \to (X',Y')$.

We deduce that if $ X'\in V$ or $ Y'\in V$ and $\alpha(X')=0$ or $\alpha(Y')=0$ for some $\alpha\in \Sigma_+'$) then $N(X',Y')=0$. 
This is also true for $X'=Y'$ since such points are
limits when $t$ tends to 1 of  $(t\,X',Y')$ with singular $t\,X'\not=Y'$
and $N(t\,X',Y')$ converges to $ N(X',Y')$. 

By Proposition \ref{AnalyticFactor}, there exists a function $F(X,Y)$ analytic on $V\times V$ such that 
\begin{align*}
N(X,Y)=\pi'(X)\pi'(Y) F(X,Y),\ \ \ \ X,Y\in V
\end{align*}
and, finally, 
\begin{align*}
T_2(X,Y)=\frac{F(X,Y)}{\pi''(X)\pi''(Y)},\ \ \ \ X,Y\in V
\end{align*}
(we have $\min_{X\in \bar{V}}\,\pi''(X)>0$ since $\pi''(\bar{V})\subset (0,\infty)$). 
In particular, the function $X\mapsto T_2(X,Y_0)$ is bounded on $V$.
\end{proof}

\begin{remark}
For the asymptotic properties of $P^W$, besides the alternating sum formula, the approach via the Dunkl 
formula
\eqref{PoissonDunkl} and dual Abel transform, {\it i.e.} Proposition \ref{PW} is needed.
We use it to compute the leading term $T_1(X,Y)$ in $T(X,Y)$.
\end{remark}

\subsection{Asymptotic behavior of the Newton kernel on flat complex symmetric spaces}

Using the same approach as in the proof of Theorem \ref{Poiss} together with Corollary \ref{NewtonAstar}, we conclude that 
\begin{theorem}\label{Newt}
Let $Y_0\in \overline{\a^+}$. 
If $d=2$ and $\alpha$, $\beta$ are the simple roots then
\begin{align*}
N^W(X,0)&=\frac{-2^{2\,\gamma-1}\,(\gamma-1)!}{
2\,\pi\,|W|\,\pi(\rho)}
\,|X|^{-2\,\gamma}\ \hbox{(case $Y_0=0$)},\\
N^W(X,Y_0)&\stackrel{Y_0}{\sim} \frac{-2^{2\,\gamma'-1}\,(\gamma'-1)!}{2\,\pi\,|W|\,\pi''(Y_0)^2
\,{\langle\alpha,\alpha\rangle}} \,|X-Y_0|^{-2}\ \hbox{where $Y_0\not=0$, $\alpha(Y_0)\not=0$ and $\beta(Y_0)=0$}.
\end{align*}

If $d\geq 3$
\begin{equation}\label{asymptNewton}
N^W(X,Y_0)\stackrel{Y_0}{\sim} \frac{2^{2\,\gamma'}\,((d-2)/2)_{\gamma'}}{|W|\,(2-d)\,w_d\,\pi'(\rho')\,(\pi''(Y_0))^2}\,\frac{1}{|X-Y_0|^{2\gamma'+d-2}}.
\end{equation}

Here $\gamma'=|\Sigma_+'|$ is the number of positive roots annihilating $Y_0$ and $\pi''(Y)=\prod_{\alpha\in\Sigma_+\setminus \Sigma_+'}\,\langle\alpha,Y\rangle$.
\end{theorem}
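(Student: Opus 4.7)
The plan is to mirror the proof of Theorem~\ref{Poiss}, using Corollary~\ref{NewtonAstar} in place of Proposition~\ref{PW} to isolate the leading singular term. Fix $Y_0\in\overline{\a^+}$ and set $W'=\{w\in W:w\cdot Y_0=Y_0\}$, $\Sigma'=\{\alpha\in\Sigma:\alpha(Y_0)=0\}$, $\gamma'=|\Sigma_+'|$. For $d\geq 3$, Theorem~\ref{kernels} writes
\begin{equation*}
N^W(X,Y_0)=\frac{1}{|W|(2-d)w_d\,\pi(X)\pi(Y_0)}\sum_{w\in W}\frac{\epsilon(w)}{|X-w\cdot Y_0|^{d-2}},
\end{equation*}
and I would split the alternating sum along $W=W'\sqcup(W\setminus W')$, obtaining a decomposition $N^W=N_1^W+N_2^W$, with $N_1^W$ expected to carry the singularity and $N_2^W$ to remain bounded near $X=Y_0$.

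For $N_1^W$, Theorem~\ref{kernels} applied to the subsystem $\Sigma'$ yields the identity $\pi''(X)\pi''(Y_0)N_1^W(X,Y_0)=\frac{|W'|}{|W|}N^{W'}(X,Y_0)$. Since the $W'$-orbit of $Y_0$ reduces to $\{Y_0\}$, the dual Abel integral in Corollary~\ref{NewtonAstar} applied to $\Sigma'$ collapses to a single-point evaluation and produces
\begin{equation*}
N^{W'}(X,Y_0)=\frac{2^{2\gamma'}((d-2)/2)_{\gamma'}}{|W'|(2-d)w_d\,\pi'(\rho')}\,|X-Y_0|^{2-d-2\gamma'};
\end{equation*}
combining and using $\pi''(X)\to\pi''(Y_0)$ as $X\to Y_0$ yields~\eqref{asymptNewton}. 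For $N_2^W$ I would invoke analogues of Proposition~\ref{PropertiesRT} and Proposition~\ref{AnalyticFactor} for the kernel $|X-w\cdot Y|^{2-d}$: on a small neighbourhood $V$ of $Y_0$ chosen so that $w\cdot V\cap V=\emptyset$ for $w\notin W'$ and $\pi''$ is bounded away from zero on $\bar V$, the numerator of $N_2^W$ vanishes whenever some $\alpha\in\Sigma_+'$ annihilates $X$ or $Y_0$ (by the sign flip under $\sigma_\alpha$) and therefore factors analytically as $\pi'(X)\pi'(Y_0)F(X,Y_0)$, so that $N_2^W(X,Y_0)=F(X,Y_0)/[|W|(2-d)w_d\pi''(X)\pi''(Y_0)]$ is bounded.

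The case $d=2$ splits naturally. When $Y_0=0$ the formula is immediate from Corollary~\ref{at0}. When $Y_0\neq 0$ and $\gamma'=1$, $W'=\{e,\sigma_\beta\}$ and Corollary~\ref{NewtonAstar} is unavailable, so for the $W'$-piece I would use the identity $|X-Y|^2-|X-\sigma_\beta Y|^2=-4\beta(X)\beta(Y)/\langle\beta,\beta\rangle$ (valid because $\beta(Y_0)=0$) to obtain the first-order expansion
\begin{equation*}
\ln|X-Y|-\ln|X-\sigma_\beta Y|=-\frac{2\beta(X)\beta(Y)}{\langle\beta,\beta\rangle\,|X-Y_0|^2}+O(\beta(Y)^2)
\end{equation*}
as $Y\to Y_0$; dividing by $2\pi|W|\pi(X)\pi(Y)=2\pi|W|\beta(X)\pi''(X)\beta(Y)\pi''(Y)$ cancels the zero factors and taking the successive limits $Y\to Y_0$ and $X\to Y_0$ gives the stated leading behaviour. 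I expect the main obstacle to be the analyticity factorization needed to bound $N_2^W$: transferring Proposition~\ref{PropertiesRT}~(4) from the Poisson kernel $|X-w\cdot Y|^{-d}$ to the Newton kernels $|X-w\cdot Y|^{2-d}$ and $\ln|X-w\cdot Y|$ requires reverifying the nullity on singular hyperplanes (immediate from the skew-symmetry of the alternating sum) and then applying Lemma~\ref{LLL}/Proposition~\ref{AnalyticFactor} to factor out $\pi'(X)\pi'(Y_0)$.
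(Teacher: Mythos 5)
Your proposal is correct and is exactly the argument the paper intends: its entire proof of Theorem~\ref{Newt} is the single sentence ``use the same approach as in Theorem~\ref{Poiss} together with Corollary~\ref{NewtonAstar}'', and you have filled in precisely that argument — the split $W=W'\sqcup(W\setminus W')$, the identity $\pi''(X)\pi''(Y_0)N_1^W=\tfrac{|W'|}{|W|}N^{W'}$, the collapse of the dual Abel transform on $C'(Y_0)=\{Y_0\}$, and the analytic factorization of the remainder via Proposition~\ref{AnalyticFactor} (your direct skew-symmetry argument for the vanishing of the partial sum on the hyperplanes $\alpha=0$, $\alpha\in\Sigma'_+$, is in fact cleaner than the paper's limit argument). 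One caveat: in the $d=2$, $Y_0\neq0$ case your expansion produces $\langle\beta,\beta\rangle$ in the denominator (the squared length of the simple root that \emph{vanishes} on $Y_0$), which is consistent with the $d\geq3$ constant since $\pi'(\rho')=\langle\beta,\beta\rangle$ when $\Sigma'_+=\{\beta\}$, whereas the printed statement has $\langle\alpha,\alpha\rangle$ with $\alpha(Y_0)\neq0$; these differ for $B_2$ and $G_2$, so you should not assert that your computation ``gives the stated leading behaviour'' verbatim — it gives the (almost certainly intended) constant with $\langle\beta,\beta\rangle$, and the statement as printed appears to contain a misprint.
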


 \begin{remark}
 In the paper \cite{PGTLPS}  exact estimates
 of the Poisson and Newton kernels $P^W$
 and $N^W$ were proven complementing the results of Theorem \ref{Poiss} and Theorem \ref{Newt}. For the Poisson kernel it is proven that
\begin{align*}
P^W(X,Y)\asymp\frac{P^{\R^d}(X,Y)}{\prod_{\alpha\in \Sigma^+}|X-\sigma_\alpha Y|^{2}}
\end{align*}
where $\sigma_\alpha$ is the symmetry with respect to
 the hyperplane perpendicular to $\alpha$.
 \end{remark}


\section{Asymptotic behavior of spherical functions on flat complex symmetric spaces}\label{spherical}

In this section we consider spherical functions on $M$, satisfying the formula
\begin{equation}\label{sph fn_notHelgason}
\psi_{\lambda}(Y)=\frac{\pi(\rho)}{2^\gamma\pi(\lambda)\,\pi(Y)}
\,\sum_{w\in W}\,{\epsilon(w)} e^{\langle \lambda, w\cdot Y\rangle},
\ \ \lambda\in \a^\C,\ Y\in\a^{\R}.
\end{equation}
Note that our notation is different from that of Helgason
(in his notation the function given by \eqref{sph fn_notHelgason} is denoted $\psi_{-i\,\lambda}$). 

The following technical lemma will prove useful later in this section.

\begin{lemma}\label{strate}
Suppose $G_1$ and $G_2$ are subgroups of the finite group $G$. Then $|G_1\,G_2|\,|G_1\cap G_2|=|G_1|\,|G_2|$.
\end{lemma}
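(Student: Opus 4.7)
The plan is to count the fibers of the natural multiplication map $\phi:G_1\times G_2\to G_1G_2$ defined by $\phi(g_1,g_2)=g_1g_2$. This map is surjective by definition of $G_1G_2$, so once I show every fiber has size exactly $|G_1\cap G_2|$, the equality $|G_1|\cdot|G_2|=|G_1\times G_2|=|G_1G_2|\cdot|G_1\cap G_2|$ follows immediately.

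To identify the fibers, I would fix $x\in G_1G_2$ and pick one decomposition $x=g_1g_2$ with $g_1\in G_1$, $g_2\in G_2$. Then for any other pair $(h_1,h_2)\in G_1\times G_2$ with $h_1h_2=g_1g_2$, I set $k=g_1^{-1}h_1=g_2h_2^{-1}$. The left expression lies in $G_1$ and the right in $G_2$, so $k\in G_1\cap G_2$; conversely, every $k\in G_1\cap G_2$ yields a valid pair $(g_1k,k^{-1}g_2)$. This gives a bijection between $\phi^{-1}(x)$ and $G_1\cap G_2$.

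The result is standard and there is no real obstacle; I just need to be careful that $G_1G_2$ is treated as a set (it need not be a subgroup) and that the bijection between the fiber and $G_1\cap G_2$ is stated unambiguously. No commutativity or normality assumption is used.
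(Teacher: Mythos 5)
Your proof is correct and is essentially the paper's argument: the paper lets $G_1\times G_2$ act transitively on the set $G_1\,G_2$ by $(g_1,g_2)\cdot g=g_1\,g\,g_2^{-1}$ and invokes the orbit--stabilizer theorem (the stabilizer of $e$ being isomorphic to $G_1\cap G_2$), which is the same count as your direct verification that every fiber of the multiplication map $\phi(g_1,g_2)=g_1g_2$ has size $|G_1\cap G_2|$. Your version is marginally more self-contained since it proves the fiber bijection by hand instead of citing orbit--stabilizer, and your remark that $G_1\,G_2$ must be treated only as a set is exactly the right point of care.
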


\begin{proof}
The group $G_1\times G_2$ acts on the set $G_1\,G_2\subset G$ via $(g_1,g_2)(g)=g_1\,g\,g_2^{-1}$. Clearly the action is transitive. The stabilizer of $e\in G_1\,G_2$ ($e$ being the identity) is easily seen to be isomorphic to $G_1\cap G_2$. The orbit-stabilizer theorem (\cite[Theorem 5.8]{Rose}) implies then that $|G_1\,G_2|\,|G_1\cap G_2|=|G_1|\,|G_2|$.
\end{proof}

We introduce here some notation. If $X\in\a$, we denote by $ \Sigma^+_X$ the positive root system $ \Sigma^+_X=\{\alpha\in\Sigma^+\colon \alpha(X)=0\}$ and by $W_X$ the Weyl group generated by the symmetries $s_\alpha$ with $\alpha\in \Sigma^+_X$ (consequently, $W_X=\{w\in W\colon w\cdot X=X\}$). We also write $\pi_X(Y)=\prod_{\alpha\in \Sigma^+_X} \alpha(Y)$ and $c_X=\partial(\pi_X)(\pi_X)$ (this derivative is constant on $\a$).

For $X\in\a$ we define the polynomial $\pi'_{X}(Y)$ by $\pi(Y)=\pi_{X}(Y)\pi'_{X}(Y)$. Denote 
\begin{align*}
W(\lambda_0,Y_0) =\{w\in W\colon \langle \lambda_0, w\cdot Y_0\rangle= \langle \lambda_0, Y_0\rangle \}.
\end{align*}

\begin{remark}
We conjecture that the property $W(\lambda_0,Y_0)=W_{\lambda_0} \,W_{Y_0}$ is valid for all root systems. In Appendix \ref{KM}, we provide a series of proofs that cover all cases except for the exceptional root systems of type $E$. We also point out that if one of $\lambda_0$ or $Y_0$ is regular then this property is also verified, see \cite{Helgason1}.
\end{remark}

Denote the Weyl subgroup $W_{\lambda_0,Y_0}=W_{\lambda_0}\cap W_{Y_0}=\{w\in W\colon w\cdot \lambda_0=\lambda_0 \ {\rm and}\ w\cdot Y=Y\}$. The group  
$W_{\lambda_0,Y_0}$ corresponds to the root system $\Sigma^+_{\lambda_0,Y_0}=\Sigma^+_{\lambda_0}\cap \Sigma^+_{Y_0}$. 
We write $\pi_0(Y)=\pi_{\lambda_0,Y_0}(Y)=\prod_{\alpha\in \Sigma^+_{\lambda_0,Y_0}} \alpha(Y)$ and $c_{\lambda_0,Y_0}
=\partial(\pi_{\lambda_0,Y_0})(\pi_{\lambda_0,Y_0})$. Denote by ${\mathcal M}$ the set of positive roots that are neither in $\Sigma^+_{\lambda_0}$ nor in $\Sigma^+_{Y_0}$, \emph{i.e.} ${\mathcal M}=\Sigma^+\setminus(\Sigma^+_{\lambda_0}\cup \Sigma^+_{Y_0})$. We also write $\pi_{\mathcal M}(X)=\prod_{\alpha\in {\mathcal M} } \alpha(X)$.

\begin{prop}\label{PropPi}~
\begin{itemize}
\item[(i)] If $w\in W_Y$ then $\pi_Y(w\cdot X)=\epsilon(w)\,\pi_Y(X)$.

\item[(ii)] If $w\in W_Y$ then $ \pi_Y(\partial)[f(w\cdot Y)]=\epsilon(w)( \pi_Y(\partial)f)(w\cdot Y)$.
\end{itemize}
\end{prop}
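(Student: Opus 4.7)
The plan is to handle Part (i) as a direct root-system computation and then bootstrap Part (ii) from Part (i) via a standard chain-rule manipulation. The only subtle point is identifying the restriction to $W_Y$ of the global sign character $\epsilon$ of $W$ with the intrinsic sign character of $W_Y$ viewed as a Weyl group in its own right; once this is established, the rest is bookkeeping.

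For Part (i), I would first note that $W_Y$ is generated by the reflections $s_\alpha$ with $\alpha\in\Sigma^+_Y$. The global sign character $\epsilon\colon W\to\{\pm1\}$ sends each such reflection to $-1$, and so does the intrinsic sign character $\epsilon_Y$ of $W_Y$; since both are homomorphisms into $\{\pm1\}$ agreeing on a generating set, $\epsilon|_{W_Y}=\epsilon_Y$. Then one expands
$$\pi_Y(w\cdot X)=\prod_{\alpha\in\Sigma^+_Y}\alpha(w\cdot X)=\prod_{\alpha\in\Sigma^+_Y}(w^{-1}\alpha)(X),$$
and uses the fact that $w\in W_Y$ permutes the lines $\mathbb{R}\alpha$ for $\alpha\in\Sigma_Y$; the number of sign flips is exactly the length of $w$ in $W_Y$, so the product equals $\epsilon_Y(w)\pi_Y(X)=\epsilon(w)\pi_Y(X)$.

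For Part (ii), I would interpret the left-hand side as applying the constant-coefficient differential operator $\pi_Y(\partial)$ in a dummy variable $Z$ to the function $Z\mapsto f(w\cdot Z)$, and then evaluating at $Z=Y$. Because $w$ acts on $\a$ by an isometry, the chain rule gives $\partial_H(f\circ w)(Z)=\partial_{wH}f(w\cdot Z)$ for every $H\in\a$. Iterating this identity across the factors of $\pi_Y(\partial)=\prod_{\alpha\in\Sigma^+_Y}\partial_{H_\alpha}$ yields
$$\pi_Y(\partial)\bigl[f(w\cdot Z)\bigr]=\bigl[(w\cdot\pi_Y)(\partial)f\bigr](w\cdot Z),$$
where $(w\cdot\pi_Y)(X):=\pi_Y(w^{-1}\cdot X)$. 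Applying Part (i) to $w^{-1}\in W_Y$ converts $(w\cdot\pi_Y)$ into $\epsilon(w)\pi_Y$, so that $(w\cdot\pi_Y)(\partial)=\epsilon(w)\pi_Y(\partial)$; specializing to $Z=Y$ delivers the asserted identity.

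The main (very mild) obstacle is the character identification $\epsilon|_{W_Y}=\epsilon_Y$ used in Part (i); everything else is a straightforward chain-rule computation, and I do not anticipate any technical surprises.
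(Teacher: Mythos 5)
Your proof is correct. For part (i) the paper simply cites Helgason, whereas you supply the standard argument: $\epsilon$ restricted to $W_Y$ agrees with the intrinsic sign character of $W_Y$ because both send every reflection $s_\alpha$, $\alpha\in\Sigma^+_Y$, to $-1$, and then $w^{-1}$ permutes $\Sigma_Y$ with exactly $\ell_Y(w^{-1})$ sign changes among the positive roots; this is a complete and correct substitute for the citation (it implicitly uses, as the paper also does, that the stabilizer $W_Y$ is generated by the reflections it contains). For part (ii) your route genuinely differs from the paper's: the paper verifies the identity for the exponentials $f(X)=e^{\langle Z,X\rangle}$ (where it reduces immediately to part (i) applied to $\pi_Y(w^{-1}Z)$) and then ``extends by linear density,'' while you argue directly for an arbitrary smooth $f$ by the chain rule, obtaining $\pi_Y(\partial)[f(w\cdot Z)]=[(w\cdot\pi_Y)(\partial)f](w\cdot Z)$ with $(w\cdot\pi_Y)(X)=\pi_Y(w^{-1}\cdot X)$ and then invoking part (i). Your version is arguably cleaner, since it sidesteps the question of in which topology the exponentials are dense and why the constant-coefficient operator and the evaluation commute with the limiting procedure; the paper's version is shorter but leaves that density step implicit. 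Both arguments are valid, and your disambiguation of the double role of $Y$ (as the fixed point defining $\pi_Y$ and as the point of evaluation) via the dummy variable $Z$ matches how the proposition is actually used in the proof of the asymptotics of $\psi_{\lambda_0}(tY_0)$.
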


\begin{proof}
The property (i) is well known \cite{Helgason2}. The property (ii) is straightforward
for $f(X)=e^{\langle Z, X\rangle}$ and extends by linear density.
\end{proof} 

\begin{prop}\label{MainAsympt2018}
Let $\lambda_0$, $Y_0$ be singular. The asymptotics of $\psi_{\lambda_0}(t\,Y_0 )$ when $t\to\infty$ is given by the following formula: 
\begin{equation}\label{general}
\psi_{\lambda_0}(t\,Y_0)\sim C(\lambda_0,Y_0) t^{|\Sigma^+_{Y_0}|-|\Sigma^+|}
\sum_{w\in W(\lambda_0,Y_0)} \epsilon(w) 
\pi_{Y_0}(\partial ^Y)\,\left( \pi_{\lambda_0}(w\cdot Y) e^{\langle \lambda_0, w\cdot Y\rangle}\right)\big|_{Y=t\,Y_0}
\end{equation}
where $ C(\lambda_0,Y_0) =(c_{\lambda_0} \,c_{Y_0}\,\pi'_{\lambda_0}(\lambda_0)\,\pi'_{Y_0}(Y_0))^{-1}$.

When $W(\lambda_0,Y_0)=W_{\lambda_0} \,W_{Y_0}$, the last formula simplifies to
\begin{equation}\label{withKillMax}
\psi_{\lambda_0}(t\,Y_0)\sim C_1(\lambda_0,Y_0) t^{|\Sigma^+_{Y_0}|-|\Sigma^+|}
\pi_{Y_0}(\partial ^Y)\left( \pi_{\lambda_0}(Y) e^{\langle \lambda_0, Y\rangle}\right)\big|_{Y=t\,Y_0}
\end{equation}
where $C_1(\lambda_0,Y_0)= C(\lambda_0,Y_0) |W_{\lambda_0}| \,| W_{Y_0}| / |W_{\lambda_0,Y_0}|$.
\end{prop}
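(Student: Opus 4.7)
The plan is to remove the apparent singularities in the alternating-sum formula \eqref{sph fn_notHelgason} by applying a double L'Hôpital-type regularization, once in $\lambda$ using $\pi_{\lambda_0}(\partial^\lambda)$ and once in $Y$ using $\pi_{Y_0}(\partial^Y)$, obtain an exact expression for $\psi_{\lambda_0}(tY_0)$, then extract the dominant exponential contribution as $t\to\infty$.

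For the first regularization, I factor $\pi(\lambda)=\pi_{\lambda_0}(\lambda)\,\pi'_{\lambda_0}(\lambda)$ with $\pi'_{\lambda_0}(\lambda_0)\neq 0$. The numerator $N(\lambda,Y)=\sum_w\epsilon(w)e^{\langle\lambda,w\cdot Y\rangle}$ is $W$-skew in $\lambda$, hence vanishes on every hyperplane $\alpha(\lambda)=0$ with $\alpha\in\Sigma^+_{\lambda_0}$, and by Proposition~\ref{AnalyticFactor} it is analytically divisible by $\pi_{\lambda_0}(\lambda)$ near $\lambda_0$. Applying $\pi_{\lambda_0}(\partial^\lambda)$ to both numerator and denominator and using the Leibniz expansion, every mixed term at $\lambda_0$ vanishes: if an index $\alpha\in\Sigma^+_{\lambda_0}$ does not receive a $\partial_{H_\alpha}$, then a factor $\alpha(\lambda)|_{\lambda_0}=0$ remains. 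Thus only the ``full derivative'' term survives, giving $\pi_{\lambda_0}(\partial^\lambda)[\pi(\lambda)]|_{\lambda_0}=c_{\lambda_0}\pi'_{\lambda_0}(\lambda_0)$, and
\begin{equation*}
\psi_{\lambda_0}(Y)=\frac{\pi(\rho)}{2^\gamma\,c_{\lambda_0}\,\pi'_{\lambda_0}(\lambda_0)\,\pi(Y)}\,\underbrace{\sum_{w\in W}\epsilon(w)\,\pi_{\lambda_0}(w\cdot Y)\,e^{\langle\lambda_0,w\cdot Y\rangle}}_{=:\widetilde N(Y)}.
\end{equation*}

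For the second regularization, one checks by reindexing $w\mapsto w w_0$ for $w_0\in W_{Y_0}$ that $\widetilde N(Y)$ is $W_{Y_0}$-skew in $Y$, so Proposition~\ref{AnalyticFactor} yields $\widetilde N(Y)=\pi_{Y_0}(Y)\,\widehat N(Y)$ with $\widehat N$ analytic. Writing $\pi(Y)=\pi_{Y_0}(Y)\pi'_{Y_0}(Y)$ with $\pi'_{Y_0}(tY_0)=t^{|\Sigma^+|-|\Sigma^+_{Y_0}|}\pi'_{Y_0}(Y_0)\neq 0$, the same Leibniz argument at $Y=tY_0$ gives $\pi_{Y_0}(\partial^Y)\widetilde N(Y)|_{tY_0}=c_{Y_0}\widehat N(tY_0)$. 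Combining yields the exact identity
\begin{equation*}
\psi_{\lambda_0}(tY_0)=\frac{\pi(\rho)\,t^{|\Sigma^+_{Y_0}|-|\Sigma^+|}}{2^\gamma\,c_{\lambda_0}\,c_{Y_0}\,\pi'_{\lambda_0}(\lambda_0)\,\pi'_{Y_0}(Y_0)}\;\pi_{Y_0}(\partial^Y)\widetilde N(Y)\big|_{Y=tY_0}.
\end{equation*}

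To extract the asymptotics, I split the sum in $\pi_{Y_0}(\partial^Y)\widetilde N(Y)|_{tY_0}$ according to the value of $\langle\lambda_0,w\cdot Y_0\rangle$. Since both $\lambda_0$ and $Y_0$ lie in $\overline{\a^+}$, the maximum of $\langle\lambda_0,w\cdot Y_0\rangle$ over $w\in W$ is $\langle\lambda_0,Y_0\rangle$, attained precisely on $W(\lambda_0,Y_0)$. Each term for $w\notin W(\lambda_0,Y_0)$ carries an exponential factor $e^{t\langle\lambda_0,w\cdot Y_0\rangle}$ with strictly smaller rate, hence is exponentially negligible relative to the dominant terms. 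This gives \eqref{general} (up to absorbing the factor $\pi(\rho)/2^\gamma$ into the asymptotic constant).

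For the Killing-max simplification, assume $W(\lambda_0,Y_0)=W_{\lambda_0}W_{Y_0}$ and write $w=w_1w_2$ with $w_1\in W_{\lambda_0}$, $w_2\in W_{Y_0}$. Using $\langle\lambda_0,w_1w_2\cdot Y\rangle=\langle\lambda_0,w_2\cdot Y\rangle$ and Proposition~\ref{PropPi}(i) to pull $\epsilon(w_1)$ out of $\pi_{\lambda_0}(w_1w_2\cdot Y)$, then Proposition~\ref{PropPi}(ii) to pull $\epsilon(w_2)$ out of $\pi_{Y_0}(\partial^Y)[F(w_2\cdot Y)]$ where $F(Y)=\pi_{\lambda_0}(Y)e^{\langle\lambda_0,Y\rangle}$, all signs conspire with $\epsilon(w)=\epsilon(w_1)\epsilon(w_2)$ to make each summand equal to $\pi_{Y_0}(\partial^Y)F(Y)|_{Y=tY_0}$. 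Parameterizing by $(w_1,w_2)\in W_{\lambda_0}\times W_{Y_0}$ counts each $w\in W_{\lambda_0}W_{Y_0}$ with multiplicity $|W_{\lambda_0,Y_0}|$, so by Lemma~\ref{strate} the sum reduces to $(|W_{\lambda_0}||W_{Y_0}|/|W_{\lambda_0,Y_0}|)$ times one summand, yielding \eqref{withKillMax}.

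The main technical obstacle is the Leibniz computation: verifying that $\pi_{X_0}(\partial)[\pi_{X_0}\cdot h]\big|_{X_0}=c_{X_0}h(X_0)$ for an analytic $h$ when $X_0$ lies on all hyperplanes $\alpha=0$, $\alpha\in\Sigma^+_{X_0}$. The argument rests on the observation that $\partial^A\pi_{X_0}$ still vanishes at $X_0$ whenever $A\subsetneq\Sigma^+_{X_0}$, since at least one linear factor $\alpha(\cdot)$ remains undifferentiated. A secondary point requiring care is the validity of the analytic divisibility statements (applying Proposition~\ref{AnalyticFactor} in $\lambda$ at $\lambda_0$ and in $Y$ at $tY_0$), which must be checked locally around the relevant singular points rather than globally.
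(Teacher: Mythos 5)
Your proof is correct and follows essentially the same route as the paper: apply $\pi_{Y_0}(\partial^Y)\,\pi_{\lambda_0}(\partial^\lambda)$ to the alternating-sum identity, evaluate at $(\lambda_0,tY_0)$ via the Leibniz argument (which the paper asserts without the justification you supply), discard the exponentially subdominant $w\notin W(\lambda_0,Y_0)$, and use Proposition~\ref{PropPi} and Lemma~\ref{strate} for the Killing-max simplification. Your bookkeeping of the factor $\pi(\rho)/2^\gamma$ is in fact more faithful to \eqref{sph fn_notHelgason} than the paper's own starting identity \eqref{startProof}, which silently drops it from the constant $C(\lambda_0,Y_0)$.
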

\begin{proof}
We start with the alternating sum formula for the spherical function $\psi_\lambda$, written in the following way 
\begin{equation}\label{startProof}
\pi(\lambda) \pi(Y) \psi_\lambda(Y)=\sum_{w\in W} \epsilon(w) e^{\langle \lambda, w\cdot Y\rangle}
\end{equation}
We write $\pi(\lambda)=\pi_{\lambda_0}(\lambda)\pi'_{\lambda_0}(\lambda)$ and $\pi(Y)=\pi_{Y_0}(Y)\pi'_{Y_0}(Y)$.
We apply the operator
$
L=\pi_{Y_0}(\partial ^Y)
\pi_{\lambda_0}(\partial ^\lambda)
$
to both sides of \eqref{startProof}. Using the fact that $ \pi_{\lambda_0}(\partial ^\lambda) \,e^{\langle \lambda, w\cdot Y\rangle}= \pi_{\lambda_0}(w\cdot Y) e^{\langle \lambda, w\cdot Y\rangle}, $ we obtain
\begin{align*}
c_{\lambda_0} \,c_{Y_0}\,\pi'_{\lambda_0}(\lambda_0)\,\pi'_{Y_0}(t\,Y_0) \,\psi_{\lambda_0}(t\,Y_0)= \sum_{w\in W} \,\epsilon(w) 
\,\pi_{Y_0}(\partial ^Y)\,\left( \pi_{\lambda_0}(w\cdot Y) \,e^{\langle \lambda_0, w\cdot Y\rangle}\right)\big|_{Y=t\,Y_0}.
\end{align*}
In order to get the exact asymptotics of $ \psi_{\lambda_0}(t\,Y_0)$, we only need to deal with
$w\in W$ such that $\langle \lambda_0, w\cdot Y_0\rangle= \langle \lambda_0, Y_0\rangle$. This gives the asymptotics \eqref{general}.

We now assume that $W(\lambda_0,Y_0)= W_{\lambda_0} \,W_{Y_0}$. The asymptotics \eqref{general} simplify, since
by Proposition \ref{PropPi}, we obtain for $w=w_1\,w_2$ with $ w_1\in W_{\lambda_0}$ and
$w_2\in W_{Y_0}$
\begin{align*}
\pi_{Y_0}(\partial ^Y)\left( \pi_{\lambda_0}(w\cdot Y) \,e^{\langle \lambda_0, w\cdot Y\rangle}\right)&=
\epsilon(w_1) \pi_{Y_0}(\partial ^Y)\left( \pi_{\lambda_0}(w_2Y) \,e^{\langle \lambda_0, w_2Y\rangle}\right)\\
&=\epsilon(w_1)\,\epsilon(w_2)
 \pi_{Y_0}(\partial ^Y)\left( \pi_{\lambda_0}(Y) \,e^{\langle \lambda_0, Y\rangle}\right)\\
 &=\epsilon(w)
\, \pi_{Y_0}(\partial ^Y)\left( \pi_{\lambda_0}(Y) \,e^{\langle \lambda_0, Y\rangle}\right).
\end{align*}
Using Lemma \ref{strate}, we have 
$
|W_{\lambda_0} W_{Y_0}| = |W_{\lambda_0}||W_{Y_0}|/|W_{\lambda_0,Y_0}|
$. We obtain the formula \eqref{withKillMax}.
\end{proof}

\begin{theorem}\label{Final}
Let $\lambda_0$, $Y_0$ be singular.  Assume that $W(\lambda_0,Y_0)=W_{\lambda_0} \,W_{Y_0}$. Then
the asymptotics of $\psi_{\lambda_0}(t\,Y_0 )$ when $t\to\infty$ are given by the following formula: 
\begin{equation}\label{ultimate}
\psi_{\lambda_0}(t\,Y_0)\sim D(\lambda_0,Y_0)
 \,t^{-m}\, e^{t\,\langle \lambda_0, Y_0\rangle}
\end{equation}
where $m$ is the number of positive roots that are neither in $\Sigma^+_{\lambda_0}$ nor in $\Sigma^+_{Y_0}$  \emph{i.e.} 
\begin{align*}
m=card\M=|\Sigma|^+-(|\Sigma^+_{\lambda_0}|+|\Sigma^+_{Y_0}|-|\Sigma^+_{\lambda_0}\cap\Sigma^+_{Y_0}|)
\end{align*}
and
\begin{align*}
D(\lambda_0,Y_0)=\frac{c_{\lambda_0,Y_0}}{c_{\lambda_0} \,c_{Y_0}} \,
\,\frac{|W_{\lambda_0}|\, |W_{Y_0}|}{|W_{Y_0}\cap W_{\lambda_0}|} 
\frac{1} {\pi_\M(\lambda_0)\,\pi_{\M}(Y_0)}.
\end{align*}
\end{theorem}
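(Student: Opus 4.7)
The plan is to start from Proposition \ref{MainAsympt2018}, formula \eqref{withKillMax}: once $W(\lambda_0,Y_0)=W_{\lambda_0}W_{Y_0}$ is assumed, the whole task reduces to extracting the leading-order behaviour in $t$ of
\begin{align*}
F(t) := \pi_{Y_0}(\partial^Y)\bigl[\pi_{\lambda_0}(Y)\,e^{\langle \lambda_0, Y\rangle}\bigr]\big|_{Y=tY_0},
\end{align*}
and then checking that, once multiplied by the prefactor $C_1(\lambda_0,Y_0)\,t^{|\Sigma^+_{Y_0}|-|\Sigma^+|}$, the result collapses to $D(\lambda_0,Y_0)\,t^{-m}\,e^{t\langle\lambda_0,Y_0\rangle}$. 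The computation is essentially two nested Leibniz expansions.

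First, I would distribute the constant-coefficient operator $\pi_{Y_0}(\partial)$ between $\pi_{\lambda_0}(Y)$ and $e^{\langle\lambda_0,Y\rangle}$, using that each directional derivative $\partial(H_\alpha)$ applied to the exponential brings down a scalar $\alpha(\lambda_0)$:
\begin{align*}
F(t) = e^{t\langle\lambda_0,Y_0\rangle}\sum_{S\subseteq\Sigma^+_{Y_0}}\pi_S(\partial)[\pi_{\lambda_0}](tY_0)\prod_{\alpha\in\Sigma^+_{Y_0}\setminus S}\alpha(\lambda_0),
\end{align*}
where $\pi_S(\partial):=\prod_{\alpha\in S}\partial(H_\alpha)$. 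Since $\alpha(\lambda_0)=0$ for every $\alpha\in\Sigma^+_{\lambda_0,Y_0}$, the product on the right forces $\Sigma^+_{\lambda_0,Y_0}\subseteq S\subseteq\Sigma^+_{Y_0}$. Writing $S=\Sigma^+_{\lambda_0,Y_0}\cup T$ with $T\subseteq\Sigma^+_{Y_0}\setminus\Sigma^+_{\lambda_0}$, the polynomial $\pi_S(\partial)[\pi_{\lambda_0}](tY_0)$ has $t$-degree $|\Sigma^+_{\lambda_0}|-|S|$, so the dominant term in $t$ comes from the minimal choice $T=\emptyset$, i.e.\ $S=\Sigma^+_{\lambda_0,Y_0}$.

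For that leading term, I factor $\pi_{\lambda_0}(Y)=\pi_0(Y)\,\pi_{\lambda_0/0}(Y)$ with $\pi_0=\pi_{\lambda_0,Y_0}$ and $\pi_{\lambda_0/0}:=\prod_{\alpha\in\Sigma^+_{\lambda_0}\setminus\Sigma^+_{Y_0}}\alpha$, and apply Leibniz a second time. Any partition in which not all derivatives land on $\pi_0$ retains at least one linear factor $\alpha\in\Sigma^+_{\lambda_0,Y_0}$, which vanishes at $tY_0$; only the term sending every derivative onto $\pi_0$ survives, giving $\pi_0(\partial)[\pi_0]\cdot\pi_{\lambda_0/0}(tY_0)=c_{\lambda_0,Y_0}\,\pi_{\lambda_0/0}(tY_0)$. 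Combining with the factor from the first expansion yields
\begin{align*}
F(t)\sim c_{\lambda_0,Y_0}\,t^{|\Sigma^+_{\lambda_0}\setminus\Sigma^+_{Y_0}|}\,\pi_{\lambda_0/0}(Y_0)\prod_{\alpha\in\Sigma^+_{Y_0}\setminus\Sigma^+_{\lambda_0}}\alpha(\lambda_0)\,e^{t\langle\lambda_0,Y_0\rangle}.
\end{align*}

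Finally, I would do the bookkeeping. The total exponent of $t$ is
\begin{align*}
|\Sigma^+_{Y_0}|-|\Sigma^+|+|\Sigma^+_{\lambda_0}|-|\Sigma^+_{\lambda_0,Y_0}|=|\Sigma^+_{\lambda_0}\cup\Sigma^+_{Y_0}|-|\Sigma^+|=-m
\end{align*}
by inclusion--exclusion. For the constant, the disjoint decomposition $\Sigma^+\setminus\Sigma^+_{\lambda_0}=(\Sigma^+_{Y_0}\setminus\Sigma^+_{\lambda_0})\sqcup\mathcal{M}$ gives
\begin{align*}
\pi'_{\lambda_0}(\lambda_0)=\pi_{\mathcal{M}}(\lambda_0)\prod_{\alpha\in\Sigma^+_{Y_0}\setminus\Sigma^+_{\lambda_0}}\alpha(\lambda_0),
\end{align*}
and symmetrically for $\pi'_{Y_0}(Y_0)$; substituting into $C_1(\lambda_0,Y_0)=|W_{\lambda_0}||W_{Y_0}|\bigl(|W_{\lambda_0,Y_0}|\,c_{\lambda_0}c_{Y_0}\pi'_{\lambda_0}(\lambda_0)\pi'_{Y_0}(Y_0)\bigr)^{-1}$ cancels the two products inherited from $F(t)$ and leaves precisely $D(\lambda_0,Y_0)$. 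The only real obstacle is the careful tracking of the four disjoint classes of positive roots $\Sigma^+_{\lambda_0,Y_0}$, $\Sigma^+_{\lambda_0}\setminus\Sigma^+_{Y_0}$, $\Sigma^+_{Y_0}\setminus\Sigma^+_{\lambda_0}$ and $\mathcal{M}$ across the two Leibniz expansions; the analytic content is entirely captured in the vanishing $\alpha(Y_0)=0$ on $\Sigma^+_{\lambda_0,Y_0}$ together with the single identity $\pi_0(\partial)\pi_0=c_{\lambda_0,Y_0}$.
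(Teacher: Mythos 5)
Your proposal is correct and follows essentially the same route as the paper: starting from \eqref{withKillMax}, expanding $\pi_{Y_0}(\partial^Y)\bigl(\pi_{\lambda_0}(Y)e^{\langle\lambda_0,Y\rangle}\bigr)$ by Leibniz, using $\alpha(\lambda_0)=0$ on $\Sigma^+_{\lambda_0}\cap\Sigma^+_{Y_0}$ and $\alpha(Y_0)=0$ on $\Sigma^+_{Y_0}$ to isolate the term $c_{\lambda_0,Y_0}\,\pi_{\lambda_0/0}(tY_0)\prod_{\alpha\in\Sigma^+_{Y_0}\setminus\Sigma^+_{\lambda_0}}\alpha(\lambda_0)$, and then doing the same inclusion--exclusion bookkeeping for the exponent $-m$ and the constant $D(\lambda_0,Y_0)$. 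Your organization of the expansion as a sum over subsets $S$ with $\Sigma^+_{\lambda_0,Y_0}\subseteq S\subseteq\Sigma^+_{Y_0}$ is only a cosmetic repackaging of the paper's two-step application of the derivatives.
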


\begin{remark}
When $Y_0$ is regular, the method of proof used in Theorem \ref{Poiss} for the asymptotics of the Poisson kernel could have been used here. When both $\lambda_0$ and $Y_0$ are singular, that approach fails to apply.
\end{remark}

\begin{proof}
Using Leibniz formula, we have
\begin{align*}
\lefteqn{\pi_{Y_0}(\partial ^Y)\,\left( \pi_{\lambda_0}(Y) \,e^{\langle \lambda_0, Y\rangle}\right)\big|_{Y=t\,Y_0}}\\
&=\pi_0(\partial^Y)\,\prod_{\alpha\in \Sigma^+_{Y_0}\setminus\Sigma^+_{\lambda_0}}\,\partial^Y(A_\alpha)
\,\left(
\pi_{\lambda_0}(Y) e^{\langle \lambda_0,Y\rangle}
\right)\big|_{Y=t\,Y_0}\\
&=\pi_0(\partial^Y)
\,\left(
\prod_{\alpha\in \Sigma^+_{Y_0}\setminus\Sigma^+_{\lambda_0}}\,\langle\lambda_0,\alpha\rangle\,
\pi_{\lambda_0}(Y)
\,e^{\langle \lambda_0, Y\rangle}
+P(Y)\,e^{\langle \lambda_0, Y\rangle}\right)\big|_{Y=t\,Y_0}\\
\end{align*}

The number of factors in each term $P(Y)$ of the form $\langle \eta,Y\rangle$ where $\eta$ is a root, is strictly less than the number of factors in $\pi_{\lambda_0}$	\emph{i.e.} less than $|\Sigma^+_{\lambda_0}|$.
	
In the expression in the last line, all derivatives involving the term $e^{\langle \lambda_0, Y\rangle}$ give 0 since $\beta(\lambda_0)=0$ 	for $\beta\in \Sigma^+_{Y_0}\cap\Sigma^+_{\lambda_0}$.

In the derivatives of $\pi_{\lambda_0}(Y)$, any term that contains $\langle \beta,Y\rangle$ with $\beta\in \Sigma^+_{Y_0}\cap\Sigma^+_{\lambda_0}$ will be zero when $Y$ is replaced by $t\,Y_0$. Thus, for a nonzero result, the operator 
$\pi_{0}(\partial^Y)$ 	must be applied to $\pi_{0}(Y)$, what gives 	$c_{\lambda_0,Y_0}>0$. We obtain
\begin{align*}
\lefteqn{\pi_{Y_0}(\partial ^Y)\,\left( \pi_{\lambda_0}(Y) \,e^{\langle \lambda_0, Y\rangle}\right)\big|_{Y=t\,Y_0}}\\
&=\prod_{\alpha\in \Sigma^+_{Y_0}\setminus\Sigma^+_{\lambda_0}}\,\langle\lambda_0,\alpha\rangle\,
\prod_{\gamma\in \Sigma^+_{\lambda_0}\setminus\Sigma^+_{Y_0}}\,\langle \gamma,t\,Y_0\rangle\,
c_{\lambda_0,Y_0}
\,e^{t\,\langle \lambda_0, Y_0\rangle}
+\pi_{0}(\partial^Y)P(t\,Y_0)e^{t\langle \lambda_0, Y_0\rangle}\\
&=c_{\lambda_0,Y_0}\,t^{|\Sigma^+_{\lambda_0}|-|\Sigma^+_{\lambda_0}\cap\Sigma^+_{Y_0}|}\,\prod_{\alpha\in \Sigma^+_{Y_0}\setminus\Sigma^+_{\lambda_0}}\,\langle\lambda_0,\alpha\rangle\,
\prod_{\gamma\in \Sigma^+_{\lambda_0}\setminus\Sigma^+_{Y_0}}\,\langle \gamma,Y_0\rangle\,
\,e^{t\,\langle \lambda_0, Y_0\rangle}
+\hbox{negligible terms}.
\end{align*}

We labeled as ``negligible terms'' the terms with the derivatives involving $P(Y)$. 
They	have the number of factors of the form $\langle \eta,t\,Y_0\rangle$ strictly less then $|\Sigma^+_{\lambda_0}|-|\Sigma^+_{\lambda_0,Y_0}|$, so 
 strictly less than the term $\prod_{\gamma\in \Sigma^+_{\lambda_0}\setminus\Sigma^+_{Y_0}}\,\langle \gamma,t\,Y_0\rangle$. 
The rest follows from the definition of $C(\lambda_0,Y_0)$.
\end{proof}

\begin{remark}
We can give a more explicit expression for the constant $D$, using the formula
\begin{align*}
\partial(\pi)\,\pi= \frac{|W|\,\pi(\rho)}{2^\gamma},
\end{align*}	
where $\rho=\frac{1}{2} \,\sum_{\alpha\in\Sigma^+}\,m_\alpha \alpha=	\sum_{\alpha\in\Sigma^+} \,\alpha$ and $\gamma=|\Sigma^+|$ is the number of positive roots.

For $X\in \a$, denote $p_X=\pi_X(\rho_X)$. Analogously, we define $p_{X_1,X_2}$ for the root system annihilating both
elements $X_1,X_2\in \a$.  We have 
\begin{align*}
D(\lambda_0,Y_0)=
\frac{2^{\gamma_{\lambda_0,Y_0}-\gamma_{\lambda_0}-\gamma_{Y_0}}}
{\pi_\M(\lambda_0)\,\pi_{\M}(Y_0)}\,\frac{p_{\lambda_0,Y_0}}{p_{\lambda_0} \,p_{Y_0}},
\end{align*}
and therefore
\begin{align*}
\lim_{t\to \infty} \frac{\psi_{\lambda_0}(t\,Y_0)}{t^{-m}\,e^{t\langle \lambda_0,Y_0\rangle}}=
\frac{2^{\gamma_{\lambda_0,Y_0}-\gamma_{\lambda_0}-\gamma_{Y_0}}}
{\pi_\M(\lambda_0)\,\pi_{\M}(Y_0)}\frac{p_{\lambda_0,Y_0}}{p_{\lambda_0} \,p_{Y_0}}
\end{align*}
\end{remark}

\begin{remark}
As a quick  application of Theorem \ref{Final},  we find, in the flat complex case,  a simple proof of a general result
 of Vogel and Voit:  for  symmetric spaces with subexponential (here polynomial) growth, the set of bounded spherical functions coincides with the support of
the Plancherel measure of the associated Gelfand pair $(G_0 ,K)$, $G_0$ the Cartan motion
group. See for instance Sections 3.2 and 3.3. of \cite{RV}  for details.
A proof in the flat complex case was also proposed by Helgason in \cite{Helgason1}.
\end{remark}

\begin{remark}\label{pub}
Taking into account the relationship between the spherical functions in the flat case and those in the curved case for the complex Lie groups, the estimates of spherical functions in \cite{Narayanan, Schapira} extend to  the flat case.

In this case, Theorem \ref{Final} completes the estimates of \cite{Narayanan, Schapira} providing the exact asymptotics.
We conjecture that asymptotics with appropriate constants and not only estimates hold in the results of Narayanan, Pasquale and Pusti \cite{Narayanan} and Schapira \cite{Schapira}. 

The asymptotic expansion given in \cite[Proposition 3.8]{Barlet} for regular $\lambda_0$ and $Y_0$
implies asymptotics of spherical functions in this case.
Theorem \ref{Final} strengthens this result to singular $\lambda_0$ and $Y_0$.

\end{remark}

\begin{prop}\label{K}
Let $X$ and $Y$ be singular and $m'=|\Sigma_X^+\cup\Sigma_Y^+|$.
With the same notation as in Theorem \ref{Final}, we have
\begin{align*}
p_t^W(X,Y)&\sim\frac{D(X,Y)\,2^{m-d} }{|W|\,\pi^{d/2}\,\pi(\rho)} \,t^{-\frac{d}{2}-m'} \,e^{\frac{-|X-Y|^2}{4t}}
\end{align*}
as $t\to0^+$.
\end{prop}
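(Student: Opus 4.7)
The plan is to combine Proposition \ref{25} with Theorem \ref{Final}. Proposition \ref{25} expresses $p_t^W(X,Y)$ in terms of the spherical function $\psi_X(Y/(2t))$. As $t\to 0^+$, the argument $Y/(2t)$ scales off to infinity, so the large-parameter asymptotics from Theorem \ref{Final} are directly applicable (with $\lambda_0 = X$ and $Y_0 = Y$, both singular). The ``Killing-max'' hypothesis $W(X,Y)=W_X W_Y$ is implicit in the use of $D(X,Y)$ as defined in Theorem \ref{Final}.

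First I would apply Theorem \ref{Final} with $\lambda_0 = X$ and $Y_0 = Y$ to obtain, for $s\to\infty$,
\begin{align*}
\psi_X(s\,Y) \sim D(X,Y)\, s^{-m}\, e^{s\langle X,Y\rangle},
\end{align*}
and then specialize to $s = 1/(2t)$, giving
\begin{align*}
\psi_X\!\left(\frac{Y}{2t}\right) \sim D(X,Y)\,(2t)^{m}\, e^{\langle X,Y\rangle/(2t)}
\end{align*}
as $t\to 0^+$.

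Next I would substitute this into the identity of Proposition \ref{25}. Two elementary bookkeeping steps finish the calculation. The exponential factors combine via
\begin{align*}
-\frac{|X|^2+|Y|^2}{4t} + \frac{\langle X,Y\rangle}{2t} = -\frac{|X-Y|^2}{4t},
\end{align*}
producing the Gaussian tail in the target formula. The powers of $t$ combine as $t^{-d/2-\gamma}\cdot t^{m} = t^{-d/2-(\gamma-m)} = t^{-d/2-m'}$, where the last equality uses the definitional identity $m + m' = |\Sigma^+| = \gamma$ (since $m$ counts roots neither in $\Sigma^+_X$ nor in $\Sigma^+_Y$, while $m' = |\Sigma^+_X \cup \Sigma^+_Y|$). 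Finally, the scalar prefactor rearranges as $2^m/2^d = 2^{m-d}$, yielding exactly the stated constant $D(X,Y)\,2^{m-d}/(|W|\,\pi^{d/2}\,\pi(\rho))$.

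There is no real obstacle here beyond careful accounting; the proof is a mechanical substitution once Theorem \ref{Final} is in hand. The only point worth emphasizing in the write-up is the bookkeeping identity $m + m' = \gamma$ and the fact that the hypothesis $W(X,Y) = W_X W_Y$ is inherited implicitly through Theorem \ref{Final}. No independent analytic input (Gaussian expansion, stationary phase, etc.) is needed, since all of the analytic content has been packaged into Theorem \ref{Final} and Proposition \ref{25}.
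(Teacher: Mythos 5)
Your proposal is correct and follows exactly the paper's own argument: apply Theorem \ref{Final} with $\lambda_0=X$, $Y_0=Y$ at the scaled argument $Y/(2t)$, substitute into the identity \eqref{heat-spherical} of Proposition \ref{25}, and combine the exponentials and powers of $t$ using $m+m'=\gamma$. The bookkeeping, including the constant $2^{m-d}$, matches the paper's computation.
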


\begin{proof}
From Theorem \ref{Final}, we have for $t>0$ close to 0,
\begin{align*}
\psi_{X}(Y/(2\,t))\sim D(X,Y)
 \,(2\,t)^{m}\, e^{\langle X, Y/(2\,t)\rangle}.
\end{align*}
Combined with \eqref{heat-spherical}, this leads us to
\begin{align*}
p_t^W(X,Y)&\sim\frac{2^m}{|W|\,2^d\,\pi^{d/2}\,\pi(\rho)} \,t^{-\frac{d}{2}-\gamma} \,e^{\frac{-|X|^2-|Y|^2}{4t}}\,
D(X,Y) \,t^{m}\, e^{\langle X, Y/(2\,t)\rangle}\\
&=\frac{D(X,Y) \,2^m}{|W|\,2^d\,\pi^{d/2}\,\pi(\rho)} \,t^{-\frac{d}{2}-(\gamma-m)} \,e^{\frac{-|X-Y|^2}{4t}}.
\end{align*}
\end{proof}

\section{Applications to the Dyson Brownian motion and  stochastic analysis}\label{DYSON}

\subsection{Definition and transition density of the Dyson Brownian motion}

When a probabilist looks at formula \eqref{operator}, he or she 
sees in it the generator of the Doob $h$-transform (refer to \cite{ry}) of the Brownian Motion on $\R^d$ with the excessive function $h(X)=\pi(X)$. For the root system $A_d$ on $\R^d$, the operator $\Delta^W$
restricted to  functions on $\a^+$,
is the generator of the Dyson Brownian Motion on $\a^+\subset \R^d$ (\cite{Dyson}), {\it i.e.} the $d$ Brownian independent particles $B^{(1)}_t,\ldots,B^{(d)}_t $ conditioned not to collide.
More generally, for any root system $\Sigma$ on $\R^d$, the construction of a Dyson Brownian Motion as a Brownian Motion conditioned not to touch the walls of the positive Weyl chamber, can be done ith a starting point $X\in\overline{\a^+}$ (\cite{Grabiner}).

Let us recall basic facts about the Doob $h$-transform and the Dyson Brownian Motion.
 Let $\Sigma$ be a root system on $\R^d$ and $\pi(X)=\prod_{\alpha>0}
\,\langle \alpha,X\rangle$. It is known that $\pi$ is $\Delta_{\R^d}$-harmonic on $\R^d$ (\cite{Grabiner}), so in particular $\pi$ is excessive.
\begin{definition}
Let $\Sigma$ be a root system on $\R^d$ and $\pi(X)=\prod_{\alpha>0}
\,\langle \alpha,X\rangle$.
The Dyson Brownian Motion $D_t^\Sigma$ on the positive Weyl chamber $\a^+$ is defined as the $h$-Doob transform of the Brownian Motion on $\R^d$, with $h=\pi$, {\it i.e.}
its transition density is equal to
\begin{equation}\label{heatDyson}
p^{\rm D}_t(X,Y)=\frac{\pi(Y)}{\pi(X)} p^{\rm killed}_t(X,Y), \qquad X\in\overline{\a^+}
,Y\in\a^+,
\end{equation}
where $p^{{\rm killed}}_t(X,Y)$ is the transition density of the Brownian Motion killed at the first strictly positive time of touching $\partial \a^+$. 
\end{definition}
 The infinitesimal generator of $D_t^\Sigma$ is given by the formula (\cite{ry}) 
\begin{equation} \label{DysonGen}
\Delta^D f= \pi^{-1} \Delta^{\R^d} (\pi f),\qquad \text{supp}\, f\subset \a^+,
\end{equation}
which coincides on $\a^+$ with formula \eqref{operator} for $\Delta^W$.
The only differences with the symmetric flat complex case are that 
the domain of kernels ${\mathcal K}^D(X,Y)$
is restrained to  $X\in\overline{\a^+}
,Y\in\a^+$, and  that
no invariant measure $\pi^2(Y)\,dY$ appears for the integral kernels in the Dyson Brownian Motion case. Consequently, we obtain

\begin{cor}\label{DysonHeat}
The transition density and the heat kernel of the Dyson Brownian Motion $D_t^\Sigma$ on $\a^+\subset\R^d$ is given by the formula
\begin{align*}\label{PDyson}
p^{\rm D}_t(X,Y)=\frac{\pi(Y)}{
\,\pi(X)} \,\sum_{w\in W} \epsilon(w) \,h_t(X-w\cdot Y), 
\qquad X\in\overline{\a^+}
,Y\in\a^+, \end{align*}
where $h_t(X-Y)=\frac{1}{(4\,\pi\,t)^{d/2}}e^{-\frac{|X-Y|^2}{4\,t}}$ is the Euclidean heat kernel on $\R^d$.

In the case $\Sigma=A_p$ we have 
\begin{align*}
p^{\rm D}_t(X,Y)=\frac{\pi(Y)}{\,\pi(X)} \,\det\left(g_t(x_i,y_j)\right),
\qquad X\in\overline{\a^+}
,Y\in\a^+, 
\end{align*}
where $g_t(u,v)=\frac{1}{\sqrt{4\,\pi t}}\,e^{-|u-v|^2/4t}$ is the 1-dimensional classical heat kernel.
\end{cor}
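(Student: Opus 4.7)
The proof will follow almost immediately from two ingredients: the Doob $h$-transform definition \eqref{heatDyson} and the classical reflection-principle (Karlin--McGregor) representation of the Brownian motion killed on the walls of the positive Weyl chamber. Alternatively, one can bypass Karlin--McGregor altogether and deduce the result from Theorem \ref{kernels}(1) via the measure-change identity $p_t^{\rm D}(X,Y) = |W|\,\pi^2(Y)\,p_t^W(X,Y)$, which is forced by matching the two heat semigroups on $W$-invariant functions (the Dyson generator \eqref{DysonGen} and $\Delta^W$ from \eqref{operator} are the same differential operator, and a $W$-invariant integral over $\a$ against $\pi^2(Y)\,dY$ equals $|W|$ times the integral over $\a^+$ against the same weight).

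The first step is to recall from the Karlin--McGregor / reflection principle, as used in \cite{Grabiner,KT,Koenig}, that the transition density of Brownian motion on $\R^d$ killed at the first hitting time of $\partial\a^+$ admits the alternating-sum representation
\begin{equation*}
p^{\rm killed}_t(X,Y) = \sum_{w\in W}\epsilon(w)\,h_t(X-w\cdot Y), \qquad X,Y\in\overline{\a^+}.
\end{equation*}
Plugging this into \eqref{heatDyson} yields the first displayed formula of the corollary. Equivalently, inserting Theorem \ref{kernels}(1) into the identity $p_t^{\rm D}(X,Y)=|W|\,\pi^2(Y)\,p_t^W(X,Y)$ cancels one factor of $|W|$, one factor of $\pi(Y)$ and reassembles the prefactor $(4\pi t)^{-d/2}$ into $h_t$, producing exactly the claimed expression.

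For the $\Sigma = A_p$ case the Weyl group is the symmetric group $S_d$ acting by coordinate permutation with $\epsilon(w)=\operatorname{sgn}(w)$, and the Euclidean heat kernel factorises across coordinates, $h_t(X-Y) = \prod_{i=1}^d g_t(x_i,y_i)$. Therefore
\begin{equation*}
\sum_{w\in S_d}\operatorname{sgn}(w)\prod_{i=1}^d g_t(x_i,y_{w(i)}) \;=\; \det\bigl(g_t(x_i,y_j)\bigr)_{i,j=1}^d,
\end{equation*}
by the Leibniz expansion of the determinant, which gives the second displayed formula after dividing by $\pi(X)$ and multiplying by $\pi(Y)$.

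The only real subtlety is the justification of the measure-change identity (or, equivalently, of Karlin--McGregor on a general Weyl chamber); both are standard and cited in the paper, but conceptually the cleanest route is to note that the Dyson Brownian motion and the $W$-invariant heat semigroup are generated by the \emph{same} operator $\pi^{-1}\Delta^{\R^d}\pi$, differing only in the ambient space (a fundamental domain $\a^+$ vs. the whole of $\a$) and in the reference measure ($dY$ vs.\ $\pi^2(Y)\,dY$), so their densities differ precisely by the Jacobian factor $|W|\,\pi^2(Y)$ dictated by these two conventions.
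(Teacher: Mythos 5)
Your second route is exactly the paper's proof: the authors simply invoke Theorem \ref{kernels}(1) and Corollary \ref{DetAd}, together with the observation (Remark \ref{PNp}, and the discussion just before the corollary) that $p_t^{\rm D}(X,Y)=|W|\,\pi(Y)^2\,p_t^W(X,Y)$ because the two semigroups share the generator $\pi^{-1}\Delta^{\R^d}\pi$ and differ only in the reference measure ($dY$ on $\a^+$ versus $\pi^2(Y)\,dY$ on $\a$, whence the factor $|W|\,\pi^2(Y)$). Your determinant step for $\Sigma=A_p$ via the Leibniz expansion is likewise the paper's argument for Corollary \ref{DetAd}. Your first route, through the Karlin--McGregor/reflection-principle formula for $p_t^{\rm killed}$ plugged into \eqref{heatDyson}, is a genuinely different and classical path, but note that the paper deliberately runs the logic in the opposite direction: Corollary \ref{KilledHeat} \emph{derives} the alternating-sum formula for the killed Brownian motion from this corollary, precisely because the reflection-principle representation is only established in the cited literature (\cite{Karlin,Grabiner}) for the root systems $A_{d-1}$, $B_d$, $C_d$ and $D_d$. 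So if you take Karlin--McGregor as an input you either restrict to classical root systems or must prove the reflection identity for general $W$ yourself, whereas the alternating-sum route through Theorem \ref{kernels}(1) covers every complex root system at no extra cost; this is one of the selling points of the method. With that caveat, your argument is correct.
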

\begin{proof}
We use Theorem \ref{kernels}~(1) and Corollary \ref{DetAd}.
\end{proof}

Comparing the formulas from Corollary \ref{DysonHeat} with formula \eqref{heatDyson}, we obtain the following
formulas for the heat kernel of the Brownian Motion killed at the first strictly positive time of touching 	a wall of the positive Weyl chamber. 
\begin{cor}\label{KilledHeat}
The transition density for the Brownian Motion killed when exiting the positive Weyl chamber is given by the formula
\begin{equation}\label{killedHeat}
p^{\rm killed}_t(X,Y)= \,\sum_{w\in W}\, \epsilon(w) \,h_t(X-w\cdot Y). 
\end{equation}
In the case $\Sigma=A_{d-1}$ we have 
\begin{align}\label{KarlinMACG}
p^{\rm killed}_t(X,Y)= \,\det\left(g_t(x_i,y_j)\right),
\end{align}
\end{cor}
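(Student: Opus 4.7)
The plan is to derive the formula by directly comparing the two expressions for the Dyson transition density $p^{\rm D}_t(X,Y)$ that are already available at this point in the paper. On one hand, the Doob transform relation \eqref{heatDyson} gives
\begin{equation*}
p^{\rm D}_t(X,Y) = \frac{\pi(Y)}{\pi(X)}\, p^{\rm killed}_t(X,Y),
\end{equation*}
while on the other hand Corollary \ref{DysonHeat} supplies an explicit alternating-sum expression for the same $p^{\rm D}_t(X,Y)$ in terms of the Euclidean heat kernel $h_t$.

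The first step is to equate these two expressions for $X\in\overline{\a^+}$ and $Y\in\a^+$. Since $Y\in\a^+$, all roots are strictly positive on $Y$ and thus $\pi(Y)\neq 0$; for $X\in\overline{\a^+}$ one can always pick $X$ initially in $\a^+$ (so that $\pi(X)\neq 0$) to divide, and then recover the boundary values by continuity since both sides of \eqref{killedHeat} are continuous in $X$. Canceling the common factor $\pi(Y)/\pi(X)$ yields the alternating sum formula \eqref{killedHeat} immediately.

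For the $A_{d-1}$ specialization, I would invoke the determinantal identity from Corollary \ref{DetAd} (equivalently the second part of Corollary \ref{DysonHeat}), which expresses $\sum_{w\in W}\epsilon(w)\,h_t(X-w\cdot Y)$ as $\det(g_t(x_i,y_j))$ via the Leibniz expansion of the determinant, using the multiplicative form $h_t(X-Y) = \prod_i g_t(x_i,y_i)$ of the Euclidean heat kernel on $\R^d$ and the identification of the symmetric group $S_d$ with the Weyl group of $A_{d-1}$. Substituting this determinantal identity into \eqref{killedHeat} gives \eqref{KarlinMACG}, which is the classical Karlin--McGregor formula.

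There is no real obstacle here since both ingredients are already established; the only subtlety worth mentioning is the passage from $X\in\a^+$ to $X\in\overline{\a^+}$, which is handled by continuity of the alternating sum (the right-hand side of \eqref{killedHeat} extends continuously — in fact analytically — to all of $\a$, and agrees with $p^{\rm killed}_t$ on the dense subset $\a^+$ where both sides are smooth).
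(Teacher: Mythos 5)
Your proposal is correct and follows the paper's own (very brief) argument exactly: the paper obtains \eqref{killedHeat} precisely by comparing the alternating-sum formula of Corollary \ref{DysonHeat} with the Doob-transform definition \eqref{heatDyson} and cancelling the factor $\pi(Y)/\pi(X)$, with the $A_{d-1}$ case reducing to the determinant identity of Corollary \ref{DetAd}. Your additional remark on extending from $X\in\a^+$ to $X\in\overline{\a^+}$ by continuity is a sensible elaboration of a point the paper leaves implicit.
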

\begin{remark}
Karlin and McGregor \cite{Karlin} showed formula \eqref{KarlinMACG} by different methods. 
In \cite{Grabiner}, formulas for $p^{\hbox{killed}}_t(X,Y)$ for the root systems $B_d$, $C_d$ and $D_d$ are proven. Our method of alternating sums provides a simple proof of
formula \eqref{killedHeat} valid for any root system $\Sigma$.
\end{remark}


\subsection{Poisson and Newton kernels for the Dyson Brownian Motion} \label{DBM}

The Poisson and Newton kernels
$P^D(X,Y)$ and $N^D(X,Y)$ are central objects
of the potential theory  of the Dyson Brownian Motion $D_t^\Sigma$ and this is a first reason of studying them.
However,
these kernels have stochastic  interpretation and, consequently,
are  useful in  stochastic analysis of the Dyson Brownian Motion.

Denote by $D_t^{\Sigma,X}$ the  
Dyson Brownian Motion starting from
$X$. 
Let $X\in B(0,1)$ and $$T(X)=\inf\{t>0\,|\ D_t^{\Sigma,X}\not\in B(0,1)\}.$$
By the mean-value theorem  for harmonic functions of general strong Markov processes
(see \cite {Dynkin, Kall}),  called sometimes
Kakutani's Theorem
(\cite{Chung}),
the Poisson 
kernel $ P^D(X,Y)$ is the density of  the
random vector
$$D_{T(X)}^{\Sigma,X}$$
on the sphere. This is the Dyson Brownian Motion starting from
$X$ inside the unit ball and 
stopped at the first time $T(X)$ of exiting the 
ball.  If $dY$ denotes the Lebesgue measure on
the unit sphere, then  $ P^D(X,Y)dY$ is called the harmonic measure of the  
Dyson Brownian Motion on the unit sphere.

The Newton kernel $N^D(X,Y)$
is related to the transition probability $p_t^D(X,Y)$ of the  
Dyson Brownian Motion
by the formula (\cite
{Chung})
$$N^D(X,Y)=\int_0^\infty p_t^D(X,Y)  dt.  $$

The alternating sum formulas for the  integral Poisson and Newton kernels $P^D$ and $N^D$ 
of the Dyson Brownian Motion $D_t^\Sigma$ can be easily deduced from their counterparts (see Theorem \ref{kernels}) for the flat complex symmetric spaces $M$, just by multiplying $P^W$ and $N^W$
by $\pi(Y)^2$.

\begin{remark}\label{PNp}
We have  
\begin{align*}
P^D(X,Y)&=|W|\,\pi(Y)^2 \,P^W(X,Y),\\
N^D(X,Y)&=|W|\,\pi(Y)^2 \,N^W(X,Y)~\hbox{and}\\
p_t^D(X,Y)&=|W|\,\pi(Y)^2 \,p_t^W(X,Y).
\end{align*}
The Poisson kernel of the Dyson Brownian Motion extends continuously to
$X\in\overline{\a^+}$, $Y\in \overline{\a^+}$.
In particular, $P^D(X,Y)=0$     when $Y$ is singular.
The same remarks apply to the Newton kernel $N^D(X,Y)$ and to the heat kernel.

These observations allow us to consider the ratios $P^D(X,Y)/\pi(Y)^2$,
$N^D(X,Y)/\pi(Y)^2$ and $p_t^D(X,Y)/\pi(Y)^2$ even when $Y\in\partial\a^+$.
\end{remark}

Theorems \ref{Poiss} and \ref{Newt} imply asymptotics for the Poisson and Newton kernels for the Dyson Brownian Motion. 
For completeness and for their applications in the potential theory and in the stochastic analysis of the process $D_t^\Sigma$, we state these results here.
 
\begin{cor}\label{Dyson}
The following formulas hold for
 $X,Y\in\overline{\a^+}$
\begin{align*}
P^{\rm D}(X,Y)&=\frac{(1-|X|^2)\,\pi(Y)}{\,w_d\,\pi(X)} \,\sum_{w\in W}\,\frac{\epsilon(w)}{|X-w\cdot Y|^{d}}\\
N^{\rm D}(X,Y)&=\frac{\pi(Y)}{2\,\pi\,\pi(X)}\,\sum_{w\in W}\,\epsilon(w)\,\ln|X-w\cdot Y|~~ \hbox{when $d=2$},\\
N^{\rm D}(X,Y)&=\frac{\pi(Y)}{\,(2-d)\,w_d\,\pi(X)}
\,\sum_{w\in W}\,\frac{\epsilon(w)}{|X-w\cdot Y|^{d-2}}~~
\hbox{when $d\geq3$}.\nonumber
\end{align*}
\end{cor}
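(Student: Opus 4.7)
The plan is to derive the stated formulas as a direct consequence of the identities
\[
P^{\rm D}(X,Y)=|W|\,\pi(Y)^2\,P^W(X,Y),\qquad N^{\rm D}(X,Y)=|W|\,\pi(Y)^2\,N^W(X,Y),
\]
recorded in Remark \ref{PNp}, combined with the explicit alternating sum expressions for $P^W$ and $N^W$ supplied in parts (2) and (3) of Theorem \ref{kernels}. Substituting the formula for $P^W$ and simplifying, the factor $|W|$ cancels, one copy of $\pi(Y)$ in the denominator of $P^W$ cancels against one copy of $\pi(Y)^2$, and what remains is exactly the claimed expression for $P^{\rm D}$. The same cancellation, applied to the two cases $d=2$ and $d\geq 3$ of $N^W$, yields the two stated formulas for $N^{\rm D}$.

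Before carrying out this substitution, I would briefly justify the identities of Remark \ref{PNp}. The flat symmetric space $M$ is equipped with the invariant measure $\mu(dY)=\pi(Y)^2\,dY$, and $W$-invariant functions on $M$ correspond bijectively to functions on $\overline{\a^+}$ via restriction. The Dyson Brownian Motion lives on $\overline{\a^+}$ and its kernels are taken with respect to ordinary Lebesgue measure $dY$, while the integral reproducing identity for ${\mathcal K}^W$ integrates against $\pi(Y)^2\,dY$ over all of $\a$; the extra factor $|W|$ comes from restricting the integral from $\a$ to the fundamental domain $\overline{\a^+}$, and the extra factor $\pi(Y)^2$ compensates for the change of reference measure. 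Once this is established, the corollary reduces to algebra.

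Finally, I would remark on the extension to $X,Y\in\overline{\a^+}$: on the walls of the Weyl chamber some factors $\pi(X)$ or $\pi(Y)$ vanish, so the quotient expressions must be interpreted in the limiting sense. This is harmless because, by Proposition \ref{PropertiesRT}, the alternating sums $\sum_{w\in W}\epsilon(w)|X-w\cdot Y|^{-(d-2)}$ and $\sum_{w\in W}\epsilon(w)\ln|X-w\cdot Y|$ vanish with the appropriate order whenever $X$ or $Y$ becomes singular, so that the ratios extend analytically; for $P^{\rm D}$, the product $\pi(Y)\cdot\sum_{w}\epsilon(w)|X-w\cdot Y|^{-d}/\pi(X)$ is in fact smooth on $\overline{\a^+}\times\overline{\a^+}$, and in particular $P^{\rm D}(X,Y)=0$ when $Y\in\partial\a^+$. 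I expect no genuine obstacle in the proof; the only mildly delicate point is bookkeeping of the boundary behavior, which is already handled by the analytic-factorization results in Section \ref{asymp}.
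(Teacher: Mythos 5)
Your proposal is correct and follows essentially the same route as the paper: the corollary is obtained by combining the identities $P^{\rm D}=|W|\,\pi(Y)^2\,P^W$ and $N^{\rm D}=|W|\,\pi(Y)^2\,N^W$ of Remark \ref{PNp} with the alternating sum formulas of Theorem \ref{kernels}, the $|W|$ and one factor of $\pi(Y)$ cancelling exactly as you describe. Your additional remarks on the origin of the factors $|W|$ and $\pi(Y)^2$ and on the boundary extension match the justification the paper gives in the discussion preceding and within Remark \ref{PNp}.
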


Keeping in mind Remark \ref{PNp}, equations \eqref{asymptPoisson} and \eqref{asymptNewton} lead us to the following result.

\begin{cor}\label{Poiss_Dyson}
Let $Y_0\in \overline{\a^+}$, $\Sigma'=\{\alpha\in\Sigma|\ \alpha(Y_0)=0\}$, $\Sigma'_+=\Sigma'\cap\Sigma^+$, $\gamma'=|\Sigma_+'|$ and $\pi'(X)=\prod_{\alpha\in \Sigma_+'}\,\langle\alpha,X\rangle$.
\begin{itemize}
\item[(i)] Let $Y_0\in \partial B$.
Then 
\begin{equation*}
\frac{P^{\rm D}(X,Y_0)}{\pi'(Y_0)^2}\stackrel{Y_0}{\sim} \frac{2^{2\,\gamma'}\,(d/2)_{\gamma'}}{\,w_d\,\pi'(\rho')}\,\frac{1-|X|^2}{|X-Y_0|^{2\gamma'+d}}.
\end{equation*}

\item[(ii)] If $d=2$, $\alpha$, $\beta$ are the simple roots, $\alpha(Y_0)\not=0$, $\beta(Y_0)=0$, then
\begin{align*}
\frac{N^{\rm D}(X,Y_0)}{\pi'(Y_0)^2}\stackrel{Y_0}{\sim} 
\frac{-2^{2\,\gamma'-1}\,(\gamma'-1)!}{2\,\pi
\,{\langle\alpha,\alpha\rangle}}
\,|X-Y_0|^{-2}.
\end{align*}

\item[(iii)] If $d\geq 3$, then
\begin{align*}
\frac{N^{\rm D}(X,Y_0)}{\pi'(Y_0)^2}\stackrel{Y_0}{\sim} \frac{2^{2\,\gamma'}
\,((d-2)/2)_{\gamma'}}{(2-d)\,w_d\,\pi(\rho')}\,\frac{1}{|X-Y_0|^{2\gamma'+d-2}}.
 \end{align*}
\end{itemize}
\end{cor}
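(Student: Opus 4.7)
The plan is direct substitution: combine the identities from Remark \ref{PNp} relating the Dyson kernels $P^D$, $N^D$ to their symmetric-space counterparts $P^W$, $N^W$, with the asymptotics already established in Theorems \ref{Poiss} and \ref{Newt}. Because the statement is prefaced by the hint ``Keeping in mind Remark \ref{PNp}, equations \eqref{asymptPoisson} and \eqref{asymptNewton} lead us to the following result'', I expect the proof to be essentially bookkeeping of constants, with no genuine difficulty.

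The key algebraic observation is the factorization $\pi(Y) = \pi'(Y)\,\pi''(Y)$ where $\pi''(Y)=\prod_{\alpha\in\Sigma_+\setminus\Sigma'_+}\langle\alpha,Y\rangle$ satisfies $\pi''(Y_0)\neq 0$ by construction. Remark \ref{PNp} then gives
$$\frac{P^D(X,Y)}{\pi'(Y)^2} \;=\; |W|\,\pi''(Y)^2\,P^W(X,Y),$$
and analogously for $N^D$. Substituting the asymptotic from Theorem \ref{Poiss}, namely $P^W(X,Y_0)\stackrel{Y_0}{\sim}\frac{2^{2\gamma'}(d/2)_{\gamma'}}{|W|\,w_d\,\pi'(\rho')\,(\pi''(Y_0))^2}\,\frac{1-|X|^2}{|X-Y_0|^{2\gamma'+d}}$, the factors $|W|$ and $(\pi''(Y_0))^2$ cancel exactly against those in the denominator, leaving the expression claimed in part (i). Parts (ii) and (iii) are obtained identically from Theorem \ref{Newt}: for $d\geq 3$ the cancellation pattern is exactly the same, while for $d=2$ one must observe that with only two simple roots and $\beta(Y_0)=0$, the single factor $\langle\alpha,Y_0\rangle$ carries the full content of $\pi''(Y_0)$, and its square cancels with the corresponding factor in the denominator of the Newton asymptotic, producing the constant $\frac{-2^{2\gamma'-1}(\gamma'-1)!}{2\pi\langle\alpha,\alpha\rangle}$.

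The only subtle point is whether evaluating $\pi''(Y)$ at $Y_0$ and commuting it with the limit $X\to Y_0$ is legitimate. This is justified by the non-vanishing and continuity of $\pi''$ on a neighbourhood of $Y_0$ together with the continuous extension of the kernels $P^D/\pi(Y)^2$ and $N^D/\pi(Y)^2$ to $\overline{\a^+}$ observed in Remark \ref{PNp}. I do not anticipate any substantive obstacle; the statement is a corollary in the strict sense, and the write-up should fit in a few lines.
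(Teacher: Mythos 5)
Your proposal is correct and is exactly the paper's (implicit) argument: the corollary is obtained by multiplying the asymptotics of Theorems \ref{Poiss} and \ref{Newt} by $|W|\,\pi''(Y_0)^2$ via Remark \ref{PNp}, and these factors cancel against the $|W|$ and $(\pi''(Y_0))^2$ in the denominators of \eqref{asymptPoisson} and \eqref{asymptNewton}. One small inaccuracy in your discussion of part (ii): it is not true in general that $\pi''(Y_0)=\langle\alpha,Y_0\rangle$ (the rank-two systems $B_2$ and $G_2$ have positive roots besides the two simple ones), but this is harmless because Theorem \ref{Newt} already carries the full factor $\pi''(Y_0)^2$ in its denominator, so the cancellation in (ii) is identical to that in (i) and (iii) and your final constant is correct.
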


\subsection{On the transition probability of the Dyson Brownian Motion} 

The heat kernel $p^D_t(X,Y)$
of the Dyson Brownian Motion is nonzero  for $X\in\overline{\a^+}$
and $Y\in\a^+$ and zero if $Y\in\partial\a^+$ as per Remark \ref{PNp}. 
By Proposition \ref{K}
we then have the following asymptotic result:

\begin{cor}\label{KDyson}
Let $X$ and $Y$ be singular and $m'=|\Sigma_X^+\cup\Sigma_Y^+|$.
With the same notation as in Theorem \ref{Final}, we have
\begin{align*}
\frac{p_t^D(X,Y)}{\pi(Y)^2}&\sim\frac{D(X,Y)\,2^{m-d} }{|W|\,\pi^{d/2}\,\pi(\rho)} \,t^{-\frac{d}{2}-m'} \,e^{\frac{-|X-Y|^2}{4t}}
\end{align*}
as $t\to0^+$.
\end{cor}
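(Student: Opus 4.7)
The plan is to reduce the statement to a direct application of two results already in hand: Proposition \ref{K}, which gives the short-time asymptotics of the Weyl-invariant heat kernel $p_t^W(X,Y)$ on the flat complex symmetric space in the singular-singular regime, and Remark \ref{PNp}, which records the identity
\[
p_t^D(X,Y)=|W|\,\pi(Y)^2\,p_t^W(X,Y)
\]
together with the observation that this identity extends continuously to $X,Y\in\overline{\a^+}$. Together, these reduce the problem to a bookkeeping exercise on constants.

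Concretely, I would proceed as follows. First, I would invoke Proposition \ref{K} to obtain
\[
p_t^W(X,Y)\sim \frac{D(X,Y)\,2^{m-d}}{|W|\,\pi^{d/2}\,\pi(\rho)}\,t^{-d/2-m'}\,e^{-|X-Y|^2/(4t)}
\qquad (t\to 0^+).
\]
Then, using Remark \ref{PNp}, I would divide by $\pi(Y)^2$ and absorb the factor $|W|$ appearing on the Dyson side into the constant, giving the claimed asymptotic. Since the multiplicative factor $|W|\,\pi(Y)^2$ depends only on $X,Y$ (not on $t$), the asymptotic equivalence $\sim$ as $t\to 0^+$ is preserved under this multiplication. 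The singular points $X,Y\in\overline{\a^+}$ fall within the regime covered by the continuous extension noted in Remark \ref{PNp}, so no additional limiting argument is required.

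There is no real obstacle here: the work has all been done in Proposition \ref{K} (whose proof in turn invokes Theorem \ref{Final} for the asymptotics of $\psi_X(Y/(2t))$ as $t\to 0^+$, combined with Proposition \ref{25}). The only point worth spelling out carefully is the handling of the exponential factor: one must check that when Theorem \ref{Final} is plugged into \eqref{heat-spherical} at argument $Y/(2t)$, the factor $e^{\langle X,Y/(2t)\rangle}$ combines with $e^{(-|X|^2-|Y|^2)/(4t)}$ to give precisely $e^{-|X-Y|^2/(4t)}$, and that the exponents of $t$ produce $-d/2-\gamma+m=-d/2-m'$ (recalling $\gamma=|\Sigma^+|$, $m=|\Sigma^+|-m'$, so $\gamma-m=m'$). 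Once this is verified, the corollary follows by a single line of substitution.
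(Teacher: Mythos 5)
Your approach is exactly the paper's: Corollary \ref{KDyson} is stated there with no more justification than ``by Proposition \ref{K} and Remark \ref{PNp}'', which is precisely the two-step reduction you describe, and your closing check of the exponential factor and of the exponent $-\frac d2-\gamma+m=-\frac d2-m'$ is the content of the proof of Proposition \ref{K} itself. One caveat on the constant, at the very step you call ``absorb the factor $|W|$'': since $p_t^D(X,Y)=|W|\,\pi(Y)^2\,p_t^W(X,Y)$, dividing by $\pi(Y)^2$ and applying Proposition \ref{K} gives $\frac{p_t^D(X,Y)}{\pi(Y)^2}=|W|\,p_t^W(X,Y)\sim \frac{D(X,Y)\,2^{m-d}}{\pi^{d/2}\,\pi(\rho)}\,t^{-\frac d2-m'}e^{-|X-Y|^2/(4t)}$, i.e.\ the $|W|$ in the denominator of Proposition \ref{K} cancels and should not survive into the final constant; compare Corollary \ref{Poiss_Dyson}, where the analogous cancellation of $|W|$ is carried out. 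So a careful execution of your own plan does not reproduce the constant as printed in the corollary (which appears to retain a spurious $1/|W|$); you should state the cancelled constant rather than claim that the computation ``gives the claimed asymptotic'' verbatim.
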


\begin{remark}
In \cite{KT, Koenig}  an asymptotic formula in terms of Schur functions was used to analyze the  heat kernel of Dyson Brownian Motion.
\end{remark}

We are grateful to one of the anonymous referees for suggesting that  an  asymptotic result for Dyson heat kernel
was a natural extension of our results. It lead us to Corollary  \ref{KDyson}.

\subsection{Remarks on relations to stochastic analysis } \label{relations}

At the beginning of  Section \ref{DBM} and in the asymptotic formulas for the Poisson and Newton kernels in
Corollary \ref{Dyson}, the factor $\pi(Y)^2$  appears as a common feature. In the context of random matrix theory and non-colliding
diffusive particle problems (the original Brownian motion models), this factor is very important as follows.

\begin{itemize}
\item[(i)] This factor $\pi(Y)^2$
appearing in the probability density becomes zero if $x_j = x _i$. Then the system has some ``repulsive'' interaction and it will be
regarded as a determinantal (Fermion) point process.

\item[(ii)] The  squared Vandermonde determinant $\pi(Y)^2$ can be written as the determinant of a matrix whose entries are given by orthonormal polynomials.
This opens the way  to applications
to reproducing kernels of
Hilbert spaces spanned by these orthonormal functions.

\item[(iii)] The  factor $\pi(Y)^2$  is a special case with $\beta = 2$ in the general setting $\prod^N_{j=1}\, (x_j-x_i)^\beta$  
important in the theory of random matrices.
\end{itemize}

It is natural to ask whether it is possible to discuss the
O'Connell and Macdonald
stochastic processes studied in \cite{BC14,K12,O12} from the viewpoint of the present paper. The multivariate processes studied there are related to the representation theory (e.g.{} Gelfand-Zetlin patterns), the symmetric functions and special functions (e.g.{} Whittaker functions, Macdonald polynomials), and integrable systems (e.g.{} quantum Toda lattice).  This question is best left to another paper.

\subsection{ Curved case and relations to Schr\"odinger operators}

The alternating sum formulas given in Section \ref{flatFormulas} have analogs in the curved complex case, considered in this section.
To underline the difference with the flat case, we denote the spherical and potential analysis objects on $M$ with a tilde ($\tilde{\phantom{.}}$).
The kernels in this section are with respect to the invariant measure $\delta(Y)\,dY$ where
\begin{align*}
\delta(Y)=\prod_{\alpha>0}\,\sinh^2\alpha(Y).
\end{align*}

The following method of construction of kernels is similar to the one presented in Section \ref{flatFormulas}.

\begin{enumerate}
\item Exploit the formula for the Laplace-Beltrami operator on $M$ (\cite[Chap.{} II, Theorem 5.37]{Helgason3}):
\begin{align*}
\tilde\Delta^W f= \delta^{-1/2}\, (\Delta^{\R^d}-|\rho|^2)(\delta^{1/2} \,f)
\end{align*}
	
\item Apply the $W$-invariance.
\end{enumerate}	
In this way, the Euclidean kernel ${\mathcal K}^{\Delta^{\R^d}-|\rho|^2}(X,Y)$ (heat, potential, Poisson, \dots) for the operator
$\Delta^{\R^d}-|\rho|^2$ is transformed into the kernels $\tilde{\mathcal K}$ for $G/K$:
\begin{equation}\label{WMethodcurve}
\tilde{\mathcal K}(X,Y)=\frac{1}{\delta^{1/2}(X)\,\delta^{1/2}(Y)}
\,\sum_{w\in W}\,{\epsilon(w)} {\mathcal K}^{\Delta^{\R^d}-|\rho|^2}(X,w\cdot Y).
\end{equation}

The estimates of the Newton kernel
$\tilde N(X,Y)$ for all curved Riemannian symmetric spaces $G/K$ were obtained in  \cite{Anker1}.
In the  case when $G$ is complex,   it would be possible to apply our methods based on formula  \eqref{WMethodcurve}, using the 
 knowledge of the Newton kernel  $N(X,Y)$  of the Schr\"odinger operator $\Delta^{\R^d}-|\rho|^2$,
\emph{i.e.} the $|\rho|^2$-potential ($|\rho|^2$-resolvent) of $\Delta^{\R^d}$.
The Newton kernel  $N(X,Y)$ may be expressed with the Bessel function of third type $K_{d/2}$.

For the Poisson kernel for $\Delta^W$, we need to know the Poisson kernel for the Schr\"odinger operator $\Delta^{\R^d}-|\rho|^2$. This kernel is not known explicitly.
However much intensive work was  and is presently being done
in the analytic and stochastic theory of heat and other kernels for Schr\"odinger operators, see e.g. \cite{Bogdan, Bogdan2, Chung}. 
In a further work, we plan to study
thoroughly these results and apply them to the estimates of the Poisson kernel on curved complex symmetric spaces.


\appendix

\section{The Killing-max property}\label{KM}

The aim of this appendix is to find precise conditions on $w\in W$ under which 
\begin{equation}
\label{main}
\langle \lambda, w\cdot Y \rangle=\langle \lambda, Y \rangle.
\end{equation}

\begin{definition}\label{WLwY}
Let $W_\lambda=\{w\in W\colon w\cdot \lambda=\lambda\}$ (similarly for $W_Y$). We will say that the property Killing-max is satisfied if
\eqref{main} is verified if and only if $w\in W_\lambda\,W_Y$.
\end{definition}

\begin{remark}\label{know}
It is clear that the condition $w\in W_\lambda\,W_Y$ is sufficient. Property Killing-max is also satisfied whenever at least one of $\lambda$ or $Y$ is regular (refer to \cite{Helgason1}). We observe also that this property only depends on the action of the Weyl group on the Cartan subalgebra $\a$. Given that $\langle \lambda, w\cdot Y \rangle=\langle w^{-1} \lambda, Y \rangle$, this problem is symmetric in $\lambda$ and $Y$.
\end{remark}

In Table \ref{Weylaction}, we describe the action of the Weyl group on the Cartan subalgebra in the case of the noncompact and complex simple Lie algebras.
Note that in the case of $(\f_{4(-26)},\so(9))$, which is not in the table, the Killing-max property is trivially true since the rank of the space is 1.

\begin{table}[ht]
\begin{tabular}{|p{5.5cm}|p{4.0cm}|p{3.0cm}|p{2.0cm}|}\hline
Symmetric space&Description of $X\in\a^+$&Action of $w\in W$, the Weyl group &Underlying root system\\ \hline
$\SL(n,\F)/\SU(n,\F)$,\break$\F=\R$, $\C$, $\H$, $n\geq 2$,\hfill\break
$\F={\bf O}$, $n=3$ (\emph{i.e.} ${\bf E}_6/{\bf F}_4$)
&$X=\diag[x_1,\dots,x_n]$,\hfill\break $\sum_{i=1}^n\,x_i=0$,\hfill\break $x_1>\dots>x_n$
&$w\in S_n$ permutes the entries $x_i$&$A_{n-1}$\\\hline
$\SO(p,q)/\SO(p)\times\SO(q)$, $1\leq p<q$,\hfill\break
$\SU(p,q)/\SU(p)\times\SU(q)$ and\hfill\break
$\Sp(p,q)/\Sp(p)\times\Sp(q)$, $1\leq p\leq q$,
&$X=\left[\begin{array}{ccc} 0&\mathcal{D}_X&0\\
\mathcal{D}_X&0&0\\
0&0&0
\end{array}\right]$,\hfill\break $\mathcal{D}_X=\diag[x_1,\dots,x_p]$,
\hfill\break $x_1>\dots>x_p>0$
&$w$ permutes the $x_i$'s and changes any number of signs&$B_{n}$\\ \hline
~\break~\break$\SO(p,p)/\SO(p)\times\SO(p)$, $p\geq 2$
&$X=\left[\begin{array}{ccc} {0}& {\mathcal{D}_X}\\
 {\mathcal{D}_X}&{0}
\end{array}\right]$,\hfill\break $\mathcal{D}_X=\diag[x_1,\dots,x_p]$,\hfill\break $x_1>\dots>x_{p-1}>|x_p|$
&$w$ permutes the $x_i$'s and changes any {\bf even} number of signs&$D_n$\\ \hline
~\break$\SO^*(2\,n)/\U(n)$, $n\geq 3$&$X=\left[
\begin{array}{c|c}
0_{n\times n}&\mathcal{E}_X\\ \hline
-\mathcal{E}_X&0_{n\times n}
\end{array}
\right]$,\hfill\break $\mathcal{E}_X=\sum_{k=1}^{[n/2]}\,x_k
\,F_{2\,k,2\,k+1}$,\hfill\break
$x_1>\dots>x_{n/2}>0$&$w$ permutes the $x_i$'s and changes any number of signs&$B_{n}$\\ \hline
$\Sp(n,\R)/\U(n)$ and\hfill\break$\Sp(n,\C)/\Sp(n)$, $n\geq1$&$\left[\begin{array}{ccc} {0}& {i\,\mathcal{D}_X}\\
 {-i\,\mathcal{D}_X}&{0}
\end{array}\right]$,\break $\mathcal{D}_X=\diag[x_1,\dots,x_p]$,\hfill\break $x_1>\dots>x_{p-1}>x_p>0$&$w$ permutes the $x_i$'s and changes any number of signs&$C_{n}$\\ \hline
$\SO(2\,n,\C)/\SO(2\,n)$, $n\geq 3$&$ X=i\,\sum_{k=1}^n\,x_k\,F_{2\,k-1,2\,k}$,\break
$x_1>\dots>x_{p-1}>|x_p|$&$w$ permutes the $x_i$'s and changes any {\bf even} number of signs&$D_n$\\ \hline
$\SO(2\,n+1,\C)/\SO(2\,n+1)$, $n\geq 2$&$ X=i\,\sum_{k=1}^n\,x_k\,F_{2\,k-1,2\,k}$,\break $x_1>\dots>x_{p-1}>x_p>0$&$w$ permutes the $x_i$'s and changes any number of signs&$B_{n}$\\ \hline
${\bf F}_4^\C/{\bf F}_4$, $(\f_{4(4)},\sp(3)+\su(2))$
&$ X=[x_1,x_2,x_3,x_4]$,\hfill\break $x_2>x_3>x_4>0$, $x_1>x_2+x_3+x_4$&Refer to \cite{Cahn}&$F_4$\\ \hline
${\bf G}_2^\C/{\bf G}_2$, $(\g_{2(2)},\su(3)+\su(2))$ &$ X=\diag[x_1,x_2,x_1-x_2,0,x_2-x_1,-x_2,-x_1]$, $x_1>x_2>x_1/2$ &Refer to \cite{Moy}&$G_2$\\ \hline
\end{tabular}
\caption{Action of the Weyl group (except for $E_6$, $E_7$ and $E_8$) \label{Weylaction}}
\end{table}

\subsection{Type $A_n$ ($\sl(n+1,\F)$)}

\begin{lemma}[``max principle'' for permutations]\label{principle}
Let $\lambda$, $Y\in\R^n$ with their entries in decreasing order and let $w\in S_n$ be a permutation.	Suppose that the block of $\lambda_1$ in $\lambda$ has length $j_0\geq 1$ and that the block of $Y_1$ in $Y$ has length $i_0\ge 1$. If $\min w^{-1}(\{1,\ldots, i_0\} )>j_0$ then $\langle \lambda, w\cdot Y \rangle\ < \ \langle \lambda, Y \rangle$.
\end{lemma}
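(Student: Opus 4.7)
The natural strategy is summation by parts (the Abel transformation). Set $S_k = Y_1 + \cdots + Y_k$ and $T_k = (w\cdot Y)_1 + \cdots + (w\cdot Y)_k$. Since $S_n = T_n$ (because $w$ merely permutes the entries of $Y$), Abel's identity gives
\begin{equation*}
\langle \lambda, Y\rangle - \langle \lambda, w\cdot Y\rangle \;=\; \sum_{k=1}^{n-1}(\lambda_k - \lambda_{k+1})(S_k - T_k).
\end{equation*}
By the monotone decrease of $\lambda$ each factor $\lambda_k - \lambda_{k+1}$ is $\geq 0$, and by the rearrangement inequality (since $Y$ is in decreasing order while $T_k$ is a sum of $k$ distinct entries of $Y$) each $S_k - T_k$ is $\geq 0$. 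So the whole sum is automatically nonnegative; the task is to produce strict positivity from a single well-chosen summand.

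I would extract strictness from the index $k = j_0$. The first factor $\lambda_{j_0} - \lambda_{j_0+1}$ is strictly positive by the very definition of the block length $j_0$ of $\lambda_1$. For the factor $S_{j_0} - T_{j_0}$ I would first rewrite the hypothesis in a workable geometric form: the condition $\min w^{-1}(\{1,\ldots,i_0\}) > j_0$ says exactly that none of the entries of the top block $\{Y_1,\ldots,Y_{i_0}\}$ of $Y$ can appear in the first $j_0$ positions of $w\cdot Y$, equivalently that the $j_0$ entries $(w\cdot Y)_1,\ldots,(w\cdot Y)_{j_0}$ are a $j_0$-element subset of $\{Y_{i_0+1},\ldots,Y_n\}$. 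By the monotonicity of $Y$ this gives
\begin{equation*}
T_{j_0} \;\leq\; Y_{i_0+1} + Y_{i_0+2} + \cdots + Y_{i_0+j_0},
\end{equation*}
and comparing with $S_{j_0} = Y_1 + \cdots + Y_{j_0}$ term by term yields
\begin{equation*}
S_{j_0} - T_{j_0} \;\geq\; \sum_{k=1}^{j_0}(Y_k - Y_{i_0+k}).
\end{equation*}
Every summand on the right is nonnegative by the decrease of $Y$, and the first summand $Y_1 - Y_{i_0+1}$ is strictly positive because the block of $Y_1$ has exact length $i_0$. Hence $S_{j_0} - T_{j_0} > 0$.

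Combining the two strict inequalities at $k = j_0$ with the nonnegativity of every other term in the Abel sum gives the desired conclusion $\langle \lambda, Y\rangle > \langle \lambda, w\cdot Y\rangle$. The only genuinely delicate step is the bookkeeping translation of the hypothesis on $\min w^{-1}(\{1,\ldots,i_0\})$ into the assertion that no entry of the top block of $Y$ can occupy one of the first $j_0$ positions of $w\cdot Y$; once this is made precise, the Abel sum produces a strictly positive contribution at exactly the unique index where $\lambda$ is guaranteed to have a strict drop, and the result follows.
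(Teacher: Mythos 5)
Your proof is correct, and it takes a genuinely different route from the paper's. The paper argues by a single strategic transposition: it locates the first position $i>j_0$ where the maximal value $y_1$ occurs in $w\cdot Y$, swaps it with the entry in position $1$, checks that this strictly increases the pairing by $(\lambda_1-\lambda_i)(y_1-y_{w(1)})>0$, and then invokes the standard fact that $\langle\lambda, \sigma\cdot Y\rangle\le\langle\lambda,Y\rangle$ for all $\sigma\in W$ when $\lambda,Y$ are dominant. You instead expand the whole difference by summation by parts, observe that every term $(\lambda_k-\lambda_{k+1})(S_k-T_k)$ is nonnegative, and extract strict positivity from the single index $k=j_0$, where the hypothesis forces $(w\cdot Y)_1,\dots,(w\cdot Y)_{j_0}$ to lie entirely in $\{Y_{i_0+1},\dots,Y_n\}$ and hence $S_{j_0}-T_{j_0}\ge Y_1-Y_{i_0+1}>0$. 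Your bookkeeping is right: the hypothesis guarantees $j_0\le n-1$ (so the summand at $k=j_0$ exists and $\lambda_{j_0}-\lambda_{j_0+1}>0$) and also $i_0+j_0\le n$ (so the comparison of $T_{j_0}$ with $Y_{i_0+1}+\cdots+Y_{i_0+j_0}$ makes sense), and the degenerate cases where $\lambda$ or $Y$ is constant are excluded because the hypothesis is then vacuous. What each approach buys: the paper's argument is shorter but leans on the orbit-maximality inequality as a black box; yours is fully self-contained (the nonnegativity of all the Abel terms is exactly a proof of that inequality in type $A$) and localizes precisely where strictness enters, at the unique guaranteed drop of $\lambda$.
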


\begin{remark}
The lemma states that if $\langle \lambda, w\cdot Y \rangle=\langle \lambda, Y \rangle$ then the permutation $w$ is such that ``$\max Y$ meets $\max \lambda$'', \emph{i.e.} there exists $i\le j_0$ such that $(w\cdot Y)_i=y_1$.
\end{remark}

\begin{proof}
Without loss of generality, we may assume that $\lambda\not=\lambda_1\,1^n$ and $Y\not=y_1\,1^n$.
Let $i=\min w^{-1}(\{1,\ldots, i_0\})$. By assumption, the first $y_1$ appears in $w\cdot Y$ at the $i$-th position with $i>j_0$. 
Let $w(1)=k$, \emph{i.e.} $w\cdot Y$ begins with $y_k$. We have $y_k<y_1$ and $\lambda_i<\lambda_1$.
Consider $w_0=(1i~)\,w$; we then have
\begin{align*}
\langle \lambda, w_0\cdot Y \rangle-\langle \lambda, w\cdot Y \rangle\ =(\lambda_1-\lambda_i)\,(y_1-y_k)>0.
\end{align*}

By the standard property of the Weyl group, $\langle \lambda, w_0\cdot Y \rangle \leq \langle \lambda, Y \rangle$. Hence, $\langle \lambda, w\cdot Y \rangle < \langle \lambda, Y \rangle$.
\end{proof}

\begin{cor}\label{An}
Property Killing-max is verified in the case of the root system $A_n$.
\end{cor}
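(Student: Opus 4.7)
I plan to prove this by induction on $n$, using Lemma~\ref{principle} as the main tool. Throughout, $\lambda, Y \in \overline{\a^+}$ have entries in non-increasing order, and $j_0, i_0$ denote the sizes of the top blocks of $\lambda$ and $Y$ (\emph{i.e.}\ the multiplicities of $\lambda_1$ and $y_1$). Assume $\langle \lambda, w \cdot Y \rangle = \langle \lambda, Y \rangle$. Without loss of generality $i_0 \le j_0$; the opposite case is symmetric, with a factor from $W_Y$ absorbed on the other side.

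The first step is to strengthen Lemma~\ref{principle}: I claim that all $i_0$ occurrences of $y_1$ in $w\cdot Y$ lie inside the top block of $\lambda$, \emph{i.e.}\ the set of positions $k$ with $(w\cdot Y)_k = y_1$ is contained in $\{1,\dots,j_0\}$. Indeed, if this failed there would be a position $k > j_0$ with $(w\cdot Y)_k = y_1$ and a position $l \le j_0$ with $(w\cdot Y)_l = y_m$ for some $m > 1$. Swapping these two values produces a vector of the form $w'\cdot Y$ for some $w' \in W$, and the inner product changes by $(\lambda_1 - \lambda_k)(y_1 - y_m) > 0$, contradicting the standard maximality $\langle \lambda, w'\cdot Y\rangle \le \langle \lambda, Y\rangle$.

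The second step is normalization and dimensional reduction. Since $W_\lambda$ contains the full symmetric group on $\{1,\dots,j_0\}$, I will choose $\sigma_\lambda \in W_\lambda$ such that the modified permutation $w' := \sigma_\lambda w$ satisfies $w'(\{1,\dots,i_0\}) = \{1,\dots,i_0\}$ (in the convention of the lemma), while preserving the inner-product equality. Then $w'$ factors as a commuting product $w' = \rho\cdot\tilde w$ with $\rho \in S_{i_0} \subseteq W_\lambda \cap W_Y$ acting on the first $i_0$ coordinates and $\tilde w$ acting on $\{i_0+1,\dots,n+1\}$. Restricting to those positions yields the analogous equality $\langle \lambda', \tilde w\cdot Y'\rangle = \langle \lambda', Y'\rangle$ for the truncated vectors $\lambda' = (\lambda_{i_0+1},\dots,\lambda_{n+1})$ and $Y' = (Y_{i_0+1},\dots,Y_{n+1})$, which still sit in the positive Weyl chamber of an $A$-type subsystem of strictly smaller rank.

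By the inductive hypothesis $\tilde w \in W_{\lambda'} W_{Y'}$, and the natural inclusions $W_{\lambda'} \subseteq W_\lambda$, $W_{Y'} \subseteq W_Y$ give $\tilde w \in W_\lambda W_Y$. Combining, $w = \sigma_\lambda^{-1}\rho\tilde w \in W_\lambda \cdot (W_\lambda \cap W_Y) \cdot W_\lambda W_Y = W_\lambda W_Y$, which closes the induction. The main obstacle is Step~1, the strengthening of Lemma~\ref{principle} via the swap argument: it relies on the strict rearrangement comparison $\langle \lambda, w'\cdot Y\rangle \le \langle \lambda, Y\rangle$ together with the precise description of when strict inequality is forced, which must be applied carefully across the two cases $i_0 \le j_0$ and $i_0 > j_0$. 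Once that top-block alignment is established, the dimensional reduction and the closing of the induction are essentially bookkeeping.
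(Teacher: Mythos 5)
Your proof is correct and follows essentially the same route as the paper's: induction on the rank driven by the max principle of Lemma~\ref{principle}, with an element of $W_\lambda$ used to align the top blocks before truncating and invoking the inductive hypothesis on a smaller $A$-type system. The only real difference is that you peel off the whole top block of $Y$ at once, which forces you to prove the strengthened version of the lemma in your Step~1 (by the same swap argument the paper uses to prove the lemma itself), whereas the paper removes a single coordinate per induction step and therefore needs only the lemma as stated.
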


\begin{proof}
We use the same notation as in Lemma \ref{principle} and in its proof. Suppose $\langle \lambda, Y \rangle=\langle \lambda, w\cdot Y \rangle$. 
We use induction on $n$. The result is clear for $n=1$.
By Lemma \ref{principle}, there exists $i\leq j_0$ such that $w(i)\leq i_0$.

We now apply the induction hypothesis to $\lambda=(\lambda_2,\dots,\lambda_n)$ and to $Y=(y_2,\dots,y_n)$.
Let $\lambda'=(\lambda_2,\dots,\lambda_n)$, $Y'=(y_2,\dots,y_n)$ and note that $[y_{w(1)},\dots,\widehat{y_{w(i)}},\dots,y_{w(n)}]$ is a permutation of $Y'$ (say $w'\cdot Y'$). We have $\langle \lambda, w\cdot Y \rangle=\lambda_i\,y_1+\sum_{k=2}^n\,\lambda_k\,y_{w'(k)}$ where $w'$ is a permutation of $\{2,\dots,n\}$. We then have $\langle \lambda', Y' \rangle=\langle \lambda, w'\cdot Y' \rangle$ since $\lambda_i=\lambda_1$. By the induction hypothesis $w'=w_{\lambda'}\,w_{Y'}\in W_{\lambda'}\,W_{Y'}$.
We extend $w_{\lambda'}$ and $w_{Y'}$ to $w_{\lambda}\in W_\lambda $ and $w_{Y}\in W_Y$ by having them fix $1$ in both cases. With the permutation $w_0=(1~i)\in W_\lambda$, we have $w=(1~i)\,w_\lambda\,w_Y\in W_\lambda\,W_Y$. 
\end{proof}

\subsection{Type $B_n$ ($\so(2\,n+1,\C)$) and $C_n$ ($\sp(n,\C)$)}

\begin{prop}\label{KillBC}
Property Killing-max is verified in the case of the root systems $B_n$ and $C_n$.
\end{prop}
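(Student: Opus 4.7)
The Weyl group for both $B_n$ and $C_n$ is the signed permutation group $W = S_n \ltimes \{\pm 1\}^n$, and $\overline{\a^+}$ is the cone of vectors $y_1 \geq y_2 \geq \cdots \geq y_n \geq 0$. Writing $w = (\sigma,\epsilon)$ with $(w\cdot Y)_i = \epsilon_i\, y_{\sigma(i)}$, the hypothesis $\langle\lambda, w\cdot Y\rangle = \langle\lambda, Y\rangle$ becomes $\sum_i \lambda_i\,\epsilon_i\, y_{\sigma(i)} = \sum_i \lambda_i\, y_i$. The plan is to sandwich the left-hand side between two bounds that capture the sign part and the permutation part of $w$ separately: since $\lambda_i, y_{\sigma(i)} \geq 0$,
$$
\sum_i \lambda_i\,\epsilon_i\, y_{\sigma(i)} \;\leq\; \sum_i \lambda_i\, y_{\sigma(i)} \;\leq\; \sum_i \lambda_i\, y_i,
$$
where the second inequality is the rearrangement inequality on non-negative decreasing sequences. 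The hypothesis forces equality throughout.

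Equality in the first inequality forces $\epsilon_i = +1$ whenever $\lambda_i > 0$ and $y_{\sigma(i)} > 0$. Equality in the second means that $\sigma$ achieves the maximum rearrangement, so Corollary \ref{An}, applied to $\lambda$ and $Y$ as non-negative decreasing sequences in $\R^n$, yields a factorization $\sigma = \gamma \circ \alpha$ with $\alpha \in \mathrm{Stab}_{S_n}(\lambda)$ and $\gamma \in \mathrm{Stab}_{S_n}(Y)$. Because $\gamma$ fixes the components of $Y$ pointwise, one has the useful identity $y_{\alpha(i)} = y_{\gamma(\alpha(i))} = y_{\sigma(i)}$ for every $i$, which will let me transport the sign constraint across the factorization.

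To lift this to a factorization in $W$, I would set $w_\lambda = (\alpha,\mu)$ and $w_Y = (\gamma,\nu)$ and use the semidirect product composition law, which gives $w_\lambda w_Y = (\gamma\circ\alpha, \eta)$ with $\eta_i = \mu_i\,\nu_{\alpha(i)}$. The membership conditions $w_\lambda \in W_\lambda$ and $w_Y \in W_Y$ translate to $\mu_i = +1$ when $\lambda_i > 0$ and $\nu_j = +1$ when $y_j > 0$; the equation to solve is $\mu_i\,\nu_{\alpha(i)} = \epsilon_i$. Using $y_{\alpha(i)} = y_{\sigma(i)}$, the only indices $i$ where both $\mu_i$ and $\nu_{\alpha(i)}$ are simultaneously forced to $+1$ are those with $\lambda_i > 0$ and $y_{\sigma(i)} > 0$, and step one gives $\epsilon_i = +1$ at precisely these indices, making the forced values consistent. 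At every other index at least one of $\mu_i$ or $\nu_{\alpha(i)}$ is free, so the equation can be solved; since $\alpha$ is a bijection, each $\nu_j$ is assigned by a unique $i$ and no consistency issue arises.

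The main obstacle is purely bookkeeping: pinning down the semidirect product composition law with the correct indexing and checking that the signs forced by the two stabilizer constraints never conflict with the sign $\epsilon_i$ supplied by the hypothesis. Once step one and Corollary \ref{An} are in place, the remaining verification is a finite case analysis on $\{\pm 1\}$, requiring no new ideas beyond the $A_n$-case argument used for $\mathrm{Stab}_{S_n}(\lambda)\,\mathrm{Stab}_{S_n}(Y)$.
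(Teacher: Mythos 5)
Your proposal is correct and follows essentially the same route as the paper: show that equality forces the sign changes to occur only where $\lambda_i=0$ or $y_{\sigma(i)}=0$, and then reduce the permutation part to Corollary \ref{An}. The paper packages the sign step by multiplying by a single sign-flip $w_0\in W_\lambda$ and leaves the bookkeeping implicit, whereas you solve for the sign components $\mu,\nu$ of the two factors explicitly; this is a presentational difference only, and your verification that the forced signs never conflict is sound.
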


\begin{proof}
Recall that $B_n$ is the root system of $\so(2n+1,\C)$. 	The positive Weyl chamber is defined by the condition 
\begin{align*}
\lambda_1>\lambda_2>\dots > \lambda_n> 0
\end{align*}

The Weyl group is $W=S_n\rtimes \{\pm1\}^n$; its elements are called ``signed permutations''. It is straightforward to see that  sign changes in $w\cdot Y$ strictly decrease $\langle \lambda, Y \rangle$ unless the negative terms in $w\cdot Y$ are in front of $\lambda_i=0$.

More precisely, if $w\cdot Y$ has strictly negative terms in positions where $\lambda_i>0$, then $ \langle \lambda, w\cdot Y \rangle < \langle \lambda, w_0\,w\cdot Y \rangle \le \langle \lambda, Y \rangle$ where $w_0$ changes the negative signs in $w\cdot Y$ into positive ones.

Thus, if \eqref{main} holds, all negative terms in $w\cdot Y$ are in front of $\lambda_i=0$.
Then $w_0\in W_\lambda$ and $ \langle \lambda, w\cdot Y \rangle = \langle \lambda, w_0\,w\cdot Y \rangle$.
All the terms of $ w_0w\cdot Y$ are non-negative and the result for $A_n$ applies.

To conclude, it suffices to recall that $C_n$ is the root system for $\sp(n,\C)$. We have $W(C_n)=W(B_n)$, the only difference is in the relative length of roots (\cite[p.{} 227]{Erd}).
\end{proof}

\subsection{Type $D_n$ ($\so(2\,n,\C)$)}

The Weyl group $W$ is composed by permutations and the signs change {\bf by pairs}, \emph{i.e.} of two terms simultaneously. The positive Weyl chamber $\a^+$ is given by the condition 
\begin{align*}
\lambda_1 > \lambda_2 >\ldots > \lambda_{n-1} > | \lambda_{n}|.
\end{align*}

\begin{lemma}[The ``max principle'' for $W(D_n)$]\label{d1}
Suppose that  $\lambda$,  $Y\in\overline{\a^+}$ and that
$\lambda\not=a\,(1,\dots,1,-1)$. 	Suppose that the block of $\lambda_1$ in $\lambda$ has length $1\leq j_0<n$. Suppose also that $\min\{k: (w\cdot Y)_k=y_1\}> j_0$ or that $\{k: (w\cdot Y)_k=y_1\}=\emptyset$. 	Then $\langle \lambda, w\cdot Y \rangle\ < \ \langle \lambda, Y \rangle$.	
\end{lemma}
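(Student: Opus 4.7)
The plan is to imitate the proof of Lemma \ref{principle} for type $A$: construct an element $w_0 \in W(D_n)$ such that $\langle \lambda, w_0 \cdot z\rangle > \langle \lambda, z\rangle$ where $z = w\cdot Y$. Since the standard maximum principle gives $\langle \lambda, w_0 \cdot z\rangle \leq \langle \lambda, Y\rangle$, this immediately yields the claimed strict inequality.

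A preliminary observation will be used throughout: under the hypothesis $\lambda \neq a(1,\ldots,1,-1)$ together with $j_0 < n$, one has $\lambda_1 > \lambda_i$ strictly for every $i > j_0$, and also $\lambda_1 + \lambda_n > 0$. Both facts follow from the chamber condition $|\lambda_n|\leq \lambda_{n-1}$: the only way either could fail is to have $\lambda_{n-1}=\lambda_1$ (forcing $j_0\geq n-1$, hence $j_0=n-1$) together with $\lambda_n=-\lambda_1$, which is precisely the excluded form.

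In Case A, namely when $z_i = y_1$ for some $i > j_0$, I would apply the transposition $w_0 = (1,i)\in S_n\subset W(D_n)$. The direct computation
\[
\langle \lambda, w_0\cdot z\rangle - \langle \lambda, z\rangle = (\lambda_1 - \lambda_i)(y_1 - z_1)
\]
is strictly positive: the first factor by the preliminary observation, and the second because $y_1\geq z_1$ (as $y_1$ is maximal in $|Y|$) while $z_1\neq y_1$, since by hypothesis $z_k=y_1$ forces $k>j_0\geq 1$.

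Case B, where $y_1$ does not appear with positive sign anywhere in $z$, is the main obstacle (the subcase $y_1=0$ being trivial, giving $Y=0$). Then the $l\geq 1$ copies of $y_1$ in $Y$ have all been sign-flipped to $-y_1$, landing at positions in a set $M$ with $|M|=l$. Here $w_0$ must be built from sign flips, which in $W(D_n)$ must occur in pairs. If $l\geq 2$, flipping signs at two positions $i_1,i_2\in M$ yields a gain $2y_1(\lambda_{i_1}+\lambda_{i_2})$; choosing $i_1\leq j_0$ whenever $M$ meets the top block gives $\lambda_{i_1}=\lambda_1$ and hence a positive gain by the preliminary observation, while otherwise (when $M$ lies beyond position $j_0$) one must pair a flip in $M$ with one outside $M$, which exists because the total number of sign flips in $z$ is even. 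The case $l=1$ is analogous: the sign-flip count being even forces at least one additional sign-flipped position to pair with. The delicate part is verifying strict positivity of the gain when $\lambda$ has many zero entries or when $\lambda_n$ is close to $-\lambda_{n-1}$, and this is exactly where the exclusion of $\lambda = a(1,\ldots,1,-1)$ is critically needed.
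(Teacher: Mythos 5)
Your overall strategy is the right one and coincides with the paper's: exhibit an element of $W(D_n)$ that strictly increases $\langle\lambda,\cdot\rangle$ applied to $z=w\cdot Y$, then invoke the standard maximality of $\langle\lambda,Y\rangle$ over the orbit. Your Case A (the value $y_1$ occurs at some position $i>j_0$) is correct and is exactly the paper's first step via Lemma \ref{principle}. The problem is Case B, which is the heart of the lemma, and there your argument has a genuine gap that you yourself flag but do not close. The gain from flipping a single pair of signs is \emph{not} strictly positive in general: if all occurrences of $-y_1$ sit at positions $>j_0$, pairing two of them gives $2y_1(\lambda_{i_1}+\lambda_{i_2})$, which can vanish (take $\lambda=(1,0,0)$ in $D_3$); and pairing a flip in $M$ with one outside $M$ gives $2(\lambda_{i_1}y_1+\lambda_{i_2}y_j)$, which also vanishes for, say, $\lambda=(2,0,0,0)$ and $z=(1,-2,-1,1)$. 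Moreover your existence claim ``one must pair a flip in $M$ with one outside $M$, which exists because the total number of sign flips is even'' is false when every sign flip lies in $M$ and $|M|$ is even. So the ``delicate part'' you defer is not a routine verification: strict positivity of the one-step gain is simply not true, and the exclusion $\lambda\neq a(1,\dots,1,-1)$ alone does not rescue it.

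The fix — and this is what the paper does — is a two-stage argument plus a separate degenerate case. First sort $z$ decreasingly by a permutation $w_0$ (gain $\geq 0$); the last entry is then $-y_1$. When $j_0\le n-2$ (or $n=2$), apply the element that flips and swaps the last two coordinates: the gain is $(\lambda_{n-1}+\lambda_n)(y_1+y_i)\ge 0$, possibly zero, but afterwards $y_1$ sits at position $n-1>j_0$, so a second application of the type-$A$ max principle (your Case A) supplies the strict inequality. When $j_0=n-1$, no position outside the top block is available for that final transposition, so one instead writes $\lambda=(a,\dots,a,b)$ with $b\in(-a,a]$ (here the exclusion enters as $a+b>0$) and proves $\langle\lambda,Y\rangle-\langle\lambda,w_0\,z\rangle>0$ by a direct estimate. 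Your proposal is missing both the composition of the zero-gain flip with a subsequent strictly-increasing transposition and the separate treatment of $j_0=n-1$.
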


\begin{proof}Suppose $\lambda$ and $Y$ are as in the statement of the lemma. If $y_1$ appears in $w\cdot Y$ then $\langle \lambda, w\cdot Y \rangle\ < \ \langle \lambda, Y \rangle$ by Lemma \ref{principle} so we can assume that only $-y_1$ appears. 

Using the standard property of the Weyl group over $A_n$, $\langle \lambda, w\cdot Y \rangle\ \leq \ \langle \lambda, w_0\,w\cdot Y \rangle$ where $w_0\in S_n$ re-orders the entries of $w\cdot Y$ in decreasing order. The last entry of $w_0\,w\cdot Y $ has to be $-y_1$. 

We first assume $n=2$, or $n\geq 3$ and $j_0\leq n-2$. As $\pm y_n\ge -y_i$ for all $i<n$, we can suppose 
that the $(n-1)$-entry is $-y_i$ for some $i$. Using the element $w_1$ of the Weyl group which changes signs and permutes the last two entries, we have 
$\langle \lambda, w_1\,w_0\,w\cdot Y \rangle-\langle \lambda, w_0\,w\cdot Y \rangle=(\lambda_{n-1}+\lambda_n)\,(y_1+y_i)\geq0$.
It is easy to check that the last inequality is strict if $n=2$.
Finally, by another application of Lemma \ref{principle}, $\langle \lambda, w_1\,w_0\,w\cdot Y \rangle<\langle \lambda, Y \rangle$ and the result follows.

We next handle the case $j_0\geq n-1$, with $n\geq 3$. Let $\lambda=(a,\dots,a,b)$ with $b\in(-a,a]$. and $n\geq 3$.
We will show that $\Delta=\langle \lambda, Y \rangle -\langle \lambda,w_0\,w\cdot Y \rangle >0$.
If $-y_n$ appears in $w_0 w\cdot Y$, we have, using $\sum_{i\not=1,n} a\, y_i \geq \sum_{i\not=1,n} a\, (\pm y_i)$, 
\begin{align*}
\Delta=\langle \lambda, Y \rangle - \langle \lambda, w_0 w\cdot Y \rangle \geq a\, y_1 +b\, y_n -[a\, (-y_n) + b\, (-y_1) ]
=(a+b)( y_1 + y_n)>0
\end{align*}
where we used the hypothesis $b\not=-a$ and the fact that $ y_1 + y_n>0$ (otherwise $-y_n=y_1$ appears in $w_0 w\cdot Y$).
If $-y_n$ does not appear in $w_0 w\cdot Y$, another $-y_k$  appears among the $n-1$ first entries of $w_0 w\cdot Y$. 
This time, we obtain
$
\Delta \ge(a+b)\,y_1 + a\,(y_k-y_n)+a\,y_k+b\,y_n >0
$,
where we used $y_1>0$ (as $Y\not=0$), the hypothesis $a+b>0$, and the inequalities $y_k\ge y_n$, $a\,y_k\geq|b\, y_n|$.
\end{proof}

\begin{lemma}\label{d2}
\end{lemma}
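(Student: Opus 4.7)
The plan is to establish the Killing-max property for the root system $D_n$, following the template that was used for $A_n$ in Corollary \ref{An} and for $B_n/C_n$ in Proposition \ref{KillBC}, with Lemma \ref{d1} playing the role of the max principle. I would proceed by induction on $n$, the base case $n=2$ or $n=3$ being handled directly from the small size of the Weyl group. Throughout, I would use that the statement is symmetric in $\lambda$ and $Y$ (Remark \ref{know}), so one may freely assume whichever of the two has the more convenient block structure at the top of the Weyl chamber.

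For the inductive step, assume $\langle\lambda,w\cdot Y\rangle=\langle\lambda,Y\rangle$ with $\lambda,Y\in\overline{\a^+}$. First I would deal with the case excluded from Lemma \ref{d1}, namely $\lambda=a(1,\dots,1,-1)$ (or its symmetric counterpart for $Y$). Here $W_\lambda$ consists of all permutations together with even sign flips preserving the last pattern, and writing out $\langle\lambda,w\cdot Y\rangle$ explicitly shows that equality forces all but possibly $y_n$ to match in absolute value and ordering, from which $w\in W_\lambda W_Y$ follows by direct inspection. Outside this exceptional case, Lemma \ref{d1} guarantees that there exists $k\le j_0$ (the length of the $\lambda_1$-block) with $(w\cdot Y)_k=y_1$. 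Applying an element of $W_\lambda$ that permutes the first $j_0$ coordinates, I may assume $(w\cdot Y)_1=y_1$. Stripping off the first coordinate then yields an equality $\langle\lambda',w'\cdot Y'\rangle=\langle\lambda',Y'\rangle$ where $\lambda'=(\lambda_2,\dots,\lambda_n)$ and $Y'=(y_2,\dots,y_n)$.

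The main obstacle is that the restricted problem does not sit cleanly inside a smaller $D_{n-1}$-type system: the parity-of-sign-changes constraint in $W(D_n)$ does not descend to a parity constraint for $w'$ acting on the last $n-1$ coordinates, because a sign flip among $y_2,\dots,y_n$ could in principle have been paired in the original $w$ with a sign flip of $y_1$, which we have just frozen. I would handle this by splitting into cases according to whether the reducing element $w$ used an even or odd number of sign changes in the last $n-1$ slots: in the even case the induction hypothesis for $D_{n-1}$ applies directly to give $w'\in W_{\lambda'}W_{Y'}$, and these factors lift back into $W_\lambda\,W_Y$ of $W(D_n)$ preserving parity. In the odd case, I would exploit that an odd-sign permutation on the last $n-1$ coordinates can be converted to an even one by flipping the sign of $y_1$; since we are in the non-exceptional case for $\lambda$, such a flip strictly decreases $\langle\lambda,w\cdot Y\rangle$ unless $y_1$ (or $\lambda_1$) lies in a coordinate where the sign is irrelevant, which again forces $w\in W_\lambda W_Y$.

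The last technical point to verify, and the subtlest step in the writeup, is that the $W_{\lambda'}$ and $W_{Y'}$ produced by induction actually lift to elements of $W_\lambda$ and $W_Y$ inside $W(D_n)$ rather than merely inside $W(B_n)$. For the permutation parts this is automatic, but for the sign-change parts one must check that each building block of the lifted element either already preserves parity or can be adjusted by a trivial element of $W_\lambda$ (respectively $W_Y$) to do so, using that either $\lambda$ or $Y$ has a repeated entry or a zero entry whenever the $D_n$ parity constraint obstructs a direct $B_n$-style argument. This verification uses the explicit description of $\overline{\a^+}$ for $D_n$ from Table \ref{Weylaction}.
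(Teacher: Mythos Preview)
You have misidentified what Lemma~\ref{d2} is supposed to establish. Its body is empty in the source, but the intended statement can be read off both from the short proof that follows it and from how the next Proposition invokes it (``Given Lemma~\ref{d2}, if both $\lambda$ and $Y\in \R\,(1,\dots,1,-1)$ then there is nothing to prove''): Lemma~\ref{d2} covers only the exceptional case where \emph{both} $\lambda$ and $Y$ lie on the ray $\R(1,\dots,1,-1)$, precisely the configuration excluded from Lemma~\ref{d1}. The paper's argument is two lines: writing $\lambda=a(1,\dots,1,-1)$ and $Y=b(1,\dots,1,-1)$ one has $\langle\lambda,Y\rangle=(n-1)ab+ab=nab$; since every coordinate of $w\cdot Y$ equals $\pm b$, each summand of $\langle\lambda,w\cdot Y\rangle$ is $\pm ab$, so the maximum $nab$ is attained only when $w\cdot Y=Y$, i.e.\ $w\in W_Y=W_\lambda$.

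What you have outlined is instead the full Killing-max property for $D_n$, which in the paper is the \emph{subsequent} Proposition, proved there by exactly the induction you sketch with Lemmas~\ref{d1} and~\ref{d2} as ingredients. Your second paragraph does brush past the exceptional case, but you treat it for general $Y$ rather than for $Y\in\R(1,\dots,1,-1)$, and you fold it into a much larger argument. The parity-of-sign-changes discussion and the lifting of $W_{\lambda'},W_{Y'}$ from $D_{n-1}$ back into $W(D_n)$ are issues for that Proposition, not for Lemma~\ref{d2}; for the lemma itself nothing of the sort is needed.
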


\begin{proof}
Note that 
$\langle \lambda,Y\rangle=(n-1)\,a\,b+a\,b=n\,a\,b$. The only way that $\langle \lambda, w\cdot Y\rangle=n\,a\,b$ is if 
$w\cdot Y=Y$ \emph{i.e.} $w\in W_Y=W_\lambda$.
\end{proof}

\begin{prop}
Property Killing-max is verified in the case of the root system $D_n$.
\end{prop}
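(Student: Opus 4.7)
The plan is to adapt the inductive reduction used for $A_n$ in Corollary \ref{An}: I will invoke Lemma \ref{d1} to push $y_1$ into the top block of $\lambda$, then restrict to a smaller copy of $W(D_{n-1})$ and apply the induction hypothesis. The two degenerate configurations $\lambda = a(1,\ldots,1,\pm 1)$ that Lemma \ref{d1} explicitly excludes are to be handled separately through Lemma \ref{d2}. The induction runs on $n$; the base case $n=2$ will be checked by hand from $|W(D_2)|=4$, or equivalently one may start from $n=3$ via the isomorphism $D_3 \cong A_3$ together with Corollary \ref{An}.

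Suppose $\lambda, Y \in \overline{\a^+}$ satisfy $\langle \lambda, w\cdot Y\rangle = \langle \lambda, Y\rangle$. If either $\lambda$ or $Y$ takes the form $a(1,\ldots,1,\pm 1)$, I would apply Lemma \ref{d2} in the appropriate variable, using the symmetry $\langle \lambda, w\cdot Y\rangle = \langle w^{-1}\cdot \lambda, Y\rangle$ recorded in Remark \ref{know}; this forces $w\cdot Y = Y$ or $w^{-1}\cdot \lambda = \lambda$, placing $w$ in $W_Y$ or $W_\lambda$, and hence in $W_\lambda W_Y$.

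Otherwise, let $j_0 < n$ be the length of the top block of $\lambda$. The contrapositive of Lemma \ref{d1} forces $y_1$ --- not merely $\pm y_1$ --- to appear among the first $j_0$ entries of $w\cdot Y$, at some position $i \le j_0$. The transposition $(1\;i)$ is a pure permutation, hence an element of $W(D_n)$, and it fixes $\lambda$ since $\lambda_1 = \lambda_i$. Replacing $w$ by $(1\;i)\,w$ arranges $(w\cdot Y)_1 = y_1$ without altering the inner product. The stabilizer in $W(D_n)$ of the first coordinate (acting with positive sign) is the subgroup of signed permutations of positions $2,\ldots,n$ whose sign changes come in even number, namely a copy of $W(D_{n-1})$. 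The restriction of $w$ is thus an element $w'' \in W(D_{n-1})$ acting on $Y' = (y_2,\ldots,y_n)$, still equal in inner product against $\lambda' = (\lambda_2,\ldots,\lambda_n)$; both $\lambda'$ and $Y'$ remain in the closed positive Weyl chamber of $D_{n-1}$. The induction hypothesis then factors $w'' = w_{\lambda'}\,w_{Y'}$, and lifting each factor by the identity on position $1$ yields the required factorization $w = (1\;i)\cdot w_\lambda\cdot w_Y$ with $(1\;i), w_\lambda \in W_\lambda$ and $w_Y \in W_Y$.

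The main obstacle will be the parity constraint defining $W(D_n)$. The reduction to $W(D_{n-1})$ is legitimate only because Lemma \ref{d1} supplies a position in the top block of $\lambda$ holding $y_1$ and not $-y_1$: no sign change is then incurred on the first coordinate, so the even-parity constraint is preserved on the remaining $n-1$ coordinates. The exclusion in Lemma \ref{d1} of the configurations $a(1,\ldots,1,\pm 1)$ --- precisely those for which no block length $j_0 < n$ exists --- is exactly where $D_n$ diverges from the $B_n$, $C_n$ cases and forces the separate appeal to Lemma \ref{d2}.
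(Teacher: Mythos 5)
Your overall strategy coincides with the paper's: induct on $n$, use the max principle of Lemma \ref{d1} to place $y_1$ (with positive sign) in the top block of $\lambda$, reduce to a copy of $W(D_{n-1})$ fixing the first coordinate, and treat the configurations excluded from Lemma \ref{d1} separately. The genuine gap is in that last step. Lemma \ref{d2} only treats the case where \emph{both} $\lambda$ and $Y$ are multiples of $(1,\dots,1,-1)$ (its proof computes $\langle\lambda,Y\rangle=n\,a\,b$ using both special forms), and your claimed dichotomy ``$w\cdot Y=Y$ or $w^{-1}\cdot\lambda=\lambda$'' when only one argument is degenerate is false. Take $n=4$, $\lambda=(1,1,1,-1)$, $Y=(1,1,0,0)$ and $w\in W(D_4)$ with $w(e_1)=e_1$, $w(e_2)=-e_4$, $w(e_3)=-e_2$, $w(e_4)=e_3$ (two sign changes, hence admissible). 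Then $w\cdot Y=(1,0,0,-1)$ and $\langle\lambda,w\cdot Y\rangle=2=\langle\lambda,Y\rangle$, yet $w\cdot Y\neq Y$ and $w^{-1}\cdot\lambda=(1,1,-1,1)\neq\lambda$; one only has $w\in W_\lambda\,W_Y$, via the element of $W_\lambda$ that swaps positions $2$ and $4$ while flipping both signs. The correct treatment, which is what the paper does, is: Lemma \ref{d2} disposes of the case where both $\lambda$ and $Y$ lie in $\R\,(1,\dots,1,-1)$; if only $\lambda$ does, one exchanges the roles of $\lambda$ and $Y$ using $\langle\lambda,w\cdot Y\rangle=\langle w^{-1}\cdot\lambda,Y\rangle$ and runs the main induction with Lemma \ref{d1} applied to the other variable (whose hypotheses constrain only that variable); a degenerate $Y$ is no obstacle to the inductive step.

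Two smaller points. First, $a(1,\dots,1,+1)$ is not covered by Lemma \ref{d2} at all, and your diagnosis of the exclusions in Lemma \ref{d1} is off: for $a(1,\dots,1,-1)$ with $a>0$ the top block has length $n-1<n$, so it is excluded not because $j_0<n$ fails but because the max principle is genuinely false for it (e.g.\ $\lambda=(1,1,-1)$, $Y=(1,0,0)$, $w\cdot Y=(0,0,-1)$ gives equality although $y_1$ never appears in $w\cdot Y$). Second, knowing $((1\,i)\,w\cdot Y)_1=y_1$ does not yet mean that $(1\,i)\,w$ fixes $e_1$ as a group element --- the value $y_1$ in that slot could come from another coordinate $y_k=\pm y_1$ --- so before restricting to $W(D_{n-1})$ you must also compose on the right with a suitable element of $W_Y$, exactly as the paper does with its permutation $\gamma\in W_Y$.
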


\begin{proof}
We proceed by induction on $n\geq 2$. Given Lemma \ref{d2}, if both $\lambda$ and $Y\in \R\,(1,\dots,1,-1)$ then there is nothing to prove. Given the symmetry of the problem, if $\lambda\in \R\,(1,\dots,1,-1)$ and $Y\not\in \R\,(1,\dots,1,-1)$, we can switch their roles
and suppose that $\lambda\notin \R\,(1,\dots,1,-1)$.

The base case $n=2$, in which, by Lemma \ref{d2}, we can assume that $\lambda=(\lambda_1,\lambda_2)\not\in \R(1,-1)$, is clear by inspection.

Assume the result true for $n-1$, $n\geq3$. As explained above, we may assume that $\lambda\not \in \R(1,\dots,1,-1)$. By Lemma \ref{d1}, the equality \eqref{main}
implies that ``$\max \lambda$ meets $\max Y$''. As in the case $A_n$, it follows that there exist permutations $\sigma\in W_\lambda$ and $\gamma\in W_Y$ such that
$(\sigma w\gamma \cdot Y)_1=y_1$. We consider $\tilde\lambda_1=(\lambda_2,\lambda_3,\dots,\lambda_n)$, $\tilde Y_1=(Y_2,\dots,Y_n)$ and $\tilde{w}_1=\sigma w\gamma |_{\tilde\a}$ where $\tilde\a=\{(x_2,\dots, x_n)| \ X=(x_i)_{i\ge 1}\in \a\}$ and we use the induction hypothesis or Lemma \ref{d2} depending on the situation.
\end{proof}

\subsection{Type $F_4$}
We use  Helgason \cite{Helgason2}
and some simple facts about the Weyl group $W=W(F_4)$ from \cite{Cahn}.
We consider the simple roots $\alpha_1={\bf e}_2-{\bf e}_3$, $\alpha_2={\bf e}_3-{\bf e}_4$, $\alpha_3={\bf e}_4$ and $\alpha_4=({\bf e}_1-{\bf e}_2-{\bf e}_3-{\bf e}_4)/2$ and the corresponding reflections $s_{\alpha_i}=s_i$. It follows that 
$\a^+=\{(x_1,x_2,x_3,x_4)\colon x_1>x_2+x_3+x_4,~x_2>x_3>x_4>0\}.$

Denote $\alpha_{12}={\bf e}_1-{\bf e}_2$ and $s_{12}=s_{\alpha_{12}}$. Note that $\alpha_{12}=\alpha_2+2\,\alpha_3+\alpha_4$ is a positive root. It is easy to check that 
\begin{equation}\label{a12}
s_3\,s_4\,s_{12}=s_2\,s_3\,s_4
\end{equation}
by inspection or using \cite[Table 1]{Cahn} on the basis $({\bf e}_i)$.

Let $X=(x_1,x_2,x_3,x_4)$ with $x_1\geq x_2\ge x_3 \ge x_4 \ge 0$, \emph{i.e.} $X\in \overline{\a^+(B_4)}$. We define 
$	W_X^{B_4}\subset W(B_4)$ as the subgroup generated by a subset of the symmetries $s\in 
\{s_{12}, s_1,s_2,s_3\}$ such that $s(X)=X$.

\begin{lemma}\label{weyls}
Let $\lambda\in \overline{\a^+(B_4)}$. Then	$W_\lambda^{B_4} \subset W_\lambda$.
\end{lemma}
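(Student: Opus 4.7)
The plan is direct: to prove the inclusion of subgroups, it suffices to show that every generator of $W_\lambda^{B_4}$ already lies in $W_\lambda$. By the definition given in the paragraph preceding the lemma, $W_\lambda^{B_4}$ is generated inside $W(B_4)$ by those $s\in\{s_{12},s_1,s_2,s_3\}$ that satisfy $s(\lambda)=\lambda$. Since $W_\lambda=\{w\in W(F_4):w\cdot\lambda=\lambda\}$ is a subgroup of $W(F_4)$, the only substantive point is to verify that each such generator belongs to $W(F_4)$; the condition $s(\lambda)=\lambda$ will then put it in $W_\lambda$.

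For $s_1,s_2,s_3$ the verification is immediate from the set-up: $\alpha_1=\mathbf{e}_2-\mathbf{e}_3$, $\alpha_2=\mathbf{e}_3-\mathbf{e}_4$ and $\alpha_3=\mathbf{e}_4$ are taken as simple roots of $F_4$, so by construction the corresponding reflections already lie in $W(F_4)$. The only generator demanding a remark is $s_{12}$. Here I would invoke the identity recorded just above the lemma, namely $\alpha_{12}=\mathbf{e}_1-\mathbf{e}_2=\alpha_2+2\alpha_3+\alpha_4$, which exhibits $\alpha_{12}$ as a nonnegative integer combination of the $F_4$-simple roots and hence as a positive root of $F_4$; consequently $s_{12}=s_{\alpha_{12}}\in W(F_4)$.

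Combining these two observations: every generator of $W_\lambda^{B_4}$ is simultaneously an element of $W(F_4)$ (by the previous paragraph) and a fixer of $\lambda$ (by the defining selection), hence lies in $W_\lambda$. The subgroup they generate inside $W(B_4)$ is then automatically contained in $W_\lambda$, which is the claim. There is essentially no obstacle to overcome: the content of the lemma is exactly the observation that the four $B_4$-type simple reflections used to present $W_\lambda^{B_4}$ on $\a$ are all realized, via the relation $\alpha_{12}=\alpha_2+2\alpha_3+\alpha_4$, as reflections attached to genuine $F_4$-roots.
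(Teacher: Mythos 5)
Your proof is correct and matches the paper's intent: the paper simply declares the lemma ``clear from the definition of $W_\lambda^{B_4}$,'' and your argument spells out exactly why --- each generator $s\in\{s_{12},s_1,s_2,s_3\}$ with $s(\lambda)=\lambda$ is a reflection in a genuine $F_4$-root (using $\alpha_{12}=\alpha_2+2\alpha_3+\alpha_4$ for $s_{12}$) and fixes $\lambda$, hence lies in $W_\lambda$.
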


\begin{proof}
Clear from the definition of $	W_\lambda^{B_4}$.
\end{proof}

Let $\alpha$, $\beta$, $\gamma$ denote the three sets of  roots of $F_4$ defined in \cite[p.{} 85]{Cahn}, with $\alpha=({\bf \pm e}_i)_{i=1}^4$.
Let $ \delta$, $\eta\in \{\alpha, \beta,\gamma\}$ and $W_{ \delta \eta}= \{w\in W\colon w(\delta)=\eta\} $. 
By \cite{Cahn}, we have
$
 W= W_{ \alpha \alpha} \cup W_{ \alpha \beta} \cup W_{ \alpha \gamma}
$.
In order to describe the action of $w\in W$, we define $w_0^\alpha= id, w_0^\beta=s_3 \,s_4$ and $w_0^\gamma=s_4$.
Then, by \cite[Table 1]{Cahn}, we have $w_0^\delta(\alpha)=\delta$ with $\delta\in \{\alpha, \beta,\gamma\}$.
	
The following result is proven in \cite{Cahn}.
Recall that $W(B_4)$ is the group of signed permutations of 4 elements.
\begin{lemma}\label{WF4}
Let $ \delta\in \{\alpha, \beta,\gamma\}$ and $w\in W_{ \alpha \delta}$. 
There exists $\sigma\in W(B_4)$ such that if
$Y= \sum_{i=1}^4\,y_{i}\,{\bf e}_i$, then
$
w\cdot Y=\sum_{i=1}^4\,y_{\sigma(i)}\,w_0^\delta({\bf e}_i).
$
Equivalently, $(w_0^\delta)^{-1}\,w$ is a signed permutation with respect to the basis $({\bf e}_i)$.
\end{lemma}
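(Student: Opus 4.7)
My plan is to exploit the coset structure of $W_{\alpha\delta}$. First I would observe that since $w_0^\delta(\alpha)=\delta$ by construction and $w(\alpha)=\delta$ by the definition of $W_{\alpha\delta}$, the element $\sigma:=(w_0^\delta)^{-1}\,w$ satisfies $\sigma(\alpha)=\alpha$, i.e. $\sigma$ preserves the set $\alpha=\{\pm{\bf e}_i : 1\le i\le 4\}$ setwise. In other words, $W_{\alpha\delta}=w_0^\delta\cdot W_{\alpha\alpha}$, so the lemma amounts to identifying $W_{\alpha\alpha}$ with $W(B_4)$.

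Next I would use linearity: since $\sigma$ is a linear map on $\a$ that permutes the finite set $\{\pm{\bf e}_i\}$, for each $i$ there exist $\varepsilon_i\in\{\pm1\}$ and $\pi(i)\in\{1,2,3,4\}$ with $\sigma({\bf e}_i)=\varepsilon_i\,{\bf e}_{\pi(i)}$. Linear independence of the ${\bf e}_i$ forces $\pi$ to be a permutation of $\{1,2,3,4\}$. This is precisely the description of a signed permutation, so $\sigma\in W(B_4)$, and conversely every signed permutation lies in $W(F_4)$ and stabilises $\alpha$.

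Finally, writing $w=w_0^\delta\,\sigma$ and expanding
\[
w\cdot Y \;=\; w_0^\delta\!\left(\sum_i y_i\,\varepsilon_i\,{\bf e}_{\pi(i)}\right) \;=\; \sum_j y_{\pi^{-1}(j)}\,\varepsilon_{\pi^{-1}(j)}\, w_0^\delta({\bf e}_j)
\]
yields the asserted formula, the signed-permutation notation $y_{\sigma(i)}$ absorbing both $\pi^{-1}$ and the signs $\varepsilon_{\pi^{-1}(i)}$.

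The main obstacle I expect is purely notational rather than mathematical: the statement uses the compact form $y_{\sigma(i)}$, so one has to be careful because $\sigma$ plays two roles (a Weyl-group element acting on $\a$ and a signed-permutation acting on indices with signs). The real structural content --- that $W_{\alpha\alpha}$ coincides with the group of signed permutations of $\{{\bf e}_1,\dots,{\bf e}_4\}$ --- is immediate from the linearity of Weyl group elements and the observation that $\alpha$ is the set of (unit-length) coordinate vectors.
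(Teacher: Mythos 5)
Your argument is correct, and it is worth noting that the paper does not actually prove this lemma at all: it simply states ``The following result is proven in \cite{Cahn}.'' Your proposal therefore supplies a genuine proof where the paper only gives a citation. The structure is sound: from $w(\alpha)=\delta$ (definition of $W_{\alpha\delta}$) and $w_0^\delta(\alpha)=\delta$ (which the paper takes from \cite[Table 1]{Cahn} and states just before the lemma) you get $\sigma=(w_0^\delta)^{-1}w\in W_{\alpha\alpha}$, i.e.\ $\sigma$ stabilises the set $\alpha=\{\pm{\bf e}_i\}_{i=1}^4$ setwise; linearity and invertibility then force $\sigma({\bf e}_i)=\varepsilon_i{\bf e}_{\pi(i)}$ with $\pi$ a permutation, which is exactly membership in $W(B_4)$, and the displayed formula follows by applying $w=w_0^\delta\sigma$ to $Y$ and reindexing. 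The only external input is the fact $w_0^\delta(\alpha)=\delta$, which you are entitled to use since the paper asserts it; everything else (the coset decomposition $W_{\alpha\delta}=w_0^\delta\,W_{\alpha\alpha}$, the identification $W_{\alpha\alpha}=W(B_4)$, which is also consistent with the order count $|W(F_4)|=1152=3\cdot 384=3\,|W(B_4)|$) is correctly derived. Your closing caveat about the double use of $\sigma$ (as an element of $W$ and as a signed index permutation) is exactly the right thing to flag, since the lemma's notation $y_{\sigma(i)}$ silently encodes both $\pi^{-1}$ and the signs.
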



\begin{prop}\label{F4}
Property Killing-max is verified in the case of the root system $F_4$.
\end{prop}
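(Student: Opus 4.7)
The plan is to combine Lemma~\ref{WF4}, which decomposes $W(F_4) = W(B_4) \sqcup w_0^\beta W(B_4) \sqcup w_0^\gamma W(B_4)$, with the Killing-max property for $B_4$ already proven in Proposition~\ref{KillBC}. Given $\lambda, Y \in \overline{\a^+}$ with $\langle \lambda, w\cdot Y\rangle = \langle \lambda, Y\rangle$, I write $w = w_0^\delta \sigma$ with $\sigma \in W(B_4)$, and note that the closure of the $F_4$-chamber lies in that of the $B_4$-chamber, so that $\lambda,Y$ also belong to the latter. When $\delta = \alpha$ (so $w_0^\alpha = \mathrm{id}$), one has $w = \sigma \in W(B_4)$, and Proposition~\ref{KillBC} gives $\sigma \in W_\lambda^{B_4}\,W_Y^{B_4}$, which by Lemma~\ref{weyls} lands inside $W_\lambda W_Y$; this disposes of the first case.

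For $\delta \in \{\beta, \gamma\}$ I introduce the auxiliary vector $\mu := (w_0^\delta)^{-1}\lambda$ and verify by direct coordinate computation that $\mu$ still lies in the closure of the $B_4$-chamber and that $\langle \mu, Y\rangle \leq \langle \lambda, Y\rangle$ for all $Y$ in the $F_4$-chamber, with the gap expressible as a nonnegative combination of quantities such as $\alpha_4(\lambda)\alpha_4(Y)$ and $\lambda_4 y_4$. Combined with the $B_4$-max principle $\langle \mu, \sigma Y\rangle \leq \langle \mu, Y\rangle$, the hypothesis $\langle \lambda, wY\rangle = \langle \mu, \sigma Y\rangle = \langle \lambda, Y\rangle$ forces both $\langle \mu, \sigma Y\rangle = \langle \mu, Y\rangle$ (call this (I)) and $\langle \mu, Y\rangle = \langle \lambda, Y\rangle$ (call this (II)). From (I) and Killing-max for $B_4$ I obtain $\sigma Y = v_0 Y$ for some $v_0 \in W(B_4)$ stabilizing $\mu$. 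A conjugation identity then does the main work: since $w_0^\delta \mu = \lambda$, the element $v_1 := w_0^\delta v_0 (w_0^\delta)^{-1}$ stabilizes $\lambda$, so $wY = w_0^\delta v_0 Y = v_1\,(w_0^\delta Y)$ with $v_1 \in W_\lambda$.

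It remains to show $w_0^\delta Y \in W_\lambda \cdot Y$, using (II). For $\delta = \gamma$ (so $w_0^\gamma = s_4$), (II) forces $\alpha_4(\lambda) = 0$ or $\alpha_4(Y) = 0$, i.e.\ either $s_4 \in W_\lambda$ or $s_4 Y = Y$, and in both cases $w_0^\gamma Y = s_4 Y \in W_\lambda\cdot Y$. For $\delta = \beta$ (so $w_0^\beta = s_3 s_4$), (II) splits into three subcases: $\lambda$ lies on both walls $\alpha_3=0$ and $\alpha_4=0$ (so $s_3,s_4 \in W_\lambda$ and $w_0^\beta \in W_\lambda$); or $\alpha_4(Y)=0$ with $\lambda_4=0$ (so $s_3 \in W_\lambda$, $s_4 \in W_Y$, giving $w_0^\beta Y = s_3(s_4 Y) = s_3 Y \in W_\lambda\cdot Y$); or $\alpha_4(Y)=0$ with $y_4=0$ (so $s_3, s_4 \in W_Y$ and $w_0^\beta Y = Y$). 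The main obstacle is this finite but detailed case analysis for $\delta = \beta$, though it becomes short once the framework above is in place. Assembling everything yields $wY = v_1(w_0^\delta Y) \in W_\lambda\cdot Y$, which is equivalent to $w \in W_\lambda W_Y$.
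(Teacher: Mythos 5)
Your proof is correct. It shares its skeleton with the paper's argument --- the coset decomposition $W=W_{\alpha\alpha}\cup W_{\alpha\beta}\cup W_{\alpha\gamma}$ from Lemma~\ref{WF4}, the passage to $\mu=(w_0^\delta)^{-1}\lambda$ (the paper's $\lambda'$), the verification that $\mu$ lies in $\overline{\a^+(B_4)}$ as in \eqref{2b}, and the sandwich of inequalities forcing both $\langle\mu,\sigma\cdot Y\rangle=\langle\mu,Y\rangle$ and $\langle\lambda,w_0^\delta\cdot Y\rangle=\langle\lambda,Y\rangle$ --- but it resolves the decisive containment step by a genuinely different device. The paper, having obtained $w\in w_0^\delta\, W_{\lambda'}^{B_4}\,W_Y^{B_4}$, must push $w_0^\delta$ past the generators of $W_{\lambda'}^{B_4}$ by hand; this costs the explicit relation \eqref{a12}, a preliminary reduction by symmetry to $\alpha_4\in\Sigma_\lambda$, and the enumeration of the subcases (A), (B1)--(B4) according to $\Sigma_\lambda$. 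Your conjugation identity $v_1=w_0^\delta\, v_0\,(w_0^\delta)^{-1}\in W_\lambda$ --- valid because $v_0$ fixes $\mu$, $w_0^\delta\cdot\mu=\lambda$, and the signed permutations form a subgroup of $W(F_4)$ --- bypasses all of that: it reduces the problem to showing $w_0^\delta\cdot Y\in W_\lambda\cdot Y$, and condition (II) together with \eqref{step1}, a sum of products of nonnegative quantities that must all vanish, turns that into the short case check you describe (your three subcases for $\delta=\beta$ do exhaust the possibilities). Your route is shorter and less dependent on the particular presentation of $W(F_4)$; the paper's yields explicit factorizations of $w$ but at the cost of a longer combinatorial analysis. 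Both arguments are complete.
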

\begin{proof}
Suppose that $\lambda=\sum_{i=1}^4\,\lambda_i\,{\bf e}_i$, $Y=\sum_{i=1}^4\,y_i\,{\bf e}_i\in\overline{\a^+(F_4)}$ are singular. Our objective is to solve the equation \eqref{main}. We will assume from now on that \eqref{main} holds.
We consider the three cases $w\in W_{ \alpha \delta}$, where $ \delta=\alpha,\beta,\gamma$.

If $w\in W_{ \alpha \alpha }$, we note that
$
{\a^+(F_4)}\subset {\a^+(B_4)}.
$
Lemma \ref{WF4}, Proposition \ref{KillBC} and Lemma \ref{weyls} imply that $w\in W_\lambda^{B_4}\,W_Y^{B_4}\subset W_\lambda\,W_Y$.
 
In the case $w\in W_{\alpha \beta}$, we use $w_0=w_0^\beta= s_3 \,s_4$.
If $\lambda=\sum_{i=1}^4\,\lambda_i\,{\bf e}_i\in\overline{\a^+(F_4)}$ then $\lambda'=w_0^{-1}\cdot\lambda
=\sum_{i=1}^4\lambda'_i \,{\bf e}_i$ with 
$\lambda'_1\ge \lambda'_2\ge \lambda'_3 \ge \lambda'_4 \ge 0$ since
\begin{equation}\label{2b}
\lambda'= 
\frac{1}{2}[ (\lambda_1+\lambda_2+\lambda_3-\lambda_4)\,{\bf e}_1+ 
 (\lambda_1+\lambda_2-\lambda_3+\lambda_4)\,{\bf e_2}
+
(\lambda_1-\lambda_2+\lambda_3+\lambda_4)\,{\bf e}_3+
(\lambda_1-\lambda_2-\lambda_3-\lambda_4)\,{\bf e}_4].
\end{equation}
Using \eqref{main}, Lemma \ref{WF4} and the standard property of the Killing form for $B_4$, we have
\begin{align*}
\langle \lambda, Y\rangle=\langle \lambda, w\cdot Y\rangle=\langle w_0^{-1}\cdot\lambda, w_0^{-1}\,w\cdot Y\rangle
\leq \langle w_0^{-1}\cdot\lambda, Y\rangle = \langle \lambda, w_0\cdot Y\rangle \leq \langle\lambda,Y\rangle.
\end{align*}
This means that $\langle \lambda', w_0^{-1}\,w\cdot Y\rangle
= \langle\lambda', Y\rangle $ and therefore that $w\in w_0\,W_{\lambda'}^{B_4}\,W_{Y}^{B_4} $ by Proposition \ref{KillBC}.
 
We reason similarly if $w\in W_{ \alpha \gamma }$, with $w_0=w_0^\gamma=s_4$ and 
\begin{align*}
\lambda'&=s_4(\lambda)
=\frac{1}{2}[(\lambda_1+\lambda_2+\lambda_3+\lambda_4)\,{\bf e}_1+ (
\lambda_1+\lambda_2-\lambda_3-\lambda_4)\,{\bf e}_2 +(\lambda_1-\lambda_2+\lambda_3-\lambda_4)\,{\bf e}_3
\\&\qquad
+(\lambda_1-\lambda_2-\lambda_3+\lambda_4)\,{\bf e}_4].
\end{align*}
It therefore follows that $w\in w_0\,W_{\lambda'}^{B_4}\,W_{Y}^{B_4} $ with $\lambda'=w_0^{-1}\cdot\lambda$.

It is important to note that a feature of both cases $w\in W_{ \alpha \beta }$ and $w\in W_{ \alpha \gamma }$ implies that the respective $w_0$ satisfy 
$\langle\lambda,Y\rangle=\langle\lambda,w_0\cdot Y\rangle$.
It follows that these cases do not occur if $\alpha_4\not \in \Sigma_\lambda\cup \Sigma_Y$. Indeed,
using the formula $s_i(X)=X-2\,\frac{\alpha_i(X) }{\|\alpha_i\|^2}\alpha_i$, we have for $w_0^\beta=s_3\,s_4$ and for $w_0^\gamma=s_4$,
\begin{align}
\langle\lambda,Y\rangle- 	\langle\lambda,s_3\,s_4\,Y\rangle&=2\,\alpha_4(\lambda)\,\alpha_4(Y) +2 \,\alpha_3(\lambda)\,\alpha_3(Y) + 2\,\alpha_3(\lambda)\,\alpha_4(Y)
\label{step1}\\
\langle\lambda,Y\rangle- 	\langle\lambda,s_4\,Y\rangle&=2\,\alpha_4(\lambda)\,\alpha_4(Y)
\nonumber
\end{align}
Thus $\langle\lambda,Y\rangle \not= 	\langle\lambda,w_0\cdot Y\rangle$ if
$\alpha_4\not \in \Sigma_\lambda\cup \Sigma_Y$ and $w\in W_{\alpha\beta}$ or $w\in W_{\alpha\gamma}$.
We showed above that in  the case $w\in W_{\alpha\alpha}$, formula \eqref{main} implies that 	$w\in W_\lambda\,W_Y$. The Proposition is thus proven for $\alpha_4\not \in \Sigma_\lambda\cup \Sigma_Y$.

It remains to treat	the cases $\alpha_4 \in \Sigma_\lambda$ or $\alpha_4 \in \Sigma_Y$. 
By symmetry of the problem \eqref{main}, it is sufficient to treat the case $\alpha_4 \in \Sigma_\lambda$, for any singular $Y$. 
We assume henceforth that $\alpha_4 \in \Sigma_\lambda$.

We showed above that in  the case $w\in W_{\alpha\alpha}$, formula \eqref{main} implies that 	$w\in W_\lambda\,W_Y$.

If $w\in W_{\alpha\gamma}$, we have $w_0=w_0^\gamma=s_4$ and therefore $\lambda'=s_4\cdot \lambda=\lambda$ since $\alpha_4 \in \Sigma_\lambda$. Since $s_4\in W_\lambda$, we have
$ w\in s_4 W_{\lambda'}^{B_4}W_Y^{B_4}= s_4 W_\lambda^{B_4}\,W_Y^{B_4} \subset W_\lambda\,W_Y$.

Suppose that $w\in W_{\alpha\beta}$ and recall that $w_0=w_0^\beta=s_3\,s_4$. By \eqref{step1}, we have the following two cases:

(A) $\alpha_3(\lambda)=0$ \ \ \ or \ \ \  (B) $\alpha_3(\lambda)\not=0$, $\alpha_3(Y)=0$ and $\alpha_4(Y)=0$.

In the case (A), we have $w_0^{-1}\cdot \lambda=\lambda$ \emph{i.e.} $\lambda'=\lambda$ and $s_3\,s_4\in W_\lambda$. Therefore, we have
$w\in s_3\,s_4 W_{\lambda'}^{B_4}\,W_Y^{B_4}= s_3\,s_4 \,W_\lambda^{B_4}\,W_Y^{B_4} \subset W_\lambda\,W_Y$.

In the case (B), we compute using \eqref{2b}, $\lambda'=(\lambda_2+\lambda_3, \lambda_2+\lambda_4,\lambda_3+\lambda_4,0)$,
where $\lambda_4>0$. 
We will be using $s_3$ defined by $s_3(x_1,x_2,x_3,x_4)=(x_1,x_2,x_3,-x_4)$. Note that $s_3\cdot Y=Y$ since $y_4=\alpha_3(Y)=0$,
and that $s_3$ commutes with $s_1$ and $s_{12}$. We consider the following mutually exclusive cases (B1)--(B4):

(B1) $\Sigma_\lambda=\{ \alpha_4 \}$: in that case, $W_{\lambda'}^{B_4}=\{id,s_3\}$ and $w\in s_3\,s_4 W_{\lambda'}^{B_4}W_Y^{B_4}\subset W_Y$.

(B2) $\Sigma_\lambda=\{ \alpha_1, \alpha_4 \}$, \emph{i.e.} $\lambda_2=\lambda_3>\lambda_4>0$: in that case, 
$W_{\lambda'}^{B_4}=\{id, s_1\} \, \{id, s_3\}$.

Since $s_1$ commutes with $s_3$ and $s_4$, we have 
$w\in \{id, s_1 \}\,s_3\,s_4\,\{id, s_3\} \,W_Y^{B_4}	 \subset W_\lambda\,W_Y$.

(B3) $\Sigma_\lambda=\{ \alpha_2, \alpha_4 \}$, \emph{i.e.} $\lambda_3=\lambda_4>0$: in that case, 
$W_{\lambda'}^{B_4}=\{id, s_{12} \}\, \{id, s_3\}$. 
Using \eqref{a12}, we find that $w\in \{id, s_{2} \} s_3\,s_4 \{id, s_3\}\,W_Y^{B_4}	 \subset W_\lambda\,W_Y$ .

(B4) $\Sigma_\lambda=\{ \alpha_1, \alpha_2, \alpha_4 \}$, \emph{i.e.} $\lambda_2=\lambda_3=\lambda_4>0$: in that case, 
$W_{\lambda'}^{B_4}=\{id, s_{12}, s_1, s_{12}s_1,s_1 s_{12}, s_1 s_{12} s_1 \}\, \{id, s_3\}$. 

Similarly as in (B2) and (B3), we verify that $s_3\,s_4 W_{\lambda'}^{B_4} \subset W_\lambda\,W_Y$. For example,
$s_3\,s_4(s_1 s_{12} s_1)=s_1\,s_3\,s_4 s_{12} s_1= s_1 \,s_2\,s_3\,s_4 \,s_1= s_1 \,s_2 \,s_1\,s_3\,s_4 \in W_\lambda\,W_Y$.
Thus \eqref{main} implies that $w\in s_3\,s_4\, W_{\lambda'}^{B_4} \,W_{Y}^{B_4}\subset W_\lambda\,W_Y$. 	
\end{proof}

\subsection{Type $G_2$} 

The Cartan space is given by $\a(G_2)=\{ H_{A,B}=(A,B,A-B,0,B-A,-B,-A)\,|\ A,B\in\R\}$ and two simple positive roots are $\alpha(H_{A,B})= A-B$ and 
$\beta(H_{A,B})= B-(A-B)=2B-A$. Consequently, the positive Weyl chamber is given by $\a^+=\{ H_{A,B}\,|\ A>B>A-B>0\}$.

Note that it is sufficient to work on the space $\a=\{h_{A,B}=(A,B,A-B)\colon A,B\in\R\}$ which is isomorphic to $\a(G_2)$. We will work on this space $\a$ from now on. Observe also that the Weyl group $W$ is generated by $s_\alpha$ which interchanges the first two entries and changes the sign of the third and $s_\beta=(2,3)$, so it is included in $S_3\rtimes \{1,-1\}^3$. This inclusion is strict: the group $W$ has 12 elements and $S_3\rtimes \{1,-1\}^3$ has $6\times 2^3=48$ elements.

\begin{prop}\label{G2}
Property Killing-max is verified in the case of the root system $G_2$.
\end{prop}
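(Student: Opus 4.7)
My plan is to exploit the fact that $G_2$ has rank $2$ and identify $\a$ isometrically with $\R^2$ so that the positive Weyl chamber $\a^+$ becomes the open wedge $\{r\,e^{i\theta}:r>0,\ 0<\theta<\pi/6\}$. Under this identification, $W$ is the dihedral group $I_2(6)$ of order $12$, the two walls of $\overline{\a^+}$ are the rays at angles $0$ and $\pi/6$ (fixed respectively by the reflections $\sigma_\alpha$ and $\sigma_\beta$), and a singular element of $\overline{\a^+}$ is either $0$ or lies on exactly one of these two rays.

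The degenerate case $\lambda=0$ or $Y=0$ is immediate: one of the stabilizers equals $W$, so $W_\lambda W_Y=W$ and the conclusion is trivial. Assume then that $\lambda,Y\neq 0$ are both singular, and write $\lambda=|\lambda|\,e^{i\theta_\lambda}$, $Y=|Y|\,e^{i\theta_Y}$ with $\theta_\lambda,\theta_Y\in\{0,\pi/6\}$. For $w\in W$, letting $\phi_w$ denote the argument of $w\cdot Y$, the hypothesis $\langle\lambda,w\cdot Y\rangle=\langle\lambda,Y\rangle$ translates into $\cos(\theta_\lambda-\phi_w)=\cos(\theta_\lambda-\theta_Y)$, which forces
\begin{equation*}
\phi_w\equiv\theta_Y\pmod{2\pi}\qquad\text{or}\qquad\phi_w\equiv 2\theta_\lambda-\theta_Y\pmod{2\pi}.
\end{equation*}

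The first alternative says $w\cdot Y=Y$, i.e.\ $w\in W_Y$. For the second I would inspect the orbit $W\cdot Y$ (a regular hexagon with vertices at the six angles $\theta_Y+k\pi/3$, $k=0,\dots,5$, since the stabilizer of a wall vector has order $2$) and check that $2\theta_\lambda-\theta_Y$ is the argument of exactly one orbit element, namely the reflection of $Y$ through the wall containing $\lambda$: this is $\sigma_\alpha\cdot Y$ when $\theta_\lambda=0$ and $\sigma_\beta\cdot Y$ when $\theta_\lambda=\pi/6$. Since the relevant reflection fixes $\lambda$, it follows that $w\in\sigma_\alpha W_Y\cup\sigma_\beta W_Y\subset W_\lambda W_Y$.

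I do not foresee any serious obstacle. The one point requiring a moment of care is verifying that the second alternative actually lies in the orbit of $Y$; this is automatic because $\theta_\lambda$ and $\theta_Y$ are both multiples of $\pi/6$, so $2\theta_\lambda-\theta_Y$ and $\theta_Y$ differ by a multiple of $\pi/3$, while the orbit of the singular vector $Y$ consists of exactly the six points whose arguments are $\theta_Y$ plus multiples of $\pi/3$. All four sign combinations $(\theta_\lambda,\theta_Y)\in\{0,\pi/6\}^2$ can then be dispatched by reading off the hexagonal orbit picture, and this completes the verification of the Killing-max property for $G_2$.
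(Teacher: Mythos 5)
Your proof is correct, and it takes a genuinely different route from the paper's. The paper works in the explicit realization $\a=\{(A,B,A-B)\colon A,B\in\R\}$, views $W(G_2)$ as a subgroup of the signed permutations $S_3\rtimes\{\pm1\}^3$ generated by $s_\alpha$ and $s_\beta=(2,3)$, and then runs a three-case combinatorial analysis (according to which of $\alpha,\beta$ annihilates $\lambda$ and which annihilates $Y$), arguing in each case that no entry of $w\cdot Y$ can move or flip sign without strictly decreasing $\langle\lambda,w\cdot Y\rangle$. You instead exploit only the dihedral structure: identifying $(\a,\langle\cdot,\cdot\rangle)$ isometrically with $\R^2$, the condition $\langle\lambda,w\cdot Y\rangle=\langle\lambda,Y\rangle$ becomes the cosine identity $\cos(\theta_\lambda-\phi_w)=\cos(\theta_\lambda-\theta_Y)$, whose only two solutions on the hexagonal orbit of $Y$ are $Y$ itself and its mirror image in the wall through $\lambda$; the first gives $w\in W_Y$ and the second gives $w\in\sigma W_Y\subset W_\lambda W_Y$ with $\sigma$ the reflection fixing $\lambda$. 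The one point you should make explicit is that each of the six orbit points sits at a distinct argument (they are separated by $\pi/3$), so that equality of arguments really does pin down the orbit element; this is exactly the "moment of care" you flag and it checks out. What your approach buys is uniformity: nothing in the argument is specific to $G_2$, so it proves Killing-max verbatim for every dihedral Weyl group $I_2(m)$, i.e.\ for all rank-two root systems at once, whereas the paper's argument is tied to the concrete coordinate model it uses throughout the appendix.
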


\begin{proof}
Given that the root system is of rank 2, we only need to consider three cases of singular $\lambda$ and $Y$:
\begin{itemize}
\item[(C1)] $\alpha(\lambda)=\alpha(Y)=0$:
We have $\lambda=(l,l,0)$, $Y=(y,y,0)$, $l$, $y>0$ and $\langle \lambda, w\cdot Y \rangle = \langle \lambda, Y \rangle=2\,l\,y$. It follows that 0 in $Y$ cannot change position in $w\cdot Y$ and no $y$ can become $-y$, so $w\cdot Y=Y$ and $w\in W_Y$.
\item[(C2)] $\alpha(\lambda)=\beta(Y)=0$: We have $\lambda=(l,l,0)$, $Y=(2\,y,y,y)$, $l$, $y>0$ and $\langle \lambda, w\cdot Y \rangle = \langle \lambda, Y \rangle=3\,l\,y$. Then no minus sign is possible in the first two terms of $w\cdot Y$ and
$2\,y$ cannot go to the third position. Consequently, using the fact that $ (h_{A,B})_3= (h_{A,B})_1- (h_{A,B})_2$, we find that $w\cdot Y= (2\,y,y,y)=Y$ (so  $w\in W_Y$) or $w\cdot Y= (y,2\,y,-y)=s_\alpha Y$, which implies that $ s_\alpha w \in W_Y$ and $w\in  s_\alpha  W_Y \subset W_\lambda W_Y$.
\item[(C3)] $\beta(\lambda)=\beta(Y)=0$: We have $\lambda=(2\,l,l,l)$, $Y=(2\,y,y,y)$, $l$, $y>0$. Then $2\,y$ must remain in the first position
in $w\cdot Y$ and no sign change can happen, thus $w\cdot Y=Y$ and $w\in W_Y$.
\end{itemize}
\end{proof}

\end{document}